\let\old@tocline\@tocline
\let\section@tocline\@tocline
\newcommand{\subsection@dotsep}{4.5}
\newcommand{\subsubsection@dotsep}{4.5}
	\leaders\hbox{$\m@th
		\mkern \subsection@dotsep mu\hbox{.}\mkern \subsection@dotsep mu$}\hfill
\let\subsection@tocline\@tocline
\let\@tocline\old@tocline
	\leaders\hbox{$\m@th
		\mkern \subsubsection@dotsep mu\hbox{.}\mkern \subsubsection@dotsep mu$}\hfill
\let\subsubsection@tocline\@tocline
\let\@tocline\old@tocline
\let\old@l@subsection\l@subsection
\let\old@l@subsubsection\l@subsubsection
\def\@tocwriteb#1#2#3{%
	\begingroup
	\@xp\def\csname #2@tocline\endcsname##1##2##3##4##5##6{%
		\ifnum##1>\c@tocdepth
		\else \sbox\z@{##5\let\indentlabel\@tochangmeasure##6}\fi}%
	\csname l@#2\endcsname{#1{\csname#2name\endcsname}{\@secnumber}{}}%
	\endgroup
	\addcontentsline{toc}{#2}%
	{\protect#1{\csname#2name\endcsname}{\@secnumber}{#3}}}%
\newlength{\@tocsectionindent}
\newlength{\@tocsubsectionindent}
\newlength{\@tocsubsubsectionindent}
\newlength{\@tocsectionnumwidth}
\newlength{\@tocsubsectionnumwidth}
\newlength{\@tocsubsubsectionnumwidth}
\newcommand{\settocsectionnumwidth}[1]{\setlength{\@tocsectionnumwidth}{#1}}
\newcommand{\settocsubsectionnumwidth}[1]{\setlength{\@tocsubsectionnumwidth}{#1}}
\newcommand{\settocsubsubsectionnumwidth}[1]{\setlength{\@tocsubsubsectionnumwidth}{#1}}
\newcommand{\settocsectionindent}[1]{\setlength{\@tocsectionindent}{#1}}
\newcommand{\settocsubsectionindent}[1]{\setlength{\@tocsubsectionindent}{#1}}
\newcommand{\settocsubsubsectionindent}[1]{\setlength{\@tocsubsubsectionindent}{#1}}
\renewcommand{\l@section}{\section@tocline{1}{\@tocsectionvskip}{\@tocsectionindent}{}{\@tocsectionformat}}%
\renewcommand{\l@subsection}{\subsection@tocline{2}{\@tocsubsectionvskip}{\@tocsubsectionindent}{}{\@tocsubsectionformat}}%
\renewcommand{\l@subsubsection}{\subsubsection@tocline{3}{\@tocsubsubsectionvskip}{\@tocsubsubsectionindent}{}{\@tocsubsubsectionformat}}%
\newcommand{\@tocsectionformat}{}
\newcommand{\@tocsubsectionformat}{}
\newcommand{\@tocsubsubsectionformat}{}
\def\csname toc@1format\endcsname{\@tocsectionformat}
\def\csname toc@2format\endcsname{\@tocsubsectionformat}
\def\csname toc@3format\endcsname{\@tocsubsubsectionformat}
\newcommand{\settocsectionformat}[1]{\renewcommand{\@tocsectionformat}{#1}}
\newcommand{\settocsubsectionformat}[1]{\renewcommand{\@tocsubsectionformat}{#1}}
\newcommand{\settocsubsubsectionformat}[1]{\renewcommand{\@tocsubsubsectionformat}{#1}}
\newlength{\@tocsectionvskip}
\newcommand{\settocsectionvskip}[1]{\setlength{\@tocsectionvskip}{#1}}
\newlength{\@tocsubsectionvskip}
\newcommand{\settocsubsectionvskip}[1]{\setlength{\@tocsubsectionvskip}{#1}}
\newlength{\@tocsubsubsectionvskip}
\newcommand{\settocsubsubsectionvskip}[1]{\setlength{\@tocsubsubsectionvskip}{#1}}
\patchcmd{\tocsection}{\indentlabel}{\makebox[\@tocsectionnumwidth][l]}{}{}
\patchcmd{\tocsubsection}{\indentlabel}{\makebox[\@tocsubsectionnumwidth][l]}{}{}
\patchcmd{\tocsubsubsection}{\indentlabel}{\makebox[\@tocsubsubsectionnumwidth][l]}{}{}
\newcommand{\@sectypepnumformat}{}
\renewcommand{\contentsline}[1]{%
	\expandafter\let\expandafter\@sectypepnumformat\csname @toc#1pnumformat\endcsname%
	\csname l@#1\endcsname}
\newcommand{\@tocsectionpnumformat}{}
\newcommand{\@tocsubsectionpnumformat}{}
\newcommand{\@tocsubsubsectionpnumformat}{}
\newcommand{\setsectionpnumformat}[1]{\renewcommand{\@tocsectionpnumformat}{#1}}
\newcommand{\setsubsectionpnumformat}[1]{\renewcommand{\@tocsubsectionpnumformat}{#1}}
\newcommand{\setsubsubsectionpnumformat}[1]{\renewcommand{\@tocsubsubsectionpnumformat}{#1}}
\renewcommand{\@tocpagenum}[1]{%
	\hfill {\mdseries\@sectypepnumformat #1}}
\let\oldappendix\appendix
\renewcommand{\appendix}{%
	\leavevmode\oldappendix%
	\addtocontents{toc}{%
		\protect\settowidth{\protect\@tocsectionnumwidth}{\protect\@tocsectionformat\sectionname\space}%
		\protect\addtolength{\protect\@tocsectionnumwidth}{2em}}%
}
\let\oldtableofcontents\tableofcontents
\renewcommand{\tableofcontents}{%
	\vspace*{-\linespacing}
	\oldtableofcontents}
\numberwithin{equation}{section}
\newtheorem{thm}{Theorem}[section]
\newtheorem{pp}[thm]{Proposition}
\newtheorem{lm}[thm]{Lemma}
\newtheorem{df}[thm]{Definition}
\newtheorem{Rmk}[thm]{Remark}
\newcommand{\R}{\mathbb{R}}
\newcommand{\rd}{\mathrm{d}}
\newcommand{\cQ}{\mathcal{Q}}
\newcommand{\cR}{\mathcal{R}}
\newcommand{\cI}{\mathcal{I}}
\newcommand{\cJ}{\mathcal{J}}
\newcommand{\mS}{\mathbb{S}}
\newcommand{\T}{\mathbb{T}}
\newcommand{\N}{\mathbb{N}}
\newcommand{\Z}{\mathbb{Z}}
\newcommand{\Id}{\operatorname{Id}}
\newcommand{\Div}{\operatorname{div}}
\newcommand{\tr}{\operatorname{tr}}
\newcommand{\supp}{\operatorname{supp}}
\newcommand{\tu}{\tilde{u}}
\newcommand{\PL}{P_{\leqslant\ell^{-1}}}
\newcommand{\UL}{U_{\leqslant\ell^{-1}}}
\newcommand{\UG}{U_{>\ell^{-1}}}
\newcommand{\PLq}{P_{\leqslant\ell_{q,i}^{-1}}}
\newcommand{\PGq}{P_{>\ell_{q,i}^{-1}}}
\newcommand{\ULq}{U_{\leqslant\ell_{q,i}^{-1}}}
\newcommand{\UGq}{U_{>\ell_{q,i}^{-1}}}
\newcommand{\PLo}{P_{\leqslant\ell_{q,0}^{-1}}}
\newcommand{\ULo}{U_{\leqslant\ell_{q,0}^{-1}}}
\newcommand{\nb}{\nabla}
\newcommand{\pa}{\partial}
\keywords{Convex integration, non-uniqueness, quasi-linear wave equations,
	linearly degenerate,  weak solution}
\subjclass[2020]{35A02,\ 35D30,\ 35L05,\ 35L15,\ 35L72,}
\begin{document}
	\title[] {Non-uniqueness for the nonlinear dynamical Lam\'e system}
	
	\author{Shunkai Mao}
	\address{School of Mathematical Sciences, Fudan University, China.}
	\email[Shunkai Mao]{21110180056@m.fudan.edu.cn}
	
	\author{Peng Qu}
	\address{School of Mathematical Sciences $\&$ Shanghai Key Laboratory for Contemporary Applied Mathematics, Fudan University, China.}
	\email[Peng Qu]{pqu@fudan.edu.cn}
	\thanks{}
	
	\begin{abstract}
	We consider the Cauchy problem for the nonlinear dynamical Lam\'e system with double wave speeds in a $d$-dimensional $(d=2,3)$ periodic domain. Moreover, the equations can be transformed into a linearly degenerate hyperbolic system.  We could construct infinitely many  continuous  solutions in $C^{1,\alpha}$ emanating from the same small initial data for $\alpha<\frac{1}{60}$.  The proof relies on the convex integration scheme. We construct a new class of building blocks with compression structure by using the double wave speeds characteristic of the equations.  
	\end{abstract}
	\maketitle
	
	{
		\tableofcontents
	}
\section{Introduction}
In this paper, we consider the Cauchy problem for the nonlinear dynamical Lam\'e system on $[0,T]\times\T^d,$ $T>0,$ $\T=[-\pi, \pi],$ $d=2,3,$ 
\begin{equation}\label{system}
	\left\{
	\begin{aligned}
		&\partial_{tt}u-\mu\Delta u-(\lambda+\mu)\nabla\Div u+\Div(\tr(\nabla u(\nabla u)^{\top})\Id-(\nabla u)^\top\nabla u)=0,&& (t,x)\in (0,T)\times\T^d,  \\
		&u(0,x)=u_0(x),\quad \partial_tu(0,x)=u_1(x),&& x\in \T^d,
	\end{aligned}
	\right.
\end{equation}
where $u:[0,T]\times\T^d\rightarrow \R^d$ is a vector function, $\lambda$ and $\mu$ are constants satisfying $\mu>0, \lambda+\mu>0$. \par 
Similar to \cite{Boc09,CMT24}, we introduce next the definition of a weak solution of this system.
\begin{df}[Weak Solution]\label{def of weak solution}
	By a weak solution of \eqref{system}, defined on some interval $(0,T)$, we mean a function $u\in C([0,T]; H^2(\T^d))\bigcap C^1([0,T]; H^1(\T^d))$, with $\pa_tu\in C([0,T]; H^1(\T^d))$, $\pa_{tt}u\in L^2(0,T; L^2(\T^d))$,   $u(0,x)=u_0(x)\in H^2(\T^d)$, and $\pa_tu(0,x)=u_1(x)\in H^1(\T^d)$ such that
	\begin{equation}\label{eq of weak solution}
	\begin{aligned} \int_0^T\int_{\T^d}(-\partial_tu\cdot\partial_{t}\eta+\mu\nabla u:\nabla\eta+(\lambda+\mu)\Div u\Div\eta-(\tr(\nabla u(\nabla u)^{\top})\Id-(\nabla u)^\top\nabla u):\nabla\eta)\rd x\rd t&\\
	=\int_{\T^d}u_1(x)\cdot\eta(0,x)\rd x&,
	\end{aligned}
	\end{equation}
for any smooth function $\eta\in C_0^{\infty}([0,T)\times\T^d;\R^d)$.
\end{df}
Moreover, we consider weak solutions which have H\"{o}lder derivatives in space, for instance,
\begin{equation}
	|\partial_t u(t,x)-\partial_tu(t,y)|+|\nabla u(t,x)-\nabla u(t,y)|\leqslant C|x-y|^\beta,\qquad \forall x,y\in\T^3,\forall t\in[0,T],
\end{equation}
for some constant $C$ which is independent of $t$. Here $\beta\in(0,1)$ is the H\"{o}lder index.  In this paper, we aim to demonstrate the non-uniqueness of weak solutions that belong to $C^{1,\alpha}([0,T] \times \T^d)$ for some $\alpha>0$, for the nonlinear dynamical Lam\'e system as described in \eqref{system}. \par
\begin{Rmk}
Similar to the study \cite{BDSV19} on the non-uniqueness of weak solutions in $C^{\frac{1}{3}-}([0,T] \times \T^d)$ of the Euler equations, here we also consider weak solutions in H\"{o}lder  spaces. 
\end{Rmk} 
As for the general wave equations in $n$ dimensions, 
\begin{equation}\label{nonlinear wave equation}
	\left\{
	\begin{aligned}
		&\pa_{tt}u-\Delta u=F(u,\nb u,\nb^2u), \quad t>0,x\in \R^n, \\
		&u(0,x)=\varepsilon\psi(x)\in C_c^\infty(\R^n),\quad \pa_tu(0,x)=\varepsilon\eta(x)\in C_c^\infty(\R^n),
	\end{aligned}
	\right.
\end{equation}
where $\varepsilon$ is a small parameter, $\tilde{\Lambda}=(u,\nb u,\nb^2u)$, and $F(\tilde{\Lambda})=O(|\tilde{\Lambda}|^{1+\alpha})$ for $\alpha\in \Z_+$. In \cite{LZ17}, Li--Zhou gave the lower bound estimate of the life-span $\tilde{T}(\varepsilon)$ of the classical solution to $\eqref{nonlinear wave equation}$, \par
\begin{equation}\label{life span}
	\tilde{T}(\varepsilon)\geqslant\left\{\begin{aligned} &be(\varepsilon)(\text{in}\ \text{\cite{LZ94}}),&&n=2,\alpha=1,\\
		&b\varepsilon^{-2}(\text{in}\ \text{\cite{LY89,LY91,Lin90}}), &&n=3,\alpha=1,
	\end{aligned}
	\right.	
\end{equation}
where $e(\varepsilon)$ is defined by $\varepsilon^2e^2(\varepsilon)\ln(1+e(\varepsilon))=1$. Moreover, if the nonlinear term $F$ doesn't depend on $u$ explicitly: $F=F(\nb u,\nb^2u)$, the result can be improved into
\begin{equation}\label{life span imporved}
	\tilde{T}(\varepsilon)\geqslant\left\{\begin{aligned} &b\varepsilon^{-2}(\text{in}\ \text{\cite{LZ94}}),&&n=2,\alpha=1,\\
		&e^{a\varepsilon^{-1}}(\text{in}\ \text{\cite{LZ92,Lin90}}), &&n=3,\alpha=1.
	\end{aligned}
	\right.	
\end{equation}

The nonlinear term in \eqref{system} can be shown to satisfy the null condition, a concept first introduced by Klainerman. In the three-dimensional case, Christodoulou \cite{Chr86} and Klainerman \cite{Kla86} proved that when $ F $ satisfies the null condition, the Cauchy problem \eqref{nonlinear wave equation} admits a unique global classical solution for sufficiently small $\varepsilon$. For the two-dimensional case, Alinhac \cite{Ali01} demonstrated almost global existence for quasi-linear wave equations under the null condition. Later, Zha \cite{Zha19} extended this work to a broader class of systems using a unified methodology. 

On the other hand, for nonlinear wave equations with small initial data that do not satisfy the null condition, extensive studies have focused on finite-time blowup phenomena. Notable contributions in this area include works such as \cite{Ali99,Ali99B,Ali00,Ali01B,John81,Sid84,Sid85}.

It can be observed that \eqref{system} describes a system of quasi-linear wave equations with double wave speeds, a characteristic that closely resembles elastodynamics. Significant progress has been made in the study of elastodynamics, as highlighted in works such as \cite{John88,Lei15,Lei16,Sid96,Sid00,ST05,Wang17}. John \cite{John88} demonstrated the almost global existence of elastic waves with finite amplitude originating from small initial perturbations. Later, Sideris \cite{Sid96,Sid00} proved that the null condition guarantees the global existence of nonlinear elastic waves in three spatial dimensions. Additionally, Lei \cite{Lei16} introduced the concept of the strong null condition and established that systems of incompressible isotropic Hookean elastodynamics in two dimensions admit a unique global classical solution for sufficiently small initial displacements.

Significant progress has been made in the study of low-regularity solutions to systems of quasilinear wave equations, as reflected in works such as \cite{ACY23,Lei08,Lin98,ST05A,Wang17,ZhaH20,Zhang24}. It has been established in \cite{Lin98,ST05A,Wang17} that the Cauchy problem for quasilinear wave equations is locally well-posed in $ H^s(\mathbb{R}^3) \times H^{s-1}(\mathbb{R}^3) $ for $ s > 3 $, which is generally considered sharp.  Zha--Hidano studied the Cauchy problem for 3D quasilinear wave systems satisfying the null condition with low-regularity initial data. In the radially symmetric case, they demonstrated the global existence for small data in $ H^3(\mathbb{R}^3) \times H^2(\mathbb{R}^3) $ with a low weight and applied their results to 3D nonlinear elastic waves. An--Chen--Yin \cite{ACY23} extended Lindblad's classical results \cite{Lin98} on the scalar wave equation by showing that the Cauchy problem for 3D elastic waves, a physical system with multiple wave speeds, is ill-posed in $H^3(\mathbb{R}^3) $ due to instantaneous shock formation. 

The Cauchy problem \eqref{nonlinear wave equation} with small initial data can essentially be reduced to the Cauchy problem for a system of quasi-linear hyperbolic equations. According to \cite{Daf16,Lax57}, for systems with genuinely nonlinear characteristic families, different solutions can be constructed in the absence of an entropy admissibility condition.  In this work, we present a constructive proof of non-uniqueness for a specific class of linearly degenerate systems.

In particular, there have been several significant studies on the  Lam\'e system, such as: Belishev--Lasiecka \cite{BL02} deal with the issue of boundary approximate controllability and related unique continuation
property for a system of dynamic elasticity governed by the Lam\'e model. Ma--Mesquita--Seminario--Huertas \cite{MMSH21} prove an existence result for Lamé systems with a damping-delay component and a nonlinear forcing. Wang--Freitas--Feng--Ramos \cite{WFFR17} investigated the global attractors and synchronization phenomenon of a coupled critical Lamé system defined on a smooth bounded domain $\Omega \subset \R^3$ with nonlinear damping and nonlinear forces of critical growth. In \cite{GH24}, Guesmia--Harkat focused on the asymptotic behaviour of the solution as $t$ and the state variable domain become very large. Different rates of convergence are established
according to the growth of the domain.

Our proof builds upon the convex integration method, originally developed by De Lellis and Székelyhidi \cite{DS09, DS13}. In their foundational works \cite{DS09, DS10}, they established non-uniqueness for incompressible Euler equations in $ L^\infty(\mathbb{R}_x^n \times \mathbb{R}_t; \mathbb{R}^n) $, showing that for certain bounded, compactly supported initial data, no canonical energy admissibility criterion uniquely selects a weak solution.  Building on these ideas, a series of remarkable developments \cite{Buc15, BDIS15, BDS16, DS17, DS13, DS14} culminated in the resolution of Onsager's conjecture. This conjecture posited that the exponent \(\alpha = 1/3\) marks the threshold for energy conservation in weak solutions within the Hölder space \(C^\alpha\) for the incompressible Euler equations. Isett \cite{Isett18} resolved the conjecture, and Buckmaster--De Lellis--Székelyhidi--Vicol \cite{BDSV19} extended the result to the dissipative case.  

Recent advancements have further deepened our understanding of the non-uniqueness of entropy solutions for both compressible and incompressible Euler equations in Hölder spaces. Notably, De Lellis--Kwon \cite{DK22} discovered continuous entropy solutions of the incompressible Euler equations in the Hölder class \(C^{\frac{1}{7}-}\) that satisfy the entropy inequality and strictly dissipate total kinetic energy. The case of the compressible Euler equations was subsequently explored by Giri--Kwon in \cite{GK22}. This approach has also been applied to other fluid dynamics systems, including the incompressible Navier-Stokes equations, the compressible Euler-Maxwell equations, and magnetohydrodynamics (MHD) equations, as demonstrated in works such as \cite{BBV20, BV19, CL22, LQZZ22, LZZ22, LQ20, LX20, MQ23, MY22, NV23}.

\par
There have also been a series of studies \cite{AKKMM20,BKM21,CDK15Re,CK18,CKMS21,KKMM20} on the non-uniqueness of weak solutions to the Riemann problem for various types of Euler equations. In \cite{CDK15Re}, Chiodaroli--De Lellis--Kreml examined the isentropic compressible Euler system in two spatial dimensions with the pressure law \( p(\rho) = \rho^2 \) and demonstrated the non-uniqueness of weak solutions for classical Riemann data. Building on this framework and leveraging the method from \cite{DS10}, Chiodaroli--Kreml \cite{CK18} proved the non-uniqueness of admissible weak solutions to the Riemann problem for isentropic Euler equations. Notably, in \cite{BKM21}, Kreml investigated the non-uniqueness in the multi-dimensional model of Chaplygin gas, showing that all three characteristic families of the system are linearly degenerate. 

This paper is the first to apply the convex integration method to prove non-uniqueness for the nonlinear dynamical Lamé system. The system studied here exhibits a different mathematical structure compared to the Euler equations, particularly due to the characteristic feature of double wave speeds, which is utilized to construct the corresponding building blocks.

In this paper, we construct non-unique solutions in \( C^{1,\frac{1}{60}-}([0,T] \times \mathbb{T}^3) \) and \( C^{1,\frac{1}{30}-}([0,T] \times \mathbb{T}^2) \) for the Cauchy problem \eqref{system}. The proof for the three-dimensional case is slightly more intricate than for the two-dimensional case, so we focus primarily on the three-dimensional case here. In fact, since the solution for the two-dimensional case can be regarded as a special case of the three-dimensional solution, the three-dimensional result serves as a corollary of the two-dimensional one.

\subsection{Main results}
Next, we will introduce the main result of this paper. We present two main theorems that imply the non-uniqueness of weak solutions of \eqref{system} in the H{\"o}lder class.
\begin{thm}\label{thm 1}
	If $\mu>0,$ and $\lambda+\mu>0$, for any $0< \alpha< \frac{1}{60},$ we can find
	infinitely many distinct weak solutions $u\in C^{1, \alpha}([0, T]\times \T^3)$ to the Cauchy problem \eqref{system} emanating from the same small initial data.
\end{thm}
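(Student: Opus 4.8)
The plan is to prove Theorem~\ref{thm 1} by a De Lellis--Sz\'ekelyhidi convex integration scheme adapted to the two distinct wave speeds of the principal part of \eqref{system}. Writing $\mathcal{L}u:=\pa_{tt}u-\mu\Delta u-(\lambda+\mu)\nb\Div u$ and $\mcA(\nb u):=\tr(\nb u(\nb u)^{\top})\Id-(\nb u)^{\top}\nb u$, so that \eqref{system} reads $\mathcal{L}u+\Div\mcA(\nb u)=0$, I would iterate on the relaxed \emph{Lam\'e--Reynolds system}
\begin{equation*}
	\mathcal{L}u+\Div\mcA(\nb u)=\Div\mathring{R},
\end{equation*}
where the defect $\mathring{R}(t,x)$ is a symmetric $d\times d$ matrix field, constructing smooth pairs $(u_q,\mathring{R}_q)$ with frequencies $\lambda_q=\lceil a^{b^{q}}\rceil$ and amplitudes $\delta_q=\lambda_q^{-2\beta}$ ($a\gg1$, $b>1$ close to $1$, $0<\beta\ll1$), under the inductive bounds $\|u_{q+1}-u_q\|_{C^1}\lesssim\delta_{q+1}^{1/2}$, $\|u_{q+1}-u_q\|_{C^2}\lesssim\delta_{q+1}^{1/2}\lambda_{q+1}$, $\|\mathring{R}_q\|_{C^0}\lesssim\delta_{q+1}\lambda_q^{-\alpha}$ and a matching one-derivative estimate. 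Fixing a small $\tau>0$ and letting $\bar u$ be the smooth solution of \eqref{system} with the prescribed small smooth data --- which exists on $[0,T]$ by the null condition that $\mcA$ satisfies --- I would also require $u_q-\bar u$ and $\mathring{R}_q$ to vanish for $t\le\tau$. Passing to the limit, $u:=\lim_q u_q\in C^{1,\alpha}([0,T]\times\T^3)$, $\mathring{R}_q\to 0$ uniformly, $u$ satisfies the weak formulation \eqref{eq of weak solution}, and $u\equiv\bar u$ on $[0,\tau]$, so $u(0,\cdot)=u_0$ and $\pa_t u(0,\cdot)=u_1$.

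The heart of the argument is the choice of the perturbation $w_{q+1}=u_{q+1}-u_q$. Denote by $c_P=\sqrt{\lambda+2\mu}$ and $c_S=\sqrt{\mu}$ the longitudinal (compression) and transverse (shear) wave speeds of $\mathcal{L}$; note that $c_P>c_S$ is equivalent to $\lambda+\mu>0$. After mollifying $(u_q,\mathring{R}_q)$ at a scale $\ell$ with $\lambda_q^{-1}\ll\ell\ll\lambda_{q+1}^{-1}$, I would take $w_{q+1}$ to be a sum, over a space-time partition of unity and a finite set of rational directions $\xi$, of modulated plane waves
\begin{equation*}
	a_\xi(t,x)\,W_\xi\cos\big(\lambda_{q+1}(\xi\cdot x-c_\xi t)\big)
\end{equation*}
plus lower-order correctors, where each polarization $W_\xi$ is chosen parallel to $\xi$ (so $c_\xi=c_P$: a compression wave) or perpendicular to $\xi$ (so $c_\xi=c_S$: a shear wave), which makes every summand an exact solution of the constant-coefficient equation $\mathcal{L}(\cdot)=0$. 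A pointwise computation shows $\mcA(\nb w)\succeq 0$ for every $w$, so the low-spatial-frequency part of $\mcA(\nb w_{q+1})$, a positive combination $\tfrac12\lambda_{q+1}^2\sum_\xi a_\xi^2(|W_\xi|^2\Id-W_\xi\otimes W_\xi)$, can cancel $-\mathring{R}_q$ only once $\mathring{R}_q$ has been arranged to be negative semidefinite. This is the role of the \emph{compression} modes: since $\mathcal{L}$ acts on curl-free fields $\nb\psi$ as the scalar wave operator $\nb(\pa_{tt}\psi-c_P^2\Delta\psi)$, adding a curl-free compression correction allows one to shift $\mathring{R}_q$ by a multiple of $\Id$ --- the analogue of the pressure in the incompressible schemes --- and so reduce to the semidefinite case, after which the amplitudes $a_\xi$ (with $\|a_\xi\|_{C^0}\sim\delta_{q+1}^{1/2}\lambda_{q+1}^{-1}$) are fixed, via an analogue of the geometric lemma, so that the selected low-frequency part of $\mcA(\nb w_{q+1})$ cancels $-\mathring{R}_q$ modulo divergence-free matrix fields. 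These choices give $\|w_{q+1}\|_{C^1}\lesssim\delta_{q+1}^{1/2}$ and $\|w_{q+1}\|_{C^2}\lesssim\delta_{q+1}^{1/2}\lambda_{q+1}$, hence $\|w_{q+1}\|_{C^{1,\alpha}}\lesssim\lambda_{q+1}^{\alpha-\beta}$, a summable series precisely when $\alpha<\beta$.

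Substituting $u_{q+1}=u_q+w_{q+1}$ into the relaxed system, $\mathring{R}_{q+1}$ is obtained by applying an inverse-divergence operator $\cR$ (which gains a factor $\lambda_{q+1}^{-1}$ on every high-frequency mode) to: the \emph{oscillation error} $\Div[\mcA(\nb w_{q+1})-(\text{its chosen low-frequency part})]$ together with the cross terms $\Div[\mcA(\nb u_q+\nb w_{q+1})-\mcA(\nb u_q)-\mcA(\nb w_{q+1})]$; the \emph{linear error} $\mathcal{L}w_{q+1}$, which is of lower order because each plane wave is annihilated at top order while $a_\xi$ varies only on the slow scale $\ell^{-1}$; the \emph{Nash error} coming from the variation of the $a_\xi$; and the \emph{mollification error}. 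With $b$ near $1$ and $\beta$ small, every contribution is $\lesssim\delta_{q+2}\lambda_{q+1}^{-\alpha}$ in $C^0$ with a matching one-derivative bound, which closes the induction. The binding constraint comes from the oscillation error: because $\mcA$ is quadratic in $\nb u$, the interactions carry two extra frequency factors that must be beaten by the single gain of $\cR$ against the $C^2$-growth $\delta_q^{1/2}\lambda_q$ of $u_q$ and the cost of the compression correction; optimizing the resulting inequalities in $d=3$ forces $\beta<\tfrac1{60}$, and since $\alpha$ may be taken arbitrarily close to the admissible $\beta$, one gets $\alpha<\tfrac1{60}$. As all building blocks and correctors carry time cut-offs supported in $(\tau,T)$, the construction never disturbs the data at $t=0$.

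Finally, to obtain infinitely many distinct weak solutions from the \emph{same} data I would run the scheme from a one-parameter family of initial subsolutions $(u_0^{\sigma},\mathring{R}_0^{\sigma})$, $\sigma\in[0,1]$, all equal to $\bar u$ on $[0,\tau]$ and equal to $\bar u+\sigma\chi$ on $(\tau,T)$ for a fixed nonzero small smooth $\chi$ supported there. Since the total perturbation satisfies $\|u^{\sigma}-u_0^{\sigma}\|_{C^1}\lesssim\sum_q\delta_q^{1/2}$, which for $a$ large is far smaller than $\|u_0^{1}-u_0^{0}\|_{C^1}$, the map $\sigma\mapsto u^{\sigma}$ is continuous and non-constant, so its image is an infinite family of solutions in $C^{1,\alpha}([0,T]\times\T^3)$ emanating from the prescribed small data --- which proves Theorem~\ref{thm 1}. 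The step I expect to be the main obstacle is the construction in the second paragraph: since $\mcA(\nb w)$ is always positive semidefinite, the interactions of the new building blocks cannot directly cancel a sign-indefinite defect, and the natural remedy --- a ``pressure'' supplied by the compression sector --- solves a \emph{hyperbolic} rather than an elliptic problem, so controlling it, and more generally reconciling the two-speed building blocks with the quadratic-gradient nonlinearity and the single $\lambda_{q+1}^{-1}$ gain of $\cR$, is the delicate technical core and is what ultimately limits the H\"older exponent to $\tfrac1{60}$.
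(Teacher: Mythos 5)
Your overall framework (relaxed Lam\'e--Reynolds system, cancellation of the defect by the low-frequency part of the quadratic nonlinearity via a geometric lemma, inverse divergence, $C^{1,\alpha}$ convergence, time cut-offs to preserve the data) is the same in spirit as the paper's, but the construction in your second paragraph fails at exactly the point the paper identifies as the core difficulty. With building blocks that are pure longitudinal/transverse plane waves solving only $\mathcal{L}w=0$, the low--high cross term $\Div[\mcA(\nb u_q+\nb w_{q+1})-\mcA(\nb u_q)-\mcA(\nb w_{q+1})]$, which you propose to absorb into the oscillation error, is bilinear in $\nb u_\ell$ (size $\varepsilon$, slow) and $\nb w_{q+1}$ (size $\delta_{q+1}^{1/2}$, frequency $\lambda_{q+1}$); its divergence has size $\varepsilon\,\delta_{q+1}^{1/2}\lambda_{q+1}$ with no cancellation structure, so $\cR$ returns only $\varepsilon\,\delta_{q+1}^{1/2}$, which is far larger than the admissible $\delta_{q+2}$ since $\varepsilon$ is a fixed constant. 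This is precisely why the paper does not use free waves: Lemma \ref{construction of building blocks} builds the frozen linearized nonlinearity $A_I(\nabla^2 w)$ into the blocks, i.e.\ \eqref{prop of w_A}, which forces a modified dispersion relation $((\lambda+2\mu)|f|^2-c_{A_I})^{1/2}$ and small non-longitudinal polarization corrections $a_{A_I,2},a_{A_I,3}$ (solvable exactly because $\lambda+\mu>0$), leaving only commutator-type linear errors of size $(\tau_{q,i}+\mu_{q,i})\lambda_{q,i}\delta_{q,i}^{1/2}\delta_{q+1}^{1/2}$. A second gap of the same nature: you add waves in several directions simultaneously, but for this quadratic-gradient nonlinearity the high-high products of waves in \emph{different} directions have no divergence-form cancellation, and after $\Div$ and $\cR$ they are of order $\delta_{q+1}$, not $\delta_{q+2}$; the paper is forced to add one direction per sub-step through the frequency ladder $\lambda_{q,i}=\lambda_q^{1-i/6}\lambda_{q+1}^{i/6}$, and it is this six-fold splitting (not a generic optimization of your scheme) that produces the threshold $1/60$, so your claimed derivation of $\beta<\tfrac1{60}$ is unsupported.

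Two further points. The ``compression-sector pressure'' you invoke to make the defect semidefinite is unnecessary and would itself require solving and estimating an auxiliary hyperbolic problem: since a \emph{constant} multiple of the identity is divergence-free, the paper simply carries the constant reservoir $c_q\Id$, $c_q=\sum_{j>q}\delta_j\to0$, and applies the geometric lemma to $\delta_{q+1}\Id-R_\ell$, which lies in the admissible ball because $\|R_\ell\|\leqslant\lambda_q^{-2\gamma}\delta_{q+1}$. Finally, your last step does not yet give infinitely many solutions: the bound $\|u^{\sigma}-u_0^{\sigma}\|_{C^1}\lesssim\sum_q\delta_q^{1/2}$ only separates finitely many parameters (the admissible perturbations $\chi$ are bounded in the $C^3$-type norms required by the induction, hence precompact in $C^1$, so no infinite uniformly separated family exists), and the continuity of $\sigma\mapsto u^{\sigma}$ through the entire iteration is asserted but not proved and is far from automatic. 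The paper instead bifurcates inside the scheme: at an arbitrary step $\bar q$ it flips the sign of one coefficient $\Gamma_{f_6}$ on a chosen time interval, proves the quantitative lower bound $\|\overline{u}_{q+1}-u_{q+1}\|_{C^0([0,T];L^2(\T^3))}\geqslant(16\lambda_{q+1})^{-1}\pi^{3/2}\delta_{q+1}^{1/2}$, which beats the tail of all subsequent corrections, and varies $\bar q$ and the interval to obtain infinitely many solutions with the same data; some such quantitative mechanism is what your argument is missing.
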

\begin{thm}
	\label{thm 2}
	If $\mu>0,$ and $\lambda+\mu>0$, for any $0< \alpha< \frac{1}{30},$ we can find
	infinitely many distinct weak solutions $u\in C^{1, \alpha}([0, T]\times \T^2)$ to the Cauchy problem \eqref{system} emanating from the same small initial data.
\end{thm}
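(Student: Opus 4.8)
The proof is by a convex integration scheme of De Lellis--Sz\'ekelyhidi type \cite{DS13, BDSV19}, with a new family of building blocks adapted to the two characteristic speeds of \eqref{system}; I describe it in two space dimensions. \textbf{Reformulation.} First I would rewrite \eqref{system} as a first--order quasilinear system for $(v,\mcA)$ with $v=\pa_t u$, $\mcA=\nb u$, schematically $\pa_t v-\mu\,\Div\mcA-(\lambda+\mu)\nb\tr\mcA=-\Div N(\mcA)$ and $\pa_t\mcA-\nb v=0$, where $N(\mcA)=\tr(\mcA\mcA^{\top})\Id-\mcA^{\top}\mcA$, and then pass to the \emph{relaxed} system in which the first equation carries an additional symmetric stress $\mathring R$ on the right--hand side; a weak solution in the sense of Definition~\ref{def of weak solution} is exactly a solution with $\mathring R\equiv0$. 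The linear part diagonalises, via the Helmholtz decomposition, into a shear wave of speed $\sqrt\mu$ and a compression wave of speed $\sqrt{\lambda+2\mu}$. I would also record the null condition satisfied by $N$ (noted in the introduction) as an algebraic identity guaranteeing that the resonant quadratic self--interaction of two plane waves travelling along the \emph{same} characteristic cone cancels to top order --- the structural input that makes the scheme run.

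\textbf{The iteration and its parameters.} I would build a sequence $(u_q,\mathring R_q)$ of solutions of the relaxed system, with frequencies $\lambda_q=\lceil a^{b^q}\rceil$, amplitudes $\delta_q=\lambda_q^{-2\beta}$, and a subordinate hierarchy of scales (an oscillation scale, the compression/concentration scale, and a time--cutoff scale), satisfying the inductive bounds
\[
\|u_{q+1}-u_q\|_{C^1}\lesssim\delta_{q+1}^{1/2},\qquad \|u_{q+1}-u_q\|_{C^2}\lesssim\delta_{q+1}^{1/2}\lambda_{q+1},\qquad \|\mathring R_q\|_{C^0}\longrightarrow 0 .
\]
Summing the first estimate geometrically and interpolating with the second gives convergence $u_q\to u$ in $C^{1,\alpha}([0,T]\times\T^2)$ for every $\alpha<\beta$; the uniform decay of $\mathring R_q$ then allows passage to the limit in \eqref{eq of weak solution}, while the concentration of the building blocks is what delivers the $H^2\times H^1$ regularity demanded by Definition~\ref{def of weak solution}. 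Optimising the parameter inequalities so that all error estimates close forces $\beta<\tfrac1{30}$, which is where the H\"older threshold in Theorem~\ref{thm 2} comes from.

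\textbf{Building blocks with compression, and the new error.} The heart of the matter is the perturbation $w_{q+1}=u_{q+1}-u_q$, which I would take to be a finite superposition over a fixed set of characteristic wave vectors $k$ of amplitude--modulated, highly oscillatory plane waves $a_{k}(t,x)\,W_k\!\big(\lambda_{q+1}(k\cdot x\mp c_j t)\big)$, with $c_j\in\{\sqrt\mu,\sqrt{\lambda+2\mu}\}$, where the one--dimensional profiles $W_k$ solve the relevant linearised wave equation and are \emph{compressed} --- sharply concentrated near the characteristic hyperplanes $k\cdot x=\pm c_j t$ --- together with lower--order correctors that restore the differential constraint (that $\mcA$ remain a gradient) and keep the time support inside a fixed interval $(\tau,T)$. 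A geometric lemma, a two--dimensional analogue of the Mikado/Beltrami decomposition stating that every symmetric $2\times2$ matrix near a reference one is a nonnegative combination $\sum_k\gamma_k\,(\text{rank-one tensor of }k)$ with smooth coefficients, is then used to choose the $a_k$ so that the quadratic self--interaction of $w_{q+1}$ equals $-\mathring R_q$ modulo high frequencies. Defining $\mathring R_{q+1}$ by applying a bounded anti--divergence to the remaining terms --- the transport/material--derivative errors along the two characteristic flows, the oscillation error (cross--interactions of the two speeds being non--resonant precisely \emph{because} $\sqrt\mu\ne\sqrt{\lambda+2\mu}$), the Nash error from the modulation of the $a_k$, and the corrector errors --- I would verify via stationary phase, exploiting the gain from the compression, that each piece is $\ll\delta_{q+2}$ when the parameter hierarchy holds.

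\textbf{Same data, infinitely many solutions, and the main difficulty.} Starting the scheme from a fixed smooth short--time solution of \eqref{system} with the prescribed small data (which exists by local well--posedness) and arranging $\supp_t w_{q+1}\subset(\tau,T)$ for a fixed $\tau>0$ independent of $q$, all the $u_q$, hence $u$, coincide on $[0,\tau]\times\T^2$ and therefore share the initial data $(u_0,u_1)$. Letting the amplitude of the very first perturbation on $(\tau,T)$ range over a one--parameter family of admissible profiles produces a continuum of weak solutions, pairwise distinct on $(\tau,T)$, all with the same data and all in $C^{1,\alpha}$. The step I expect to be the main obstacle is the joint design in the previous paragraph: the compressed building blocks must at once be accurate approximate solutions of \emph{both} linearised wave equations, be algebraically rich enough --- in two dimensions, where characteristic directions are few --- to cancel an arbitrary symmetric stress through the geometric lemma, and be mild enough that the transport errors along the two distinct speeds, the cross--speed interaction terms, and the loss from differentiating a sharp time cutoff can all be absorbed into $\delta_{q+2}$. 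Reconciling these competing requirements is precisely what pins the admissible exponent at $\alpha<\tfrac1{30}$.
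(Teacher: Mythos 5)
Your proposal sits in the same convex-integration family as the paper, but two of its load-bearing steps would not survive contact with this particular system. First, the linear coupling term $\Div\big(\tr(2\nabla u_q(\nabla w)^{\top})\Id-(\nabla w)^{\top}\nabla u_q-(\nabla u_q)^{\top}\nabla w\big)$ is not addressed by your design. Your profiles solve the \emph{constant-coefficient} Lam\'e system at the exact speeds $\sqrt{\mu}$, $\sqrt{\lambda+2\mu}$, so this term is of size $\varepsilon\lambda_{q+1}\delta_{q+1}^{1/2}$ and, after the anti-divergence, contributes roughly $\varepsilon\delta_{q+1}^{1/2}$ to the new stress; since $\varepsilon$ is fixed this is never $\lesssim\delta_{q+2}$, and the iteration does not close. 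The paper's building blocks (Lemma \ref{construction of building blocks}) are exact plane waves of the wave operator with coefficients frozen cell-by-cell via $A_I$ in \eqref{definition of A_I}; this forces a shifted dispersion $((\lambda+2\mu)|f|^2-c_{A_I})^{1/2}$ and $O(\varepsilon)$ transverse polarization corrections, solvable precisely because $\lambda+\mu>0$ separates the two speeds — that is where the double-speed structure enters, not through "non-resonance of cross-speed interactions", and "compression structure" means quasi-longitudinal (P-wave) blocks, not spatially concentrated profiles (concentration only inflates $C^0$ amplitudes in a H\"older scheme and buys nothing here). Second, you superpose all wave vectors at once; for this nonlinearity, waves in different directions produce low--high frequency interaction terms that cannot be absorbed, which is exactly why the paper adds one direction per sub-step at intermediate frequencies $\lambda_{q,i}$ (three sub-steps in 2D, six in 3D). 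That sub-stepping is what produces the thresholds $1/30$ and $1/60$; your claim that $1/30$ falls out of optimizing a single-step parameter hierarchy is unsubstantiated.

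The non-uniqueness mechanism is also incomplete. The amplitudes are dictated by the geometric identity \eqref{Geometric lemma 1} in order to cancel $R_q$, so you cannot let "the amplitude of the very first perturbation range over a one-parameter family" without destroying the cancellation. The paper's freedom is a sign flip $\Gamma_{f_6}\mapsto-\Gamma_{f_6}$ on a chosen time block (the square, hence the cancellation and all estimates, is unchanged), and, crucially, Proposition \ref{Bifurcating inductive proposition} supplies a quantitative separation $\|\overline u_{q+1}-u_{q+1}\|_{C^0([0,T];L^2)}\gtrsim\delta_{q+1}^{1/2}\lambda_{q+1}^{-1}$ together with control of the time support of later differences, so that the discrepancy survives the (different) corrections of the two branches in the limit. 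Without such a lower bound your assertion that the limits are "pairwise distinct on $(\tau,T)$" is unproven. A minor further point: the theorem asserts non-uniqueness for \emph{some} small data, which the paper realizes by starting from an explicit global plane-wave solution with $R_0=0$; starting from a local-in-time smooth solution of prescribed data is workable but adds an extension/truncation step your sketch does not supply.
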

In this paper, we adapt the convex integration scheme to the quasi-linear wave equations \eqref{system}.  Below, we will introduce the proof process and highlight the differences from the compressible and incompressible Euler equations.\par 
For quasi-linear wave equations, we consider the weak solutions of \eqref{system} emanating from small initial data, with the magnitude of the initial data  $\|u\|_1 $ controlled by a small parameter $\varepsilon$. During the proof process, we would find that $\varepsilon$ depends on the difference of double wave speeds $\lambda+\mu$. In the convex integration scheme, we will construct a series of approximate solutions which converge to the weak solution of \eqref{system}. A crucial step in this process is the construction of the perturbation. This step is also where the differences between the quasi-linear wave equations and the Euler equations arise.\par  
For the Euler equations, we typically aim to construct perturbations that approximately satisfy the transport equation to obtain good estimates on the transport error. However, after some simple calculations, we find that the perturbations used in this paper need to approximately satisfy specific wave equations. Moreover, based on the truncation technique, at $q+1$ step, we can treat the $u_{q}$ from the previous step and its derivatives as constants in the truncated region, so that the equation to be satisfied by the perturbation can be reduced to a wave equation with constant coefficients (see Section \ref{Definition of the perturbation}).  Based on this observation, we propose Lemma \ref{construction of building blocks} to construct new building blocks which consists of a longitudinal wave of size $O(1)$ and a transverse wave of size $O(\varepsilon)$. Notably, we can construct such building blocks only if the wave speed difference  $\lambda+\mu>0,$ highlighting the importance of the double wave speed property. \par 
The construction of the perturbation is also closely related to the form of the nonlinear terms. The nonlinear terms in \eqref{system} differ from those in the Euler equations. In \eqref{system}, the nonlinear term $\Div(\tr(\nabla u(\nabla u)^{\top})\Id-(\nabla u)^\top\nabla u)$  guarantees that the equation satisfies the null condition and that the corresponding hyperbolic system is linearly degenerate. Moreover, it guarantees a certain geometric structure so that by \eqref{Geometric lemma 1}, we can use the low frequency part of $\tr(\nabla \tu_{q,i+1}(\nabla \tu_{q,i+1})^{\top})\Id-(\nabla \tu_{q,i+1})^\top\nabla \tu_{q,i+1}$ to eliminate the Reynolds error $R_q$.  It is also due to this form that waves in different directions produce low--high frequency terms that cannot be eliminated and are poor in estimation. So we can only add perturbation in one direction each time. Then, for the 3D case, at each step of the iteration, we need to add different high frequency waves to the approximate solution for six times (see  Section \ref{Outline of the induction scheme}). This method was used by Luo and Xin in \cite{LX20}.
\subsection{Organization of the paper}\label{Outline of the proof}
In Section \ref{Outline of the induction scheme}, we first present the outline of the induction scheme for constructing the approximate solution sequence $(u_q, c_q, R_q)$ and introduce two main propositions, Propositions \ref{Inductive proposition} and \ref{Bifurcating inductive proposition}, which will be used in the proof of Theorem \ref{thm 1}. Next, in Section \ref{Construction of the starting tuple and the perturbation}, we construct the starting tuple $u_0$ and the perturbation $\tilde{u}_{q,i},$ which consists of five parts. The new error $R_{q+1}$ and the corresponding estimates are given in Sections \ref{Definition of the new errors} and \ref{Estimates on the new Reynolds error}. Finally, the proofs of Propositions \ref{Inductive proposition}, \ref{Bifurcating inductive proposition}, and Theorem \ref{thm 1} are detailed in Section \ref{Proof of the main theorem}. The appendix provides proofs or statements of analytical facts used in the proofs of the propositions in the paper.

\section{Outline of the induction scheme}\label{Outline of the induction scheme}
In this paper, we will construct a series of approximate solutions $u_q$ which satisfy the following approximate system and converge to a weak solution $u$ of  \eqref{system}.
\begin{df}
	 A tuple of smooth tensors $(u,c,R)$ is an  approximation solution tensor of equations \eqref{system} as long as it solves the following system in the sense of distribution,
	\begin{equation}
		\partial_{tt}u-\mu\Delta u-(\lambda+\mu)\nabla\Div u+\Div(\tr(\nabla u(\nabla u)^{\top})\Id-(\nabla u)^\top\nabla u)=\Div(R-c\Id), \label{approximation elstro dynamic}
	\end{equation}
	where the Reynolds error $R$ is a $3\times3$ symmetric matrix.
\end{df}
We denote by $\R^{3\times3}$ the space of $3\times 3$ matrices, whereas $\mS^{3\times3}$ denotes corresponding subspace of symmetric matrices.  Moreover, we use the notation $\|R\|=\max_{ij}|R_{ij}|$ and  introduce the following geometric lemma proposed in \cite{DS13}.
\begin{lm}[Geometric Lemma]\label{Geometric Lemma} For every $N\in\N,$ we can choose $\overline{\lambda}>1$ with the following property. There exist pairwise disjoint subsets
	\begin{align*}
		\Lambda_j\subset\{f\in\Z^3||f|=\overline{\lambda}\}, \quad j\in\{1,2, \cdots,N\},
	\end{align*}
	and smooth functions
	\begin{align*}
		\Gamma_f^{(j)}\in C^\infty(B_{r_0}(\Id)), \quad j\in\{1,2, \cdots,N\},f\in \Lambda_j,
	\end{align*}
	such that\\
	(a)$f\in \Lambda_j$ implies $-f\in \Lambda_j$ and $\Gamma_f^{(j)}=\Gamma_{-f}^{(j)},$ then  we could divide $\Lambda_j$ into two parts $\Lambda_j^+$ and $\Lambda_j^-$ which satisfy 
	\begin{align*}
		\Lambda_j=\Lambda_j^+\bigcup\Lambda_j^-, \quad  \Lambda_j^+\bigcap \Lambda_j^-=\emptyset,
	\end{align*}
	(b) for each $K\in B_{r_0}(\Id) ,$ we have the identity
	\begin{align*}
		K=\frac{1}{2}\sum_{f\in \Lambda_j}(\Gamma_f^{(j)}(K))^2\left(\Id-\frac{f}{|f|}\otimes \frac{f}{|f|}\right)=\sum_{f\in \Lambda_j^+}(\Gamma_f^{(j)}(K))^2\left(\Id-\frac{f}{|f|}\otimes \frac{f}{|f|}\right), \quad \forall K\in B_{r_0}(\Id).
	\end{align*}
\end{lm}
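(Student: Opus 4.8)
The plan is to reduce the matrix identity to a standard Fourier-analytic / partition-of-unity argument on the space of symmetric matrices. First I would fix $N$ and recall the key algebraic observation that the rank-one matrices $\Id - \frac{f}{|f|}\otimes\frac{f}{|f|}$, as $f$ ranges over a large enough set of integer vectors of common length $\overline\lambda$, positively span a neighborhood of $\Id$ inside $\mS^{3\times3}$. Concretely, one checks that $\Id = \frac{1}{d-1}\sum_{f}\big(\Id - \frac{f}{|f|}\otimes\frac{f}{|f|}\big)$ for suitable symmetric configurations of directions (e.g.\ an orthonormal frame, $d=3$), so $\Id$ lies in the interior of the convex cone generated by these projections. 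Hence there is a radius $r_0>0$ and, for each $K\in B_{r_0}(\Id)$, nonnegative coefficients writing $K$ as such a combination. The content of the lemma is to make these coefficients smooth in $K$ and to arrange the direction set to be a union of $N$ disjoint pieces, each of which already suffices on its own.

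Next I would produce the coefficients. By Carathéodory plus an open-ness argument, or more cleanly by an explicit construction: choose finitely many rank-one projections $P_1,\dots,P_M$ (with $P_m = \Id - \frac{f_m}{|f_m|}\otimes\frac{f_m}{|f_m|}$) whose conical hull contains $B_{r_0}(\Id)$ with room to spare, and fix one base representation $\Id = \sum_m a_m P_m$ with all $a_m>0$. For $K$ near $\Id$ write $K = \Id + (K-\Id)$ and expand the perturbation $K-\Id$ in the $P_m$'s using a fixed linear right inverse of the (surjective) map $(c_m)\mapsto \sum_m c_m P_m$ from $\R^M$ to $\mS^{3\times3}$; for $r_0$ small the resulting coefficients $a_m(K)$ stay positive and depend \emph{linearly} (hence smoothly) on $K$. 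Then set $\Gamma_{f_m}^{(j)}(K) = \sqrt{2\,a_m(K)}$ (resp.\ with the factor matching the $\tfrac12\sum_{\Lambda_j}$ vs.\ $\sum_{\Lambda_j^+}$ normalization), which is smooth since $a_m(K)>0$ on $B_{r_0}(\Id)$. To get $N$ disjoint families I would simply take $N$ disjoint translates/rotations of the configuration onto the sphere of radius $\overline\lambda$ in $\Z^3$: since $\{f\in\Z^3:|f|=\overline\lambda\}$ has arbitrarily many elements as $\overline\lambda\to\infty$ (take $\overline\lambda^2$ a large integer representable as a sum of three squares in many ways), one can choose the $\Lambda_j$ pairwise disjoint and each containing a valid spanning configuration; enlarging $\overline\lambda$ as a function of $N$ is exactly the dependence asserted. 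Symmetry $f\in\Lambda_j\Rightarrow -f\in\Lambda_j$ with $\Gamma_f^{(j)}=\Gamma_{-f}^{(j)}$ is arranged by always including antipodal pairs and assigning equal coefficients, after which $\Lambda_j^\pm$ is any choice of one vector from each pair.

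The main obstacle I expect is not the linear algebra but the number theory of the sphere combined with the disjointness requirement: one must guarantee that a single radius $\overline\lambda$ simultaneously (i) admits enough integer points, (ii) those points can be partitioned into $N$ subsets each of which positively spans a fixed ball around $\Id$, and (iii) antipodal symmetry is preserved. I would handle this by first choosing an abstract finite set of real unit directions that robustly spans (with a quantitative margin $r_0$), then invoking density of rational directions on the sphere and a perturbation estimate to replace them by genuine elements of $\overline\lambda^{-1}\Z^3$ for $\overline\lambda$ large, shrinking $r_0$ slightly to absorb the approximation error; finally taking $\overline\lambda$ large enough that $N$ disjoint rotated copies fit. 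Everything downstream — smoothness of $\Gamma_f^{(j)}$ and the two displayed identities — then follows from the construction, with the second identity in (b) being literally the first one split according to $\Lambda_j = \Lambda_j^+\sqcup\Lambda_j^-$ using $\Gamma_f^{(j)}=\Gamma_{-f}^{(j)}$ and $\frac{f}{|f|}\otimes\frac{f}{|f|} = \frac{-f}{|f|}\otimes\frac{-f}{|f|}$.
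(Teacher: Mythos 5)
Your outline is essentially correct, and it is the standard proof of this lemma; but note that the paper itself does not prove the statement at all: it quotes the lemma from \cite{DS13} and then records one completely explicit instance that is all it ever uses, namely $\overline{\lambda}=\sqrt{2}$, the twelve vectors $(0,\pm1,\pm1),(\pm1,0,\pm1),(\pm1,\pm1,0)$, the radius $r_0=\tfrac{1}{18}$, and affine formulas for $\Gamma_{f_i}^2(K)$ written out in \eqref{Geometric lemma 1}. Your mechanism --- fix a base representation $\Id=\sum_m a_m P_m$ with $a_m>0$, expand $K-\Id$ through a fixed linear right inverse of $(c_m)\mapsto\sum_m c_m P_m$, take square roots of the resulting affine and positive coefficients, and symmetrize over antipodal pairs --- is exactly the abstract version of what the paper's formula computes by hand for a single family, so at the level of one $\Lambda_j$ the two routes coincide. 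What your argument buys is the general statement (arbitrary $N$, unspecified $\overline{\lambda}$); what the paper's explicit instance buys is concreteness that is actually needed downstream, since the coefficients re-enter quantitatively (for example the lower bound $d_{q,6}^2\geqslant\delta_{q+1}/8$ in the proof of Proposition \ref{Bifurcating inductive proposition} is read off from the explicit $\Gamma_{f_6}^2$).

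One step of your plan would fail as literally phrased: taking ``$N$ disjoint translates/rotations of the configuration onto the sphere of radius $\overline{\lambda}$ in $\Z^3$'' does not work, because a rotation of an integer vector is in general not an integer vector, and the mere abundance of points in $\{f\in\Z^3:|f|=\overline{\lambda}\}$ does not by itself give $N$ spanning subfamilies (directional equidistribution of lattice points on spheres is a deep number-theoretic input you should not invoke). Your fallback route is the right one and needs no such input: choose $N$ pairwise disjoint finite sets of rational unit directions, each so close to a fixed robustly spanning real configuration that the positive-spanning property (and hence a slightly reduced $r_0$) survives, write each direction as $f/|f|$ with $f\in\Z^3$, and rescale all of them by the least common multiple of the occurring norms so that every chosen vector lies on one common sphere of radius $\overline{\lambda}$; antipodal symmetry is preserved throughout by carrying $\pm f$ together. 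With that replacement your proof is complete and, apart from treating general $N$, agrees in substance with the construction the paper uses.
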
     
In this paper, we use the following specific formula. With $\overline{\lambda}=\sqrt{2}>1,$ we choose
\begin{align*}
\Lambda&=\{(0, 1, \pm 1),(0, -1, \pm 1),(1,0, \pm1),(-1,0, \pm1),(1, \pm1,0),(-1, \pm1,0)\},\\
\Lambda^+&=\{f_i\}_{i=1}^6=\{(0,1, \pm 1),(1,0, \pm1),(1, \pm1,0)\}.
\end{align*}  
Notice that
\begin{align*}
	\sum_{i=1}^{6}\frac{1}{4}\left(\Id-\frac{f_i}{|f_i|}\otimes \frac{f_i}{|f_i|}\right)=\Id,
\end{align*}
and
\begin{align*}
	&\Id-\frac{f_1}{|f_1|}\otimes \frac{f_1}{|f_1|}=\left(\begin{matrix}
		1&0&0\\
		0&\frac{1}{2}&-\frac{1}{2}\\
		0&-\frac{1}{2}&\frac{1}{2}\\
	\end{matrix}\right),&& \Id-\frac{f_3}{|f_3|}\otimes \frac{f_3}{|f_3|}=\left(\begin{matrix}
	\frac{1}{2}&0&-\frac{1}{2}\\
	0&1&0\\
	-\frac{1}{2}&0&\frac{1}{2}\\
\end{matrix}\right),&&
\Id-\frac{f_5}{|f_5|}\otimes \frac{f_5}{|f_5|}=\left(\begin{matrix}
	\frac{1}{2}&-\frac{1}{2}&0\\
	-\frac{1}{2}&\frac{1}{2}&0\\
	0&0&1\\
\end{matrix}\right), \\
	&\Id-\frac{f_2}{|f_2|}\otimes \frac{f_2}{|f_2|}=\left(\begin{matrix}
	1&0&0\\
	0&\frac{1}{2}&\frac{1}{2}\\
	0&\frac{1}{2}&\frac{1}{2}\\
\end{matrix}\right),&& \Id-\frac{f_4}{|f_4|}\otimes \frac{f_4}{|f_4|}=\left(\begin{matrix}
	\frac{1}{2}&0&\frac{1}{2}\\
	0&1&0\\
	\frac{1}{2}&0&\frac{1}{2}\\
\end{matrix}\right),&&
\Id-\frac{f_6}{|f_6|}\otimes \frac{f_6}{|f_6|}=\left(\begin{matrix}
	\frac{1}{2}&\frac{1}{2}&0\\
	\frac{1}{2}&\frac{1}{2}&0\\
	0&0&1\\
\end{matrix}\right).
\end{align*}
For $r_0=\frac{1}{18}$ and  $K\in B_{\frac{1}{18}}(\Id) ,$ we could represent $K$ as 
	\begin{align}
	K&=\sum_{i=1}^{3}\left(\frac{3K_{ii}+\sum_{j,k=1}^3|\varepsilon_{ijk}|(2K_{jk}-K_{jj})}{4}\right)\left(\Id-\frac{f_{2i}}{|f_{2i}|}\otimes \frac{f_{2i}}{|f_{2i}|}\right)\nonumber\\
	&\quad+\sum_{i=1}^{3}\left(\frac{3K_{ii}-\sum_{j,k=1}^3|\varepsilon_{ijk}|(2K_{jk}+K_{jj})}{4}\right)\left(\Id-\frac{f_{2i-1}}{|f_{2i-1}|}\otimes \frac{f_{2i-1}}{|f_{2i-1}|}\right)\nonumber\\
	&=\sum_{i=1}^6\Gamma_{f_i}^2(K)\left(\Id-\frac{f_i}{|f_i|}\otimes \frac{f_i}{|f_i|}\right).\label{Geometric lemma 1}
\end{align}
Moreover, for each symmetric tensor $R\in \mathbb{S}^{3\times 3},$ we have
\begin{align*}
	\frac{c}{r_0}\Id-R=\sum_{i=1 }^6\left(\left(\frac{c}{r_0}\right)^{\frac{1}{2}}\Gamma_{f_i}\left(\Id-\frac{r_0}{c}R\right)\right)^2\left(\Id-\frac{f_i}{|f_i|}\otimes \frac{f_i}{|f_i|}\right), \quad \text{with}\ c>\|R\|.
\end{align*}
For convenience, we introduce the following notation:
\begin{itemize}
	\item $\cI + \sigma$ is the concentric enlarged interval $(a-\sigma, b+\sigma)$ when $\cI = [a,b]$.
	\item Furthermore, for the sake of convenience, in what follows, we use the notation $A \lesssim_{\kappa} B$ to mean $A \leqslant CB,$ where $C > 0$ may depend on some fixed constants or functions $\kappa$. Especially, we use the notation $A\lesssim_{N,r} B$ without pointing out the dependence of the implicit constant $C,$ and $N,r\in\N$ can be chosen to have $N\leqslant N^*, r\leqslant r^*$ for some positive integers $N^*,r^*$.    We will not repeatedly specify this.
\end{itemize}
Next, we give some parameters to measure the size of our approximate solutions,
\begin{equation}
	\begin{aligned}
		&\lambda_q=2^{6\lceil b^q\log_2a\rceil}, &&\lambda_{q,i}=\lambda_{q}^{1-\frac{i}{6}}\lambda_{q+1}^{\frac{i}{6}},
		\quad 0\leqslant i\leqslant 6, \\
		&\delta_q=\lambda_q^{-2\beta}, &&\ c_q=\sum_{j=q+1}^{\infty}\delta_j,
	\end{aligned}\label{def of parameter}
\end{equation}
where $a>0$ is a large parameter and $b>1,\ \beta>0$. Due to truncation and smoothing, the domains of definition for the approximate solutions $u_{q,i}$ change at each step, here we choose it at step $q$ as $\cI^{q,i-1}=[-\tau_{q,i-1},T+\tau_{q,i-1}],$ where $\tau_{q,-1}=\tau_{q-1,5}$ and
$$
\tau_{q,i}=\left(\lambda_{q,i}\lambda_{q,i+1}\right)^{-\frac{1}{2}}\delta_{q}^{-\frac{1}{4}}, \quad q\geqslant 0,\ 0\leqslant i\leqslant 5.
$$\par
At each step, we give a correction to make the error $R_q$ get smaller which  converge to zero (in\ H\"{o}lder space) as $q$ goes to infinity. 
We assume the following inductive estimates on $(u_q,R_q)$ satisfying \eqref{approximation elstro dynamic}.
\begin{align}
	\|u_q\|_{0}, \|u_q\|_{1}, \|\partial_tu_q\|_{0}\leqslant \varepsilon-\delta_q^{\frac{1}{2}}, \quad\|\partial_t^ru_q\|_{N}\leqslant M\lambda_{q}^{N+r-1}\delta_{q}^{\frac{1}{2}}, \quad  2\leqslant N+r\leqslant 3, \label{est on u_q}
\end{align}
and
\begin{align}
	\|\partial_t^rR_q\|_{N}\leqslant \lambda_{q}^{N+r-2\gamma }\delta_{q+1}, \quad 0\leqslant N+r\leqslant 2, \label{est on R_q}
\end{align}
where $\|\cdot\|_N=\|\cdot\|_{C^0(\cI^{q,-1};C^N(\T^3))},$ $\gamma=\gamma(\beta)=\frac{1}{3}\left(\frac{1}{24} (1+12 \beta)-\sqrt{\frac{\beta-6 \beta^2}{6}}\right)<\frac{1}{36},$ $\varepsilon\ll 1,$ and $M=M(\lambda, \mu)>1$.\par 
Similar to \cite{DK22,GK22}, we give two inductive propositions.
\begin{pp}[Inductive proposition]\label{Inductive proposition}
	For any constants $\beta \in(0, \frac{1}{60}),\ \mu>0,$ and $\lambda+\mu>0,$ there exist constants  $\varepsilon_1=\varepsilon_1(\lambda, \mu)>0,$  $0<\gamma(\beta)<1/36,$ $\bar{b}(\beta) > 1,$ $M=M(\lambda, \mu)> 1,$ and $a^*_0=a^*_0(\beta, \bar{b}, \gamma, M) > 0$ such that the following property holds. Let $c_q=\sum_{j=q+1}^\infty\delta_j$ and $b=\bar{b},$ for any $a >a^*_0$ and $\varepsilon<\varepsilon_1,$ assume that $\left(u_q,c_q,R_q\right)$  is a solution of \eqref{approximation elstro dynamic} defined on the time interval $[-\tau_{q,-1}, T+{\tau_{q,-1}}]$ satisfying \eqref{est on u_q} and \eqref{est on R_q}. Then, we can find a corrected approximation solution $\left(u_{q+1}, c_{q+1}, R_{q+1}\right)$ which is defined on the time interval $[-\tau_{q+1,-1}, T+{\tau_{q+1,-1}}],$ satisfies \eqref{est on u_q}--\eqref{est on R_q} for $q+1$ and additionally
	\begin{equation}\label{Proposition of induction}
		\begin{aligned}
			\sum_{0\leqslant N+r\leqslant3}\lambda_{q+1}^{1-N-r}\|\partial_{t}^r(u_{q+1}-u_q)\|_{C^0\left([0, T] ; C^N\left(\mathbb{T}^3\right)\right)}\leqslant M \delta_{q+1}^{\frac{1}{2}}.
		\end{aligned}
	\end{equation}
\end{pp}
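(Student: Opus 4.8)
The plan is to carry out one step of the convex integration scheme, iterating the elementary ``add a high-frequency perturbation'' construction six times (once for each direction $f_i\in\Lambda^+$), as anticipated in Section \ref{Outline of the induction scheme}. Concretely, starting from $(u_q,c_q,R_q)=(u_{q,0},c_{q,0},R_{q,0})$ I would produce a finite sequence of intermediate tuples $(u_{q,i},c_{q,i},R_{q,i})$, $i=0,\dots,6$, each solving \eqref{approximation elstro dynamic} on the shrinking interval $\cI^{q,i-1}$, and set $(u_{q+1},c_{q+1},R_{q+1})=(u_{q,6},c_{q,6},R_{q,6})$. At stage $i$ one first mollifies $u_{q,i}$ and $R_{q,i}$ in space and time at scale $\ell_{q,i}$ (with $\ell_{q,i}$ chosen between $\lambda_{q,i}^{-1}$ and $\lambda_{q,i+1}^{-1}$) to gain control of high derivatives at the cost of a mollification error that is absorbed into the new Reynolds stress; one also truncates in time to the interval $\cI^{q,i}$. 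Then one defines the perturbation $\tilde u_{q,i+1}$ as a sum of a principal part — built from the building blocks of Lemma \ref{construction of building blocks}, a longitudinal wave of amplitude $O(1)$ carrying frequency $\lambda_{q,i+1}$ in direction $f_{i+1}$ together with a transverse companion of amplitude $O(\varepsilon)$, with amplitude coefficients $\Gamma_{f_{i+1}}\big((c_{q,i}/r_0)\Id-R_{q,i}^{(\mathrm{moll})}\big)^{1/2}$ dictated by \eqref{Geometric lemma 1} so that the low-frequency self-interaction of the new wave cancels $R_{q,i}^{(\mathrm{moll})}$ — plus several correctors (a temporal corrector, a divergence/pressure corrector, etc., ``five parts'' as stated) whose job is to ensure $\tilde u_{q,i+1}$ solves the linearized constant-coefficient wave equation to sufficient accuracy and to keep the system in the form \eqref{approximation elstro dynamic}.

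The estimates then split into three tasks. First, the estimate \eqref{est on u_q} for $u_{q,i+1}$: the $C^0$, $C^1$ and $\partial_t C^0$ bounds follow because the principal perturbation has amplitude $\lesssim \delta_{q,i+1}^{1/2}$ (comparable to $\delta_{q+1}^{1/2}$ up to the interpolated parameters) and because the initial data are kept at size $\varepsilon$ — here is where $\varepsilon<\varepsilon_1(\lambda,\mu)$ and the smallness depending on $\lambda+\mu$ enters, since the transverse/longitudinal amplitude ratio and the building-block construction of Lemma \ref{construction of building blocks} require $\lambda+\mu>0$; the higher-derivative bounds $\|\partial_t^r u_{q,i+1}\|_N\lesssim M\lambda_{q,i+1}^{N+r-1}\delta_{q,i+1}^{1/2}$ come from the fact that each space or time derivative of the perturbation costs a factor $\lambda_{q,i+1}$ (using that the amplitudes, built from mollified quantities, have derivatives bounded by powers of $\ell_{q,i}^{-1}\le\lambda_{q,i+1}$), with the absolute constant $M$ chosen large enough to dominate the geometric constants and the number of directions, independently of $a$. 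Telescoping the six stages gives \eqref{est on u_q} for $q+1$ and, summing the per-stage differences, the refined bound \eqref{Proposition of induction}.

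The second and main task is the estimate \eqref{est on R_q} for $R_{q+1}$. The new Reynolds stress $R_{q,i+1}$ is obtained by writing the error produced by $\tilde u_{q,i+1}$ — oscillation error, transport/wave error, mollification error, Nash-type error coming from the interaction of the perturbation with $\nabla u_{q,i}$, and the error from truncating in time — and applying the inverse-divergence operator (together with the geometric cancellation \eqref{Geometric lemma 1} for the main oscillation term and stationary-phase/commutator estimates for the rest) to put everything in the form $\Div(R_{q,i+1}-c\,\Id)$. Each piece must be shown to be $\lesssim \lambda_{q,i+1}^{N+r-2\gamma}\delta_{q,i+2}$ in the relevant norms; the delicate points are (i) the high–low frequency interaction terms created by the nonlinearity $\Div(\tr(\nabla u(\nabla u)^\top)\Id-(\nabla u)^\top\nabla u)$, which — as emphasized in the introduction — do not cancel when waves in different directions meet and is precisely the reason only one direction is corrected per stage, so that within a stage only one high-frequency family is present and these terms are controllably small; and (ii) getting the exponent $\gamma$ sharp enough that $\beta<1/60$ suffices, which is where the explicit formula $\gamma(\beta)=\frac13\big(\frac1{24}(1+12\beta)-\sqrt{(\beta-6\beta^2)/6}\big)<1/36$ and the precise choice of $\bar b(\beta)$, $\ell_{q,i}$ and $\tau_{q,i}$ are forced — one optimizes the competing powers of $\lambda_{q,i}$, $\lambda_{q,i+1}$, $\ell_{q,i}$ appearing in the oscillation error versus the mollification error. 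I expect this balancing of parameters in the Reynolds estimate, rather than the construction of the perturbation itself, to be the principal obstacle. Finally, $c_{q+1}=\sum_{j\ge q+2}\delta_j$ is recovered because at each stage the trace/pressure part removed is exactly $\delta_{q,i+1}$-sized, and one verifies the domains shrink only from $[-\tau_{q,-1},T+\tau_{q,-1}]$ to $[-\tau_{q+1,-1},T+\tau_{q+1,-1}]$ as claimed; taking $a>a^*_0(\beta,\bar b,\gamma,M)$ large makes every implicit constant beatable by the gained powers of $\lambda_{q+1}$.
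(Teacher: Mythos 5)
Your overall architecture is the paper's: six one-direction stages with building blocks from Lemma \ref{construction of building blocks}, space-time mollification, a temporal corrector, the inverse divergence operator for the error, $M$ fixed independently of $a$, and telescoping of the six per-stage perturbation bounds to get \eqref{Proposition of induction} and \eqref{est on u_q} at level $q+1$. However, the way you describe the key cancellation step would fail. You assert that at stage $i$ the low-frequency self-interaction of the single wave family in direction $f_{i+1}$ cancels the whole (re-mollified) stress $R_{q,i}$. It cannot: a one-direction family produces a low-frequency stress proportional to the single matrix $\Id-\frac{f_{i+1}}{|f_{i+1}|}\otimes\frac{f_{i+1}}{|f_{i+1}|}$, so it can only remove the $f_{i+1}$-component of the six-term decomposition \eqref{Geometric lemma 1}, never a general symmetric $R$. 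In the paper the amplitudes of \emph{all six} stages are built from one fixed mollification $R_\ell=\ULo\PLo R_q$ of the original stress (only $u_{q,i}$ is re-mollified at scale $\ell_{q,i}$; $c_q$ is unchanged during the sub-steps); each stage \emph{adds} the component $(\delta_{q+1}^{1/2}\Gamma_{f_i}(\Id-\delta_{q+1}^{-1}R_\ell))^2\bigl(\Id-\frac{f_i}{|f_i|}\otimes\frac{f_i}{|f_i|}\bigr)$ to the stress, and only after all six stages, when $\delta_{q+1}\Id$ is shifted into $c_{q+1}=c_q-\delta_{q+1}$, does identity \eqref{Geometric lemma 1} collapse the sum so that $R_{q+1}=R_{q,6}-\delta_{q+1}\Id=R_q-R_\ell+\sum_{i=1}^6\delta R_{q,i}$, which is then estimated by \eqref{est on delta R_q i+1}--\eqref{est on R_q-R_l}.

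Concretely, if you re-mollify and re-decompose the running stress $R_{q,i}$ at each stage as you propose, the intermediate stresses contain the previously added components, which are of size $\delta_{q+1}$ — far above the admissible $\lambda_{q+1}^{N+r-2\gamma}\delta_{q+2}$ — and no single-direction wave can remove them; the exact telescoping to $R_q-R_\ell$ is destroyed, and the inductive bound \eqref{est on R_q} at level $q+1$ cannot close. So the missing ingredient is precisely the bookkeeping of Proposition \ref{prop of Reynolds error}: one fixed $R_\ell$, per-stage cancellation of only one component of \eqref{Geometric lemma 1}, and the transfer of $\delta_{q+1}\Id$ into the constant $c$ at the very end. (Minor point: there is no divergence/pressure corrector here; besides the principal part, the only corrector is the spatially constant $\tu_{q,i+1,t}(t)$ fixing the spatial mean of $\partial_{tt}\tu_{q,i+1}$.)
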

\begin{pp}[Bifurcating inductive proposition]\label{Bifurcating inductive proposition}
	Let the constants $\lambda,$ $\mu,$ parameters  $\varepsilon_1,$ $\beta,$ $M,$ $\bar{b},\gamma,$ and $a^*_0,$ and the tuple $\left(u_q, c_q, R_q\right)$ be given as in the statement of Proposition \ref{Inductive proposition}. For any time interval $\cI \subset(0, T)$ which satisfies $|\cI| \geqslant 3 \tau_{q,-1},$ we can produce two different tuples $\left(u_{q+1}, c_{q+1}, R_{q+1} \right)$ and $\left(\overline{u}_{q+1}, c_{q+1}, \overline{R}_{q+1}\right)$ which share the same initial data, satisfy the same conclusions of Proposition \ref{Inductive proposition}  and additionally
	\begin{equation}\label{Bifurcating proposition}
		\begin{aligned}
			\|\overline{u}_{q+1}-u_{q+1}\|_{C^0\left([0, T] ; L^2\left(\mathbb{T}^3\right)\right)} \geqslant (16\lambda_{q+1})^{-1}\pi^{\frac{3}{2}} \delta_{q+1}^{\frac{1}{2}}, \quad \supp_t\left(\overline{u}_{q+1}-u_{q+1}\right) \subset \cI.
		\end{aligned}
	\end{equation}
	Furthermore, if we are given two tuples $\left(u_q, c_q, R_q\right)$ and $\left(\overline{u}_q, c_q, \overline{R}_q\right)$ satisfying \eqref{est on u_q}--\eqref{est on R_q}, there exists some interval $\cJ \subset(0, T)$ satisfies
	\begin{align}
		\supp_t\left(u_q-\overline{u}_q, R_q-\overline{R}_q\right) \subset \mathcal{J}, \label{supp of diff at q step}
	\end{align}
	and we can exhibit two different tuples $\left(u_{q+1}, c_{q+1}, R_{q+1}\right)$ and $\left(\overline{u}_{q+1}, c_{q+1},  \overline{R}_{q+1}\right)$ satisfying the same conclusions of Proposition \ref{Inductive proposition}. Moreover, the support of their difference satisfies
	\begin{equation}
		\supp_t\left(u_{q+1}-\overline{u}_{q+1}, R_{q+1}-\overline{R}_{q+1}\right) \subset \mathcal{J}+\left(\lambda_q \delta_q^{\frac{1}{4}}\right)^{-1}.\label{supp of diff at q+1 step}
	\end{equation}
\end{pp}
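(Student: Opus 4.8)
## Proof proposal for Proposition \ref{Bifurcating inductive proposition}

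The plan is to build both tuples by a single application of the convex integration step of Proposition \ref{Inductive proposition}, differing only in a controlled modification of one of the building-block amplitudes on a prescribed time window. Recall that the perturbation in Proposition \ref{Inductive proposition} is assembled through six sub-steps (one for each $f_i\in\Lambda^+$), each adding a high-frequency wave of the form $\Gamma_{f_i}(\cdot)$ times an oscillatory profile of frequency $\LL$. For the first claim, I would fix the interval $\cI$ and choose a smooth temporal cutoff $\chi$ supported in $\cI$ with $\chi\equiv 1$ on the middle third of $\cI$; this is where $|\cI|\ge 3\tau_{q,-1}$ is used, so that the cutoff varies on the admissible time scale $\tau_{q,-1}$ and does not spoil the transport/material-derivative estimates. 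The construction of $\overline u_{q+1}$ then runs exactly as that of $u_{q+1}$, except that in (say) the first sub-step one replaces the amplitude $a_{f_1}$ by $(1+\kappa\chi(t))\,a_{f_1}$ for a fixed small $\kappa$, or equivalently adds an extra phase-shifted copy of the building block; outside $\cI$ nothing changes, so the two tuples share the same initial data and $\supp_t(\overline u_{q+1}-u_{q+1})\subset\cI$. The new Reynolds error $\overline R_{q+1}$ is defined by the same formulas of Section \ref{Definition of the new errors} with the modified amplitude, and the estimates of Section \ref{Estimates on the new Reynolds error} go through verbatim because $\kappa$ and $\|\chi\|$-type quantities are absorbed into the implicit constants (this forces $a^*_0$ possibly larger, but it may be chosen uniformly). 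The quantitative lower bound in \eqref{Bifurcating proposition} comes from the fact that $\overline u_{q+1}-u_{q+1}$ is, to leading order, $\kappa\chi(t)$ times a single building block of amplitude $\gtrsim\delta_{q+1}^{1/2}$ and frequency $\LL$; computing its $L^2$ norm on the time slab where $\chi\equiv1$ and using that the high-frequency profile has unit-order $L^2$ mass on $\T^3$ gives a bound of the shape $c\,\lambda_{q+1}^{-1}\delta_{q+1}^{1/2}$, and the explicit constant $(16)^{-1}\pi^{3/2}$ is arranged by fixing $\kappa$ appropriately and noting $|\T^3|=(2\pi)^3$; the $\lambda_{q+1}^{-1}$ loss is the usual cost of the antidivergence used to correct the low-frequency part of the new block. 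One must check that this lower bound survives the addition of the corrector terms and the remaining five sub-steps, which it does because those contribute either higher frequencies (orthogonal in $L^2$ up to negligible errors) or terms of size $O(\varepsilon)$ smaller.

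For the second claim I would argue by propagation of support. Given $(u_q,c_q,R_q)$ and $(\overline u_q,c_q,\overline R_q)$ agreeing outside $\cJ$, I first note that the cutoffs, mollifications (at spatial scale $\ell$ and temporal scale $\tau_{q,-1}\sim(\lambda_q\delta_q^{1/4})^{-1}$ up to the parameter choices) and the material-derivative flow used to define the perturbation all have finite speed/finite range $\lesssim(\lambda_q\delta_q^{1/4})^{-1}$ in time; hence the mollified tuple $(u_\ell,R_\ell)$ and its barred counterpart still agree outside $\cJ+(\lambda_q\delta_q^{1/4})^{-1}$. Since the amplitudes $\Gamma_{f_i}$, the stress decomposition via \eqref{Geometric lemma 1}, and the flow maps are all constructed pointwise (in a neighborhood of size $(\lambda_q\delta_q^{1/4})^{-1}$) from these mollified quantities, the resulting perturbations coincide there, and therefore so do $u_{q+1}$ and $\overline u_{q+1}$ and the new errors $R_{q+1},\overline R_{q+1}$. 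This gives \eqref{supp of diff at q+1 step}. Here one uses \eqref{supp of diff at q step}, which is itself obtained by the same finite-speed reasoning at the previous level (or is part of the inductive hypothesis propagated from the first claim).

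The main obstacle is the quantitative lower bound \eqref{Bifurcating proposition}: one must ensure that the explicit $L^2$ gap between the two solutions is not destroyed by the many correction terms (the divergence correctors, the time correctors, the lower-order building blocks of size $O(\varepsilon)$, and the five subsequent sub-steps), and that the constant degradation is no worse than the stated $(16\lambda_{q+1})^{-1}\pi^{3/2}$. This requires isolating the principal term of $\overline u_{q+1}-u_{q+1}$, bounding all error terms by a small fraction of it using the estimates of Section \ref{Estimates on the new Reynolds error} together with the smallness of $\varepsilon$ and the largeness of $a$, and being slightly careful that the bifurcation cutoff $\chi$ does not interact badly with the temporal support constraints $\tau_{q,i}$ of the six sub-steps — which is exactly what the hypothesis $|\cI|\ge 3\tau_{q,-1}$ guarantees. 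A secondary technical point is verifying that all constants ($\varepsilon_1$, $M$, $\bar b$, $\gamma$, $a^*_0$) can be kept the same as in Proposition \ref{Inductive proposition}, i.e. that the bifurcation modification is a genuinely lower-order perturbation of the scheme; this is the reason the statement is phrased with the constants "given as in" that proposition.
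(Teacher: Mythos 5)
There is a genuine gap in the bifurcation mechanism. Your modification multiplies one amplitude by $(1+\kappa\chi(t))$ with a fixed $\kappa$ (or, equivalently, superposes a phase-shifted copy). But in this scheme the low-frequency self-interaction of the perturbation, namely the term $(\delta_{q+1}^{\frac12}\Gamma_{f_{i+1}}(\Id-\delta_{q+1}^{-1}R_\ell))^2\bigl(\Id-\tfrac{f_{i+1}}{|f_{i+1}|}\otimes\tfrac{f_{i+1}}{|f_{i+1}|}\bigr)$, is exactly what cancels the corresponding piece of the mollified Reynolds stress through the decomposition \eqref{Geometric lemma 1}. Rescaling the amplitude by $1+\kappa\chi$ changes this quadratic term by $((1+\kappa\chi)^2-1)\Gamma_{f_i}^2(\cdots)$, i.e.\ by an uncontrolled error of size $\sim\kappa\,\delta_{q+1}$ on the bifurcation window. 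The inductive requirement \eqref{est on R_q} at level $q+1$ only tolerates errors of size $\lambda_{q+1}^{-2\gamma}\delta_{q+2}\ll\delta_{q+1}$, so this cannot be "absorbed into the implicit constants" or repaired by enlarging $a^*_0$; and if you shrink $\kappa$ to restore the Reynolds estimate, the $L^2$ separation drops below the stated $(16\lambda_{q+1})^{-1}\pi^{\frac32}\delta_{q+1}^{\frac12}$. The paper's construction avoids this entirely by flipping the \emph{sign}, $\Gamma_{f_6}\mapsto-\Gamma_{f_6}$, on the blocks with a chosen time index $s_0$ (using the existing cutoffs $\theta_{s_0}(\tau_{q,5}^{-1}\cdot)$, whose support fits inside $\cI$ precisely because $|\cI|\geqslant 3\tau_{q,-1}$): since $(-\Gamma_{f_6})^2=\Gamma_{f_6}^2$, the cancellation term and all estimates are literally unchanged, while the perturbation itself changes sign there, and the lower bound follows from an explicit computation of $\int_{\T^3}|\tu_{q,6,\mathrm{new}}-\tu_{q,6}|^2\,\rd x$ in which the oscillatory cross terms and the spatial-average (time-corrector) contributions are shown small by nonstationary phase (Lemma \ref{est on int operator}). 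This sign-preserving-the-square idea is the essential point your proposal is missing.

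A secondary problem is that you modify the \emph{first} sub-step: since each later perturbation $\tu_{q,i+1}$ is built from the mollified $u_{q,i}$ (through $A_I$, $a_{A_I,2}$, $a_{A_I,3}$, $c_{A_I}$), altering sub-step $1$ makes all five subsequent sub-steps differ as well, so $\overline u_{q+1}-u_{q+1}$ is no longer a single explicit term and your claim that the remaining sub-steps contribute only "negligible" corrections would itself require proof. The paper sidesteps this by modifying the \emph{last} sub-step, so that the two constructions coincide up to $u_{q,5}$ and the difference is exactly the sign-flipped blocks of $\tu_{q,6}$. By contrast, your second part (support propagation) is essentially the paper's argument: the only coupling to the previous tuple is through the mollifications at scales $\ell_{q,i}\leqslant\ell_{q,0}\leqslant(\lambda_q\delta_q^{\frac14})^{-1}$, and the construction is local in the mollified data, which yields \eqref{supp of diff at q+1 step}; just note there is no flow map in this scheme, only mollifier spreading.
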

In more detail, at each step of the iteration, we will add different high frequency waves to the approximate solution for six times. Let $u_{q,0}=u_q$ and $R_{q,0}=R_q,$ if we add the correction $\tu_{q,i}$ (with frequency $\lambda_{q,i}$) to $u_{q,i-1}=u_q+\sum_{r=1}^{i-1}\tu_{q,r},$ $i=1,2, \cdots,6,$ the new stress error $R_{q,i}$ takes the form 
\begin{align*}
	R_{q,i}=R_{q,i-1}+(\delta_{q+1}^{\frac{1}{2}}\Gamma_{f_i}(\Id-\delta_{q+1}^{-1}R_{q}))^2\left(\Id-\frac{f_i}{|f_i|}\otimes \frac{f_i}{|f_i|}\right)+\delta R_{q,i}, \quad i\in \{1,2, \cdots,6\},
\end{align*}
where $\delta R_{q,i}$ is a smaller correction. Let $u_{q+1}=u_{q,6}$ and $R_{q+1}=R_{q,6}-\delta_{q+1}\Id$ be the new approximate solution
and the new Reynolds error at  $q+1$ step. Repeating this procedure, the Reynolds errors will converge to zero. Moreover, we assume the following inductive estimates on $(u_{q,i},R_{q,i})$ satisfying \eqref{approximation elstro dynamic}.
\begin{align}
	\|u_{q,i}\|_{0}, \|u_{q,i}\|_{1}, \|\partial_tu_{q,i}\|_{0}\leqslant \varepsilon-(7-i)\delta_{q+1}^{\frac{1}{2}}, \quad\|\partial_t^ru_{q,i}\|_{N}\leqslant M\lambda_{q,i}^{N+r-1}\delta_{q+1}^{\frac{1}{2}}, \quad 2\leqslant N+r\leqslant 3, \label{est on u_qi}
\end{align}
and
\begin{align}
	\|\partial_t^r\delta R_{q,i}\|_{N}\leqslant \lambda_{q,i}^{N+r-2\gamma }\delta_{q+2}, \quad 0\leqslant N+r\leqslant 2, \label{est on R_qi}
\end{align}
where $1\leqslant i\leqslant 6$ and $\|\cdot\|_N=\|\cdot\|_{C^0(\cI^{q,i-1};C^N(\T^3))}$.\par 

The proof for Theorem \ref{thm 1}  relies on the above two important propositions. Proposition \ref{Inductive proposition} provides an iterative framework for constructing a sequence of approximate solutions to \eqref{system}. Specifically, it states that given an initial tuple $(u_0, c_0, R_0),$ we can generate a sequence of tuples $(u_q, c_q, R_q)$ that satisfy \eqref{est on u_q}, \eqref{est on R_q}, and \eqref{Proposition of induction} for all $q \geq 0$. Consequently, we can demonstrate that $u_q$ converges to $u,$ which is a weak solution of \eqref{system} in $C^{1,\alpha}([0,T] \times \T^3),$ with $0<\alpha < \frac{1}{60}$. Furthermore, Proposition \ref{Bifurcating inductive proposition} facilitates the construction of different sequences of tuples that converge to distinct solutions, thereby enabling the construction of infinitely many solutions.\par
\section{Construction of the starting tuple and the perturbation}\label{Construction of the starting tuple and the perturbation} 
\subsection{Construction of the starting tuple}
\label{Construction of the starting tuple}
We will first introduce the construction of the starting tuple. Let
\begin{align*}
	u_0=\frac{\varepsilon\delta_1^{\frac{1}{2}}}{2\lambda_0^{2}|f_1|^2}f_1\left(e^{i\lambda_0(f_1\cdot x-(\lambda+2\mu)^{\frac{1}{2}}|f_1|t)}+e^{-i\lambda_0(f_1\cdot x-(\lambda+2\mu)^{\frac{1}{2}}|f_1|t)}\right),
\end{align*}
which satisfies
\begin{align*}
	\partial_{tt}u_0-\mu\Delta u_0-(\lambda+\mu)\nabla\Div u_0+\Div(\tr(\nabla u_0(\nabla u_0)^{\top})\Id-(\nabla u_0)^\top\nabla u_0)=0.
\end{align*}
So we can choose the Reynolds error $R_0 = 0$. It is straightforward to show that $(u_0, c_0, R_0)$ satisfies \eqref{est on u_q} and \eqref{est on R_q}.
\subsection{Mollification}\label{Mollification}
To solve the loss of temporal and spatial derivatives, in this part, we will show the mollification process for $ (u_{q,i},R_{q,i}) $ at the $q_{th}$ step.\par 
First, we introduce some notation in Fourier analysis described in \cite{GK22} and \cite{Gra08}. We can define the Fourier transform and its inverse of a function $f$ in Schwartz space $\mathcal{S}(\R^3)$ as
$$
\hat{f}(\xi)=\frac{1}{(2\pi)^3}\int_{\R^3}f(x)e^{-ix\cdot\xi}\rd x, \qquad\check{f}(x)=\int_{\R^3}f(\xi)e^{ix\cdot\xi}\rd\xi.
$$
Moreover, the Fourier transform can be extended to linear functionals in $\mathcal{S}^\prime(\R^3)$ which is the dual space of $\mathcal{S}(\R^3)$. Following \cite{DK22,GK22}, we can multiply the Fourier transform of $f$ by a smooth cut-off function, apply the inverse Fourier transform, and obtain a smooth function which is the standard convention for Littlewood-Paley operators. Let $\phi(\xi)$ be a radial smooth function such that $\text{supp}\phi(\xi)\subset B(0,2)$ and $\phi\equiv1$ on $\overline{B(0,1)}$. Then, for any $j\in\Z$ and distribution $f$ on $\R^3,$ we can define
$$
\widehat{P_{\leqslant2^j}f}(\xi):=\phi\left(\frac{\xi}{2^j}\right)\hat{f}(\xi), \qquad\widehat{P_{>2^j}f}(\xi):=\left(1-\phi\left(\frac{\xi}{2^j}\right)\right)\hat{f}(\xi), \qquad
\widehat{P_{2^j}f}(\xi):=\left(\phi\left(\frac{\xi}{2^j}\right)-\phi\left(\frac{\xi}{2^{j-1}}\right)\right)\hat{f}(\xi).
$$
For a given number $a,$ we define $P_{\leqslant a}=P_{\leqslant 2^J}$ and $P_{> a}f=f-P_{\leqslant a}f,$ where $J=\lfloor\log_2a\rfloor$ is the largest integer which satisfies $2^J\leqslant a$. For $\ell>0,$ if $f$ is a spatially periodic function on $[c,d]\times\T^3,$ $\PL f$ can be written as the space convolution of $f$ with kernel $\check{\phi}_\ell(\cdot):=2^{3J}\check{\phi}(2^J\cdot),$ where $J=\lfloor-\log_2\ell\rfloor$ is the largest integer which satisfies $2^J\leqslant \ell^{-1},$ and it is also a spatially periodic function on $[c,d]\times\T^3$. More details can be found in \cite{DK22,GK22,Gra08}. Finally, we present an important inequality, for $k\in \N,$ 
\begin{align}
	\int_{\R^3}|y|^k|\check{\phi}_\ell(y)| \rd  y=2^{-kJ}\int_{\R^3}|y|^k|\check{\phi}(y)| \rd  y\lesssim_k\ell^k. \label{est on  mollification function}
\end{align}
We can also define the mollification in time. Following mollification method in \cite{NV23} and Definition 4.15 in \cite{GKN23}, we could define $\phi^t\in C_c^\infty(\R)$ which satisfies $\text{supp}\phi^t\subset (-1,1)$ and
\begin{equation}
	\int_{\R}\phi^t(\tau)\rd\tau=1, \quad \int_{\R}\phi^t(\tau)\tau^n\rd\tau=0, \quad \forall n=1,2, \cdots,n_0+3,
\end{equation} 
where $ n_0= \left\lceil \frac{12b(1+2\gamma)}{b-1+6\beta} \right\rceil $. 
 Let $\phi^t_\delta(\tau)=\delta^{-1}\phi^t(\delta^{-1}\tau),$ then we  define $\UL f=\phi^t_\ell\ast f$ and $\UG f=f-\UL f$.
\begin{Rmk}
Here, $b$ and $\gamma$ is actually a function of $\beta$. So we could consider $n_0$ as a function of $\beta$.
\end{Rmk}
\begin{Rmk}
If we set $\ell=0$ when using $\PL$ or $\UL,$ we mean $\PL f=f$ and $\UL f= f$.
\end{Rmk}
Next, we introduce the parameter $\ell_{q,i},$ defined by 
\begin{equation}\label{def of l}
	\ell_{q,i}=\left(\lambda_{q,i}^{\frac{1}{2}} \lambda_{q,i+1}^{\frac{1}{2}}\right)^{-1}\delta_{q}^{-\frac{1}{4}}, \quad 0\leqslant i\leqslant 5,
\end{equation} 
and give the regularized terms as 
\begin{align*}
u_{\ell,i}=\ULq\PLq u_{q,i}, \quad R_{\ell}=\ULo\PLo R_{q}, \quad 0\leqslant i\leqslant 5,
\end{align*}
which can be defined on $\cI^{q,i}+3\ell_{q,i}\subset\cI^{q,i-1}$ by the selection of sufficiently large $a$. The following estimates are direct conclusions.
\begin{pp}\label{est on Mollification}
For any $0<\beta<\frac{1}{6}$ and $b>1+6\beta,$ we can find $a^*_1=a^*_1(\beta,b, \gamma,M )>0$ satisfying that if $a>a^*_1,$ the following properties hold,
\begin{align}
	\|\partial_t^ru_{\ell,i}\|_N&\lesssim \varepsilon,&& N+r\leqslant 1,&& 0\leqslant i\leqslant 5, \label{est on u_li 0}\\
	\|\partial_t^ru_{\ell,i}\|_N&\lesssim_{N,r}\ell_{q,i}^{2-N-r}M\lambda_{q,i}\delta_{q,i}^{\frac{1}{2}},&& N+r\geqslant2,&& 0\leqslant i\leqslant 5, \label{est on u_li 1}\\
	\|\partial_t^rR_{\ell}\|_N&\lesssim_{N,r}\ell_{q,0}^{1-N-r}\lambda_{q}^{1-2\gamma }\delta_{q+1},&& N+r\geqslant1, \label{est on R_li}\\
	\|\partial_t^r(u_{q,i}-u_{\ell,i})\|_N&\lesssim_{N,r}\ell_{q,i}^{3-N-r}M\lambda_{q,i}^2\delta_{q,i}^{\frac{1}{2}},&& 0\leqslant N+r\leqslant3,&&0\leqslant i\leqslant 5\label{est on u_qi-u_li}, \\
	\|\partial_t^r(R_{q}-R_{\ell})\|_N&\lesssim_{N,r}\ell_{q,0}^{2-N-r}\lambda_{q}^{2-2\gamma }\delta_{q+1},&& 0\leqslant N+r\leqslant2\label{est on R_qi-R_li},
\end{align}
where
$\|\cdot\|_N=\|\cdot\|_{C^0(\cI_{3\ell_{q,i}}^{q,i};C^N(\T^3))}$, and 
\begin{align}
	\delta_{q,i}=\left\lbrace\begin{aligned}
		&\delta_{q},&& i=0,\\
		&\delta_{q+1},&& i=1,\cdots,5.\\
	\end{aligned}\right.
\end{align}
\begin{proof}
Notice that for $0<\beta<\frac{1}{6}$ and $b>1+6\beta,$ we could find $a^*_1=a^*_1(\beta,b, \gamma,M )$ such that for any $a>a^*_1,$
\begin{align}
	\lambda_{q}\delta_{q}^{\frac{1}{2}}\leqslant\lambda_{q,1}\delta_{q+1}^{\frac{1}{2}}, \quad \tau_{q,i}+3\ell_{q,i}\leqslant \tau_{q,i-1}, \quad \ell_{q,i}M\lambda_{q,i}\leqslant 1, \quad  \text{and}\quad \lambda_{q}^{-2\gamma }<r_0,\quad 0\leqslant i\leqslant 5.	\label{pp of Lambda1}	
\end{align}
By using \eqref{est on u_qi}, \eqref{est on R_qi},  and  the definition of $\PLq$ and $\ULq,$ we obtain \eqref{est on u_li 0}--\eqref{est on R_li}. Next, we calculate
$$
\begin{aligned}
	F-\ULq\PLq F=F-\PLq F+\PLq F-\ULq\PLq F=\PGq F+\UGq\PLq F,
\end{aligned}
$$ 
and use Bernstein's inequality and $|f(t-\tau)-f(t)-\sum_{i=1}^{j-1}\partial_t^if(t)(-\tau)^i|\lesssim\ell^j\|\partial_t^jf\|_{C^0([t-\ell,t+\ell])}$ for $|\tau|\leqslant\ell$ to get
\begin{align}
	\|\PGq F\|_{C^0}\lesssim \ell_{q,i}^j\|\nabla^{j}F\|_{C^0}, \quad
	\|\UGq F\|_{C^0}\lesssim \ell_{q,i}^j\|\partial_{t}^jF\|_{C^0}, \label{Bernstein inequality}
\end{align}
for $\forall F\in  C^j(\cI^{q,i-1}\times\T^3),j=0,1,2, \cdots,n_0+3$. Combining them, we have for $N+r\leqslant \overline{N},$
\begin{equation}
	\begin{aligned}
		\|\partial_{t}^r(F-\ULq\PLq F)\|_{N}&\lesssim\ell_{q,i}^{\overline{N}-N-r}\sum_{N_1+N_2=\overline{N}}\|\partial_{t}^{N_1}\nabla^{N_2}F\|_{C^0(\cI^{q,i-1}\times\T^3)}.
	\end{aligned}\notag
\end{equation}
We can apply it to $u_{q,i}$  and $R_{q,i},$ with $\overline{N}=3$ and $\overline{N}=2,$ to get \eqref{est on u_qi-u_li}--\eqref{est on R_qi-R_li}.
\end{proof}
\end{pp}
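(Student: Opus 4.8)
The plan is to derive all five families of estimates from two elementary facts about the truncations $\PLq$ and the temporal mollifiers $\ULq$: (i) their convolution kernels have $L^1$-norms independent of the scale, so both operators are bounded --- uniformly in the scale --- on every $C^0$-in-time, $C^N$-in-space space, and they commute with $\partial_t$ and $\nabla$; and (ii) they obey the standard Bernstein estimates: a frequency-annihilating piece ($\PGq$ or $\UGq$) that swallows $j$ derivatives gains a factor $\ell_{q,i}^{\,j}$ (in $\nabla$ for the spatial operators, in $\partial_t$ for the temporal ones), while pushing an extra derivative through $\PLq$ or $\ULq$ costs at most $\ell_{q,i}^{-1}$. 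Before anything else I would enlarge $a^*_1$ so as to secure the handful of numerical inequalities these estimates will be measured against, principally: $\lambda_q\delta_q^{1/2}\leqslant\lambda_{q,1}\delta_{q+1}^{1/2}$ (so that the $i=0$ and $i\geqslant1$ regimes of $\delta_{q,i}$ are compatible), $\tau_{q,i}+3\ell_{q,i}\leqslant\tau_{q,i-1}$ (so that $u_{\ell,i}$ and $R_\ell$ are genuinely defined on the enlarged interval $\cI^{q,i}+3\ell_{q,i}$: mollification in $x$ is domain-harmless, mollification in $t$ costs a collar of width $\ell_{q,i}$), $M\lambda_{q,i}\ell_{q,i}\leqslant1$, and $\lambda_q^{-2\gamma}<r_0$. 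The one that matters is $\lambda_{q,i}\ell_{q,i}\leqslant1$: from $\ell_{q,i}=(\lambda_{q,i}\lambda_{q,i+1})^{-1/2}\delta_q^{-1/4}$, $\lambda_{q,i+1}/\lambda_{q,i}\approx\lambda_q^{(b-1)/6}$ and $\delta_q^{-1/4}=\lambda_q^{\beta/2}$ one computes $\lambda_{q,i}\ell_{q,i}\approx\lambda_q^{\beta/2-(b-1)/12}$, which is a \emph{negative} power of $\lambda_q$ precisely because $b>1+6\beta$; so for $a$ (hence $\lambda_0$) large the fixed factor $M$ is absorbed as well.

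For the ``direct'' estimates \eqref{est on u_li 0}--\eqref{est on R_li}: when $N+r\leqslant1$ one simply has $\|\partial_t^r u_{\ell,i}\|_N\lesssim\|\partial_t^r u_{q,i}\|_N\lesssim\varepsilon$ by \eqref{est on u_q}, \eqref{est on u_qi} and boundedness of $\PLq,\ULq$; when $2\leqslant N+r\leqslant3$, \eqref{est on u_q}, \eqref{est on u_qi} give $\|\partial_t^r u_{q,i}\|_N\lesssim M\lambda_{q,i}^{N+r-1}\delta_{q,i}^{1/2}$, which is $\leqslant\ell_{q,i}^{2-N-r}M\lambda_{q,i}\delta_{q,i}^{1/2}$ thanks to $\lambda_{q,i}\ell_{q,i}\leqslant1$; and when $N+r\geqslant4$ one first spends $\ell_{q,i}^{-(N+r-3)}$ through the losing Bernstein estimate to land on a total-order-$3$ quantity and argues as above. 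The identical scheme with \eqref{est on R_q} in place of \eqref{est on u_q} gives \eqref{est on R_li}. (Replacing $\delta_{q,i}$ by $\delta_q$ at $i=0$ and by $\delta_{q+1}$ at $i\geqslant1$ is legitimate because $u_{q,0}=u_q$, $\lambda_{q,0}=\lambda_q$, and \eqref{est on u_q}--\eqref{est on u_qi} are phrased exactly so.)

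For the difference estimates \eqref{est on u_qi-u_li}--\eqref{est on R_qi-R_li} I would split $F-\ULq\PLq F=\PGq F+\UGq\PLq F$ into a spatial high-frequency error and a temporal one. For the spatial piece, write $\PGq F(x)=\int\check\phi_\ell(y)\bigl(F(x)-F(x-y)\bigr)\,\rd y$ (using $\int\check\phi_\ell=1$), Taylor-expand $F(x-y)$ to order $j$, and exploit that $\int\check\phi_\ell(y)y^\alpha\,\rd y=0$ for $1\leqslant|\alpha|\leqslant j-1$ (since $\phi\equiv1$ near $0$), so that only the order-$j$ remainder survives; \eqref{est on  mollification function} then bounds it by $\ell_{q,i}^{\,j}\|\nabla^j F\|_{C^0}$. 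The temporal piece is handled verbatim in $t$, now invoking the vanishing-moment conditions $\int\phi^t(\tau)\tau^n\,\rd\tau=0$ for $1\leqslant n\leqslant n_0+3$ (far more than are needed here, $j\leqslant3$). Commuting $\partial_t^r\nabla^N$ past these Fourier multipliers and choosing $j=\overline N-N-r$ yields $\|\partial_t^r(F-\ULq\PLq F)\|_N\lesssim\ell_{q,i}^{\,\overline N-N-r}\sum_{N_1+N_2=\overline N}\|\partial_t^{N_1}\nabla^{N_2}F\|_{C^0(\cI^{q,i-1}\times\T^3)}$; taking $\overline N=3$, $F=u_{q,i}$ and feeding in \eqref{est on u_q}, \eqref{est on u_qi} at total order $3$ (value $M\lambda_{q,i}^2\delta_{q,i}^{1/2}$) gives \eqref{est on u_qi-u_li}, while $\overline N=2$, $F=R_q$ with \eqref{est on R_q} at order $2$ (value $\lambda_q^{2-2\gamma}\delta_{q+1}$) gives \eqref{est on R_qi-R_li}. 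I do not expect a real obstacle here: the Fourier analysis is completely routine, and the only thing to watch is the bookkeeping --- keeping each norm on its correct enlarged interval and checking, every time a top-order derivative is paid for with $\ell_{q,i}^{-1}$, that it is beaten by the corresponding $\lambda_{q,i}^{-1}$-type gain, which is nothing more than the arithmetic packaged into $b>1+6\beta$.
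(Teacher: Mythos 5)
Your proposal is correct and follows essentially the same route as the paper: the same parameter inequalities (including $M\lambda_{q,i}\ell_{q,i}\leqslant1$ from $b>1+6\beta$), direct bounds via uniform boundedness of $\PLq,\ULq$ plus Bernstein for the low-order estimates, and the splitting $F-\ULq\PLq F=\PGq F+\UGq\PLq F$ with moment-cancellation/Taylor arguments for the difference estimates, applied with $\overline N=3$ to $u_{q,i}$ and $\overline N=2$ to $R_q$. You merely spell out the kernel-level details and the $N+r\geqslant4$ case more explicitly than the paper does.
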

\subsection{Cutoffs}
Here, we will give partitions of unity in space $\R^3$ and in time $\R$. We introduce some nonnegative smooth functions $\left\lbrace\chi_\upsilon\right\rbrace_{\upsilon\in\Z^3} $ and $\left\lbrace\theta_s\right\rbrace_{s\in\Z} $ such that
$$\sum_{\upsilon\in\Z^3}\chi_\upsilon^2(x)=1, \quad \forall x\in\R^3, \quad\sum_{s\in\Z}\theta_s^2(t)=1, \quad\forall t\in\R,$$
where $\chi_\upsilon(x)=\chi_0(x-2\pi\upsilon)$ and $\chi_0$ is a nonnegative smooth function supported in $Q(0,5/4\pi)$ satisfying $\chi_0=1$ on $\overline{Q(0,3/4\pi)},$ and $Q(x,r)$ denotes the cube $\left\lbrace y\in\R^3:|y-x|_\infty<r\right\rbrace $. Similarly, $\theta_s(t)=\theta_0(t-s)$ where $\theta_0\in C_c^\infty(\R)$ satisfies $\theta_0=1$ on $[1/4,3/4]$ and $\theta_0=0$ on $(-1/4,5/4)^c$. And then, we  give the cut-off parameters $\tau_{q,i}$ and $\mu_{q,i}$ with $\tau_{q,i}^{-1}>0$ and $\mu_{q,i}^{-1}\in 2\Z_+,$ which are given by 
\begin{align}
	\mu_{q,i}^{-1}=2\lceil(\lambda_{q,i}\lambda_{q,i+1})^{\frac{1}{2}}\delta_{q}^{\frac{1}{4}}/2\rceil, \quad \tau_{q,i}^{-1}=(\lambda_{q,i}\lambda_{q,i+1})^{\frac{1}{2}}\delta_{q}^{\frac{1}{4}},\quad 0\leqslant i\leqslant 5. \label{def of mu tau}
\end{align}
Next, we introduce the following notations
\begin{align*}
\mathscr{I}&:=\left\lbrace(s, \upsilon):(s, \upsilon)\in\Z\times\Z^3\right\rbrace,
\end{align*}
and for $I=(s, \upsilon)\in \mathscr{I},$
\begin{equation}
[I]=[s]+\sum_{i=1}^32^i[\upsilon_i]+1, \quad [j]=\left\{\begin{aligned}
&0, && j\ \text{is odd}, \\
&1, && j\ \text{is even}.
\end{aligned}\right.
\end{equation}
So far, we can define the cutoff functions as follows:
\begin{align}
	\chi_I(x)=\chi_\upsilon(\mu_{q,i}^{-1}x), \quad \theta_I(t)=\theta_s(\tau_{q,i}^{-1}t).
\end{align}
\subsection{Definition of building blocks}\label{Definition of Building blocks}
In this part, we give the building blocks which will be used in the construction of the perturbation. 
\begin{lm}\label{construction of building blocks}
Given $f\in\Z^3$ and $\mu,\lambda+\mu>0,$ we could find a nontrivial planar wave solution to the equation
\begin{align*}
	\partial_{tt}w_{A,f}-\mu\Delta w_{A,f}-(\lambda+\mu)\nabla(\Div w_{A,f})+A(\nabla^2w_{A,f})=0,
\end{align*} 
where $w_{A,f}:[0, \infty)\times\T^3\rightarrow \R^3$ and $(A(\nabla^2w))_p=A^{pm}_{nr}\partial_{nr}w_m$ with constants $A^{pm}_{nr}$.\par 
Especially, the solution could have the following expression
\begin{align}
	w_{A,f}=(f+a_{A,2}f^{\perp}+a_{A,3}\frac{f}{|f|}\times f^{\perp})e^{i\xi_{A,f}},
\end{align}
where $f^{\perp}\in\mathbb{Q}^3$ and satisfies  $f^{\perp}\perp f,$ $ |f^\perp|=|f| ,$ $\xi_{A,f}=f\cdot x-((\lambda+2\mu)|f|^2-c_A)^{\frac{1}{2}}t,$ and $c_A$ is a constant depending on coefficients $A^{pm}_{nr}$ and $|f|$. Moreover, there exists $\varepsilon_0(\lambda, \mu,C)>0$ such that, for any $\varepsilon<\varepsilon_0,$ if  $|A_{nr}^{pm}|_0\leqslant C\varepsilon,$ we have
 \begin{align}
 \|a_{A,2}\|_0+\|a_{A,3}\|_0\lesssim_{\lambda,\mu,C}\varepsilon\leqslant \frac{1}{10}, \quad |c_A|\lesssim_{C}\varepsilon|f|^2.\label{est on other dirct cof}
 \end{align}
\begin{proof}
For convenience, we denote by $f^{(i)}=f,f^{\perp},\frac{f}{|f|}\times f^{\perp},$ for $i=1,2,3,$ and $\tilde{A}^{cs}_{ij}=A(f^{(i)},f^{(j)},f^{(s)},f^{(r)})=A^{pm}_{nr}f^{(i)}_{n}f^{(j)}_{r}f^{(s)}_{m}f^{(c)}_{p}|f|^{-2}$. Next, we define  $w_{A,1}:=fe^{i\xi_{A,f}},$ where $c_A$ will be chosen later. Moreover, we have
\begin{align*}
	\partial_{tt}w_{A,1}-\mu\Delta w_{A,1}-(\lambda+\mu)\nabla(\Div w_{A,1})=c_Aw_{A,1}.
\end{align*}
 Notice that
$
A(\nabla^2w_{A,1})=\sum_{r=1}^3(A^{pm}_{nr}\partial_{nr}(w_{A,1})_mf^{(r)}_p)|f|^{-2}f^{(r)}=-\sum_{r=1}^3\tilde{A}^{r1}_{11}f^{(r)}e^{i\xi_{A,f}},
$
we could calculate
\begin{align*}
	\partial_{tt}w_{A,1}-\mu\Delta w_{A,1}-(\lambda+\mu)\nabla(\Div w_{A,1})+A(\nabla^2w_{A,1})=\left(c_Af^{(1)}-\sum_{r=1}^3\tilde{A}^{r1}_{11}f^{(r)}\right)e^{i\xi_{A,f}}.
\end{align*}
In order to cancel the items on the right hand side, we add $w_{A,2}+w_{A,3}$ to $w_{A,1},$ which is  given by
\begin{align*}
	w_{A,2}+w_{A,3}:=(a_{A,2}f^{(2)}+a_{A,3}f^{(3)})e^{i\xi_{A,f}}.
\end{align*}
Then, we have
\begin{align*}
	&\quad\partial_{tt}(w_{A,2}+w_{A,3})-\mu\Delta (w_{A,2}+w_{A,3})-(\lambda+\mu)\nabla(\Div (w_{A,2}+w_{A,3}))+A(\nabla^2 (w_{A,2}+w_{A,3}))\\
	&=(c_A-(\lambda+\mu)|f|^2)(a_{A,2}f^{(2)}+a_{A,3}f^{(3)})e^{i\xi_{A,f}}-\sum_{r=1}^3(a_{A,2}\tilde{A}^{r2}_{11}f^{(r)}+a_{A,3}\tilde{A}^{r3}_{11}f^{(r)})e^{i\xi_{A,f}}.
\end{align*}
So we  need the following equalities
\begin{equation}
	\left\{
	\begin{aligned}
		&a_{A,1}c_A-\sum_{r=1}^3a_{A,r}\tilde{A}^{1r}_{11}=0, \\
		&a_{A,2}(c_A-(\lambda+\mu)|f|^2)-\sum_{r=1}^3a_{A,r}\tilde{A}^{2r}_{11}=0, \\
		&a_{A,3}(c_A-(\lambda+\mu)|f|^2)-\sum_{r=1}^3a_{A,r}\tilde{A}^{3r}_{11}=0.
	\end{aligned}\right.
\end{equation}
where $a_{A,1}=1$. It can be rewritten as 
\begin{equation}
	\left\{
	\begin{aligned}
		&a_{A,2}\left(-\sum_{r=1}^3a_{A,r}|f|^{-2}\tilde{A}^{1r}_{11}+|f|^{-2}\tilde{A}^{22}_{11}+(\lambda+\mu)\right)+a_{A,3}|f|^{-2}\tilde{A}^{23}_{11}=-|f|^{-2}\tilde{A}^{21}_{11}, \\		&a_{A,2}|f|^{-2}\tilde{A}^{32}_{11}+a_{A,3}\left(-\sum_{r=1}^3a_{A,r}|f|^{-2}\tilde{A}^{1r}_{11}+|f|^{-2}\tilde{A}^{33}_{11}+(\lambda+\mu)\right)=-|f|^{-2}\tilde{A}^{31}_{11}.
	\end{aligned}\right.
\end{equation}
Let $ w_{A,f} = \sum_{r=1}^3 w_{A,r} $. For sufficiently small $\varepsilon_0(\lambda, \mu, C) > 0,$ we have for any $\varepsilon < \varepsilon_0,$  $ |A_{nr}^{pm}|_0 \leqslant C\varepsilon $. Consequently, $ |\tilde{A}^{rs}_{ij}| \leqslant C\varepsilon |f|^2 $.
 By using Lemma \ref{polynomial equations} and choosing parameters as $A_1=E_2=-|f|^{-2}\tilde{A}^{12}_{11},$ $A_2=E_1=-|f|^{-2}\tilde{A}^{13}_{11},$ 
 $B_1=|f|^{-2}\tilde{A}^{32}_{11},$ $B_2=|f|^{-2}\tilde{A}^{23}_{11},$ 
 $C_1=-|f|^{-2}\tilde{A}^{11}_{11}+|f|^{-2}\tilde{A}^{22}_{11}+(\lambda+\mu),$ $C_2=-|f|^{-2}\tilde{A}^{11}_{11}+|f|^{-2}\tilde{A}^{33}_{11}+(\lambda+\mu),$
 $D_1=-|f|^{-2}\tilde{A}^{21}_{11},$  $D_2=-|f|^{-2}\tilde{A}^{31}_{11},$ and $c=\lambda+\mu,$ we could get the solution $a_{A,2}=a_1,a_{A,3}=a_2$ which satisfy \eqref{est on other dirct cof} and complete the proof.
\end{proof}
\end{lm}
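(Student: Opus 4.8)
The plan is to look for $w_{A,f}$ as a single complex plane wave, $w_{A,f}=v\,e^{i(f\cdot x-\omega t)}$, with amplitude $v\in\mathbb C^3$ and frequency $\omega$ still to be chosen, and to convert the PDE into a finite-dimensional algebraic problem. Since each of $\partial_{tt}$, $\Delta$, $\nabla\Div$ and $A(\nabla^2\cdot)$ acts on $e^{i(f\cdot x-\omega t)}$ by multiplication, inserting the ansatz gives
\[
\bigl(-\omega^2+\mu|f|^2\bigr)\,v+(\lambda+\mu)(f\cdot v)\,f-\bigl(A^{pm}_{nr}f_nf_r\bigr)_{p,m}\,v=0,
\]
so $v$ must lie in the kernel of a $3\times3$ matrix $M(\omega^2)$ built from $f$, the Lamé constants, and the matrix $\widetilde A_{pm}:=A^{pm}_{nr}f_nf_r$, which by hypothesis satisfies $|\widetilde A|\lesssim\varepsilon|f|^2$. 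When $A\equiv0$ one has $M=(-\omega^2+\mu|f|^2)\Id+(\lambda+\mu)\,f\otimes f$, whose longitudinal mode $v=f$, $\omega^2=(\lambda+2\mu)|f|^2$ is an exact solution; the lemma says that a small $A$ perturbs only this mode, and that this relies on the gap $(\lambda+2\mu)-\mu=\lambda+\mu>0$ between the squared wave speeds.

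To make this quantitative I would fix the orthogonal triad $f^{(1)}=f$, $f^{(2)}=f^\perp$, $f^{(3)}=\tfrac{f}{|f|}\times f^\perp$ of common length $|f|$ with $f^\perp\in\mathbb Q^3$ (for each of the six vectors of $\Lambda^+$ such a rational partner is exhibited directly), set $\omega^2=(\lambda+2\mu)|f|^2-c_A$ and $v=f^{(1)}+a_{A,2}f^{(2)}+a_{A,3}f^{(3)}$ with the longitudinal coefficient normalised to $1$. Projecting the algebraic identity onto $f^{(1)}$, the $(\lambda+\mu)|f|^2$ contributions cancel against the frequency shift and one obtains $c_A$ as a \emph{linear} function of $(a_{A,2},a_{A,3})$ with all coefficients of size $O(\varepsilon|f|^2)$; projecting onto $f^{(2)}$ and $f^{(3)}$ and inserting that expression for $c_A$ produces a $2\times2$ system in $(a_{A,2},a_{A,3})$ whose diagonal entries are $\lambda+\mu$ up to an $O(\varepsilon)$ error, whose off-diagonal entries and right-hand sides are $O(\varepsilon)$, and whose only nonlinearity is the quadratic self-coupling inherited from $c_A$.

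Finally I would solve this perturbed, mildly nonlinear $2\times2$ system with the solvability statement in the appendix (Lemma~\ref{polynomial equations}): since $\lambda+\mu$ is a fixed positive number while every remaining datum is $O(\varepsilon)$, for $\varepsilon$ below a threshold $\varepsilon_0=\varepsilon_0(\lambda,\mu,C)$ the system has a solution with $\|a_{A,2}\|_0+\|a_{A,3}\|_0\lesssim_{\lambda,\mu,C}\varepsilon$, which is made $\le\tfrac1{10}$ by shrinking $\varepsilon_0$; feeding these bounds into the linear formula for $c_A$ gives $|c_A|\lesssim_C\varepsilon|f|^2$, and in particular $(\lambda+2\mu)|f|^2-c_A>0$, so $\omega$ is real and $w_{A,f}$ is a genuine bounded plane wave. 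The hard part is exactly this last step together with the $f^{(2)},f^{(3)}$ projections: one has to check that the $O(\varepsilon)$ perturbation does not destroy the invertibility inherited from $\lambda+\mu>0$, uniformly in $f$, while keeping the quadratic term controlled --- this is precisely where the double-wave-speed hypothesis $\lambda+\mu>0$ is indispensable, and why such a building block cannot be manufactured for the single-speed (scalar) wave equation; the rest is bookkeeping of plane-wave derivatives.
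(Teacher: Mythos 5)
Your proposal is correct and follows essentially the same route as the paper: the plane-wave ansatz $v\,e^{i(f\cdot x-\omega t)}$ with $v=f+a_{A,2}f^{(2)}+a_{A,3}f^{(3)}$ and $\omega^2=(\lambda+2\mu)|f|^2-c_A$, projection onto the triad $f^{(1)},f^{(2)},f^{(3)}$ to eliminate $c_A$ linearly and reduce to a quadratically perturbed $2\times2$ system with diagonal $\lambda+\mu+O(\varepsilon)$, solved by the appendix lemma (Lemma \ref{polynomial equations}) for $\varepsilon<\varepsilon_0(\lambda,\mu,C)$. This matches the paper's construction and use of the double-speed gap $\lambda+\mu>0$, so no further comparison is needed.
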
 
\subsection{Definition of the perturbation}\label{Definition of the perturbation}
Up to now, we are ready to construct the perturbations by using the building blocks defined before. For each $I=(s, \upsilon)\in \mathscr{I},$ we define
\begin{equation}\label{definition of A_I}
	\begin{aligned}
	(A_I)^{pm}_{nr}&=2\delta_{rp}\partial_n(u_{\ell,i})_m(s\tau_{q,i},2\pi\upsilon\mu_{q,i})-\delta_{rp}\partial_n(u_{\ell,i})_m(s\tau_{q,i},2\pi\upsilon\mu_{q,i})-\delta_{nj}\delta_{rj}\partial_p(u_{\ell,i})_m(s\tau_{q,i},2\pi\upsilon\mu_{q,i})\\
	&=\delta_{rp}\partial_n(u_{\ell,i})_m(s\tau_{q,i},2\pi\upsilon\mu_{q,i})-\delta_{nj}\delta_{rj}\partial_p(u_{\ell,i})_m(s\tau_{q,i},2\pi\upsilon\mu_{q,i}).
	\end{aligned}
\end{equation}
For convenience, we denote $ h^{s, \upsilon} = h(s\tau_{q,i}, 2\pi \upsilon \mu_{q,i}) $ for any function $ h \in C^0(\cI^{q,i}_{3\ell_{q,i}} \times \T^3) $. Notice that $ (\partial_j u_{\ell,i})^{s, \upsilon}_k $ is a constant on $\supp\theta_I\times\supp\chi_I=(s\tau_{q,i}-\frac{1}{4}\tau_{q,i},s\tau_{q,i}+\frac{5}{4}\tau_{q,i})\times Q(2\pi \upsilon \mu_{q,i}, \frac{5}{4}\pi\upsilon \mu_{q,i}).$ We have
\begin{align*}
	(A_I(\nabla^2 w))_p=(A_I)^{pm}_{nr}\partial_{nr}w_m
	&=2\partial_{jp}w_k (\partial_ju_{\ell,i})^{s, \upsilon}_k-\partial_{jp}w_k (\partial_ju_{\ell,i})^{s, \upsilon}_k-(\partial_{p}u_{\ell,i})^{s, \upsilon}_k\partial_{jj} w_k\\
	&=\left(\Div(\tr(\nabla w(\nabla u_{\ell,i}^{s, \upsilon})^{\top}+\nabla u_{\ell,i}^{s, \upsilon}(\nabla w)^{\top})\Id-(\nabla w)^{\top}\nabla u_{\ell,i}^{s, \upsilon}-(\nabla u_{\ell,i}^{s, \upsilon})^{\top}\nabla w)\right)_p.
\end{align*}
\begin{Rmk}
We use the notation $\nabla u_{\ell,i}^{s, \upsilon}$ instead of $(\nabla u_{\ell,i})^{s, \upsilon}$ for simplicity.
\end{Rmk}
Let $\xi_{A_I,f_{i+1}}=[I]\left(f_{i+1}\cdot x-((\lambda+2\mu)|f_{i+1}|^2-c_{A_I})^{\frac{1}{2}}t\right)$ and 
\begin{align*}
	w_{A_I,f_{i+1}}&=(f_{i+1}+a_{A_I,2}f_{i+1}^{(2)}+a_{A_I,3}f_{i+1}^{(3)})e^{i\lambda_{q,i+1}\xi_{A_I,f_{i+1}}}\\
	&=(i\lambda_{q,i+1}[I]|f_{i+1}|^2)^{-1}\Div((f_{i+1}+a_{A_I,2}f_{i+1}^{(2)}+a_{A_I,3}f_{i+1}^{(3)})\otimes f_{i+1}e^{i\lambda_{q,i+1}\xi_{A_I,f_{i+1}}}),
\end{align*}
where $c_{A_I},$ $a_{A_I,2}$ and $a_{A_I,3}$ can be chosen by Lemma \ref{construction of building blocks}. Then, we could calculate
\begin{align}
	&\partial_{tt}w_{A_I,f_{i+1}}-\mu\Delta w_{A_I,f_{i+1}}-(\lambda+\mu)\nabla(\Div w_{A_I,f_{i+1}})\nonumber\\
	&\quad+\Div\left(\tr(2\nabla u_{\ell,i}^{s, \upsilon}(\nabla w_{A_I,f_{i+1}})^{\top})\Id-(\nabla w_{A_I,f_{i+1}})^{\top}\nabla u_{\ell,i}^{s, \upsilon}-(\nabla u_{\ell,i}^{s, \upsilon})^{\top}\nabla w_{A_I,f_{i+1}}\right)=0.\label{prop of w_A}
\end{align} 
By using \eqref{est on R_q} and \eqref{pp of Lambda1}, we have $\|\delta_{q+1}^{-1}R_\ell\|_0< \lambda_{q}^{-2\gamma}\leqslant r_0$.
Then, we could define the weight coefficient $d_{q,i+1}$ which depends on $R_{\ell}$ and $I$ as
\begin{align*}
	d_{q,i+1}(t,x):=\delta_{q+1}^{\frac{1}{2}}\Gamma_{f_{i+1}}(\Id-\delta_{q+1}^{-1}R_{\ell}).
\end{align*}
We first define the main part of the perturbation $\tu_{q,i+1,p}$ as
\begin{align}
	\tu_{q,i+1,p}
	&=\sum_I\frac{\theta_I\chi_Id_{q,i+1}\tilde{f}_{A_I}}{\sqrt{2}\lambda_{q,i+1}[I]|\tilde{f}_{A_I}||f_{i+1}|}(e^{i\lambda_{q,i+1}\xi_{A_I,f_{i+1}}}+e^{-i\lambda_{q,i+1}\xi_{A_I,f_{i+1}}})\nonumber\\
	&=\sum_{I}\frac{1}{\lambda_{q,i+1}[I]}u_{q,i+1,I}(e^{i\lambda_{q,i+1}\xi_{A_I,f_{i+1}}}+e^{-i\lambda_{q,i+1}\xi_{A_I,f_{i+1}}}), \label{def of tu_q i+1,p}
\end{align}
where $\sum_I=\sum_{s}\sum_{\upsilon}$ and
\begin{align}
	\tilde{f}_{A_I}&=f_{i+1}+a_{A_I,2}f_{i+1}^{(2)}+a_{A_I,3}f_{i+1}^{(3)}, \quad\gamma_{q,i+1,I}=\frac{\theta_I\chi_Id_{q,i+1}}{\sqrt{2}|\tilde{f}_{A_I}||f_{i+1}|}, \quad u_{q,i+1,I}=\gamma_{q,i+1,I}\tilde{f}_{A_I}.\label{def of cof}
\end{align}
Moreover, $\supp u_{q,i+1,I}\bigcap\supp u_{q,i+1,J}=\emptyset,$ if $1<\|I-J\|:=\max\{|s(I)-s(J)|,\max_{1\leqslant i\leqslant 3}\{|\upsilon_{i}(I)-\upsilon_{i}(J)|\}\}$. The main part of the perturbation can be also written as
\begin{align*}
	\tu_{q,i+1,p}&=\sum_{I}\frac{1}{\lambda_{q,i+1}[I]}\gamma_{q,i+1,I}(w_{A_I,f_{i+1}}+\overline{w}_{A_I,f_{i+1}}).
\end{align*} \par 
If we add the perturbation $\tilde{u}_{q,i+1}$ to $u_{q,i},$  defining $u_{q,i+1}=u_{q,i}+\tu_{q,i+1},$ we want to find $R_{q,i+1}$ such that
\begin{align*}
	\partial_{tt}u_{q,i+1}-\mu\Delta u_{q,i+1}-(\lambda+\mu)\nabla\Div u_{q,i+1}+\Div(\tr(\nabla u_{q,i+1}(\nabla u_{q,i+1})^{\top})\Id-(\nabla u_{q,i+1})^\top\nabla u_{q,i+1})=\Div(R_{q,i+1}-c_q\Id).
\end{align*}
Integrating the left-hand side of the equations with respect to $x,$ we could get 
\begin{align*}
	&\int_{\T^3}\left(\partial_{tt}u_{q,i+1}-\mu\Delta u_{q,i+1}-(\lambda+\mu)\nabla\Div u_{q,i+1}+\Div(\tr(\nabla u_{q,i+1}(\nabla u_{q,i+1})^{\top})\Id-(\nabla u_{q,i+1})^\top\nabla u_{q,i+1})\right)\rd x\\
	&\quad=\int_{\T^3}\Div(R_q-c_q\Id)\rd x+\int_{\T^3}(\partial_{tt}u_{q,i+1}-\partial_{tt}u_{q,i})\rd x=\int_{\T^3}\partial_{tt}\tu_{q,i+1}\rd x=0.
\end{align*}
So we need to add a time correction term $\tu_{q,i+1,t}=g(t)$ which is a function of $t$ and satisfies that 
\begin{align}
\int_{\T^3}\partial_{tt}\tu_{q,i+1,p}(t,x)+\partial_{tt}g(t)\rd x=0,\quad t\in\cI^{q,i}_{3\ell_{q,i}}.
\end{align}
Here, we can simply take
\begin{align}
	\tu_{q,i+1,t}=g(t)=-\int_{\T^3}\tu_{q,i+1,p}(t,x)\rd x.
\end{align}
So we could define the perturbation
\begin{align}
	\tu_{q,i+1}&=\tu_{q,i+1,p}+\tu_{q,i+1,t}.
\end{align}
\begin{Rmk}
Here, we have two methods to make $\int_{\T^3}\partial_{tt}\tilde{u}_{q,i+1}\, \mathrm{d}x = 0$. One method, similar to constructing perturbations in the Euler equations, is to construct a perturbation in divergence form or curl form. The second method, as shown here, is to add a perturbation that depends only on time. The advantage of the second method is that it does not affect the form of $\nabla \tilde{u}_{q,i+1}$.
\end{Rmk}

Moreover, we could calculate
\begin{align*}
	\nabla\tu_{q,i+1}&=\underbrace{i\sum_{I}u_{q,i+1,I}\otimes f_{i+1}(e^{i\lambda_{q,i+1}\xi_{A_I,f_{i+1}}}-e^{-i\lambda_{q,i+1}\xi_{A_I,f_{i+1}}})}_{\tilde{w}_{q,i+1,p}}+\underbrace{\sum_{I}\frac{1}{\lambda_{q,i+1}[I]}\nabla u_{q,i+1,I}(e^{i\lambda_{q,i+1}\xi_{A_I,f_{i+1}}}+e^{-i\lambda_{q,i+1}\xi_{A_I,f_{i+1}}})}_{\tilde{w}_{q,i+1,c}}.
\end{align*}
Finally, we will give the following estimates on the perturbation.
\begin{pp}\label{est on perturbation}
	There exists $\varepsilon_1=\varepsilon_1(\lambda, \mu)$ such that for any $\varepsilon<\varepsilon_1,$ we have the following estimates:
	\begin{align}
	\|\partial_t^ru_{q,i+1,I}\|_N+	\|\partial_t^r\gamma_{q,i+1,I}\|_N&\lesssim_{\lambda,\mu,N,r} \tau_{q,i}^{-r}\mu_{q,i}^{-N}\delta_{q+1}^{\frac{1}{2}},&& 0\leqslant N+r, \label{est on u_I, gamma_I}\\
	\|\partial_t^rw_{A_I,f_{i+1}}\|_{N}&\lesssim_{\lambda,\mu} \lambda_{q,i+1}^{N+r},&& 0\leqslant N+r\leqslant 3, \label{est on w_I}\\
	\|\partial_t^r\tilde{u}_{q,i+1,t}\|_N&\lesssim_{\lambda,\mu,n_0}\lambda_{q,i+1}^{r-2} \delta_{q+1}^{\frac{1}{2}},&& 0\leqslant N+r\leqslant 3, \label{est on tu_q t}\\
	\|\partial_t^r\tilde{u}_{q,i+1,p}\|_N+\|\partial_t^r\tilde{u}_{q,i+1}\|_N&\lesssim_{\lambda,\mu,n_0} \lambda_{q,i+1}^{N+r-1}\delta_{q+1}^{\frac{1}{2}},&& 0\leqslant N+r\leqslant 3, \label{est on tu_q}\\	
	\|\partial_t^r\tilde{w}_{q,i+1,p}\|_N&\lesssim_{\lambda,\mu} \lambda_{q,i+1}^{N+r}\delta_{q+1}^{\frac{1}{2}},&& 0\leqslant N+r\leqslant 3, \label{est on nabla tu_q p}\\
	\|\partial_t^r\tilde{w}_{q,i+1,c}\|_N&\lesssim_{\lambda,\mu} (\lambda_{q,i+1}\mu_{q,i})^{-1}\lambda_{q,i+1}^{N+r}\delta_{q+1}^{\frac{1}{2}},&& 0\leqslant N+r\leqslant 3, \label{est on nabla tu_q c}
	\end{align}
where $0\leqslant i\leqslant 5$ and
$\|\cdot\|_N=\|\cdot\|_{C^0(\cI_{3\ell_{q,i}}^{q,i};C^N(\T^3))}$. Moreover, the implicit constants in \eqref{est on u_I, gamma_I}--\eqref{est on nabla tu_q c} can be chosen to be independent of $M$. 
\begin{proof}
	By the definition of $A_I$ in \eqref{definition of A_I}, we could know  $(A_I)^{pm}_{nr}$ are constants in each $I=(s, \upsilon)\in \mathscr{I}$ and
	\begin{align}
		|(A_I)^{pm}_{nr}|\lesssim\|\nabla u_{\ell,i}\|_0\lesssim\varepsilon, \label{est on A_I}
	\end{align}
	where the implicit constants does not depend on $M$. By using Lemma \ref{polynomial equations} and Lemma \ref{construction of building blocks}, we could find $\varepsilon_1=\varepsilon_1(\lambda, \mu)$ such that  for any $\varepsilon<\varepsilon_1(\lambda, \mu),$ the building block $w_{A_I,f_{i+1}}$ satisfies  
	\begin{align}
		\|a_{A_I,2}\|_0+\|a_{A_I,3}\|_0\lesssim_{\lambda,\mu}\varepsilon\leqslant \frac{1}{10}, \quad |c_{A_I}|\lesssim\varepsilon|f|^2. \label{est on a_i and c}
	\end{align}
	By using \eqref{def of mu tau}, \eqref{def of cof}, \eqref{est on A_I}, and \eqref{est on a_i and c}, we could easily obtain \eqref{est on u_I, gamma_I}.\par 
	To get \eqref{est on tu_q t}, by using  $\nabla\xi_{A_I,f_{i+1}}=[I]f_{i+1},$  $\supp u_{q,i+1,I}\bigcap\supp u_{q,i+1,J}=\emptyset$ for $\|I-J\|>1,$ \eqref{def of tu_q i+1,p}, and Lemma \ref{est on int operator}, we could obtain for $0\leqslant r\leqslant3,$
	\begin{align*}
		\|\partial_t^r\tilde{u}_{q,i+1,t}\|_N&=\left\|\int_{\T^3}\partial_t^r\tu_{q,i+1,p}(\cdot,x)\rd x\right\|_{C^{0}(\cI_{3\ell_{q,i}}^{q,i})}\\
		&\lesssim
		_{\lambda,\mu,n_0}\frac{1}{\lambda_{q,i+1}}\sum_{I}\sum_{r_1+r_2=r}\left(\frac{\lambda_{q,i+1}^{r_1}\|\partial_t^{r_2}u_{q,i+1,I}\|_{n_0+1}+\lambda_{q,i+1}^{r_1}\|\partial_t^{r_2}u_{q,i+1,I}\|_{0}\|\nabla\xi_{A_I,f_{i+1}}\|_N}{(\lambda_{q,i+1}|f|)^{n_0+1}}\right)\\
		&\lesssim_{\lambda,\mu,n_0}(\lambda_{q,i+1}\mu_{q,i})^{-(n_0+1)}\lambda_{q,i+1}^{r-1}\delta_{q+1}^{\frac{1}{2}}\lesssim\lambda_{q,i+1}^{r-2}\delta_{q+1}^{\frac{1}{2}},
	\end{align*} 
	where the last inequality comes from \eqref{prop of tn_0}. \eqref{est on w_I}, \eqref{est on tu_q},  \eqref{est on nabla tu_q p}, and \eqref{est on nabla tu_q c} are immediate consequence. Moreover, the implicit constants in \eqref{est on u_I, gamma_I}--\eqref{est on nabla tu_q c} can be chosen to be independent of $M,$ because $\mu_{q,i},$ $\tau_{q,i} $ and the implicit constants in \eqref{est on A_I} and \eqref{est on a_i and c} are independent of $M$.
	
\end{proof}
\end{pp}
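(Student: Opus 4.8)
The plan is to establish the six bounds in the order listed, each from the previous one, while checking at every step that the implicit constants never depend on $M$. The two structural inputs are: (i) by \eqref{definition of A_I}, each $(A_I)^{pm}_{nr}$ is a constant on $\supp\theta_I\chi_I$ and a fixed linear combination of entries of $\nabla u_{\ell,i}$ frozen at $(s\tau_{q,i},2\pi\upsilon\mu_{q,i})$, so $|(A_I)^{pm}_{nr}|\lesssim\|\nabla u_{\ell,i}\|_0\lesssim\varepsilon$ by \eqref{est on u_li 0}, with an $M$-free constant; (ii) choosing $\varepsilon_1=\varepsilon_1(\lambda,\mu)$ small enough that Lemma \ref{construction of building blocks} (via Lemma \ref{polynomial equations}) applies with that absolute constant, the building block $w_{A_I,f_{i+1}}$ exists and $\|a_{A_I,2}\|_0+\|a_{A_I,3}\|_0\lesssim_{\lambda,\mu}\varepsilon\leqslant\frac{1}{10}$, $|c_{A_I}|\lesssim\varepsilon|f_{i+1}|^2$, both $M$-free. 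Consequently $|\tilde{f}_{A_I}|=|f_{i+1}|(1+a_{A_I,2}^2+a_{A_I,3}^2)^{1/2}$ is comparable to $|f_{i+1}|\sim1$ and bounded below, and on each cube the phase $\xi_{A_I,f_{i+1}}$ is affine with $\nabla\xi_{A_I,f_{i+1}}=[I]f_{i+1}$ and $\partial_t\xi_{A_I,f_{i+1}}=-[I]((\lambda+2\mu)|f_{i+1}|^2-c_{A_I})^{1/2}$, both of size $O_{\lambda,\mu}(1)$ since $[I]\in\{1,2\}$.

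For \eqref{est on u_I, gamma_I} I would write $u_{q,i+1,I}=\gamma_{q,i+1,I}\tilde{f}_{A_I}$ with $\gamma_{q,i+1,I}=\theta_I\chi_I d_{q,i+1}/(\sqrt2|\tilde{f}_{A_I}||f_{i+1}|)$ and $d_{q,i+1}=\delta_{q+1}^{1/2}\Gamma_{f_{i+1}}(\Id-\delta_{q+1}^{-1}R_\ell)$, then differentiate by the Leibniz and Fa\`a di Bruno rules. The cutoffs give $\|\partial_t^r\theta_I\|_0\lesssim_r\tau_{q,i}^{-r}$ and $\|\nabla^N\chi_I\|_0\lesssim_N\mu_{q,i}^{-N}$ by \eqref{def of mu tau}; $\Gamma_{f_{i+1}}$ is smooth on $B_{r_0}(\Id)$ and $\|\delta_{q+1}^{-1}R_\ell\|_0<\lambda_q^{-2\gamma}<r_0$ by \eqref{est on R_q}--\eqref{pp of Lambda1}, so $d_{q,i+1}$ is a smooth function of $R_\ell$ whose derivatives are controlled by \eqref{est on R_li}; and since $\ell_{q,0}^{-1}\lesssim\mu_{q,i}^{-1}\sim\tau_{q,i}^{-1}$ and $\lambda_q^{1-2\gamma}\ell_{q,0}\lesssim1$ for $b=\bar b$, those derivative costs are absorbed into powers of $\tau_{q,i}^{-1}$ and $\mu_{q,i}^{-1}$, giving \eqref{est on u_I, gamma_I}. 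For \eqref{est on w_I}, each derivative of $w_{A_I,f_{i+1}}=\tilde{f}_{A_I}e^{i\lambda_{q,i+1}\xi_{A_I,f_{i+1}}}$ costs at most $\lambda_{q,i+1}$ by the previous paragraph, so $\|\partial_t^rw_{A_I,f_{i+1}}\|_N\lesssim_{\lambda,\mu}\lambda_{q,i+1}^{N+r}$.

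The delicate point is \eqref{est on tu_q t}. Since $\tu_{q,i+1,t}(t)=-\int_{\T^3}\tu_{q,i+1,p}(t,x)\,\rd x$ is a finite sum over $I$ of $x$-integrals of $\lambda_{q,i+1}^{-1}[I]^{-1}u_{q,i+1,I}\,e^{\pm i\lambda_{q,i+1}\xi_{A_I,f_{i+1}}}$, I would integrate by parts $n_0+1$ times using the oscillation (Lemma \ref{est on int operator}): this produces a gain $(\lambda_{q,i+1}|f_{i+1}|)^{-(n_0+1)}$ against $n_0+1$ spatial derivatives of $u_{q,i+1,I}$, each worth $\mu_{q,i}^{-1}$ by \eqref{est on u_I, gamma_I}, so $\|\partial_t^r\tu_{q,i+1,t}\|_N\lesssim_{\lambda,\mu,n_0}(\lambda_{q,i+1}\mu_{q,i})^{-(n_0+1)}\lambda_{q,i+1}^{r-1}\delta_{q+1}^{1/2}$, which the choice of $n_0$ in \eqref{prop of tn_0} upgrades to $\lesssim\lambda_{q,i+1}^{r-2}\delta_{q+1}^{1/2}$ (no spatial index enters since $\tu_{q,i+1,t}$ depends only on $t$). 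Then \eqref{est on tu_q}, \eqref{est on nabla tu_q p}, \eqref{est on nabla tu_q c} follow by combining: from \eqref{def of tu_q i+1,p} and $\nabla\tu_{q,i+1}=\tilde{w}_{q,i+1,p}+\tilde{w}_{q,i+1,c}$, the bounded overlap of $\{\supp u_{q,i+1,I}\}$ (disjoint for $\|I-J\|>1$), \eqref{est on u_I, gamma_I}, $\|\partial_t^re^{\pm i\lambda_{q,i+1}\xi_{A_I,f_{i+1}}}\|_N\lesssim_{\lambda,\mu}\lambda_{q,i+1}^{N+r}$, and $\tau_{q,i}^{-1},\mu_{q,i}^{-1}\lesssim\lambda_{q,i+1}$, one reads off $\lambda_{q,i+1}^{N+r-1}\delta_{q+1}^{1/2}$ for $\tu_{q,i+1,p}$ and (with \eqref{est on tu_q t}) for $\tu_{q,i+1}$, $\lambda_{q,i+1}^{N+r}\delta_{q+1}^{1/2}$ for $\tilde{w}_{q,i+1,p}$, and $(\lambda_{q,i+1}\mu_{q,i})^{-1}\lambda_{q,i+1}^{N+r}\delta_{q+1}^{1/2}$ for $\tilde{w}_{q,i+1,c}$ (the extra gain coming from the factor $\mu_{q,i}^{-1}$ on $\nabla u_{q,i+1,I}$ and the $\lambda_{q,i+1}^{-1}$ prefactor). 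All constants remain $M$-free because $\mu_{q,i}$, $\tau_{q,i}$, and the bounds on $A_I$ and $R_\ell$ are.

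I expect \eqref{est on tu_q t} to be the main obstacle: without the vanishing-moment normalization defining $n_0$ the oscillatory cancellation in $\int_{\T^3}\tu_{q,i+1,p}\,\rd x$ gives only $\lambda_{q,i+1}^{-1}\delta_{q+1}^{1/2}$, which is too weak for the subsequent error estimates, so the quantitative bookkeeping of the gain is essential; a secondary but not automatic point is the verification in the second paragraph that differentiating $d_{q,i+1}$ through $R_\ell$ never costs more than $\tau_{q,i}^{-1}\sim\mu_{q,i}^{-1}$, which relies on $\ell_{q,0}^{-1}\lesssim\mu_{q,i}^{-1}$ and $\lambda_q^{1-2\gamma}\ell_{q,0}\lesssim1$ under the parameter restrictions fixed in Proposition \ref{Inductive proposition}.
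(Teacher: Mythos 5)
Your proposal is correct and follows essentially the same route as the paper's proof: the $\varepsilon$-bound on $A_I$ and the building-block lemma give \eqref{est on a_i and c}, the cutoff parameters \eqref{def of mu tau} give \eqref{est on u_I, gamma_I}, the oscillatory-integral Lemma \ref{est on int operator} together with the choice of $n_0$ in \eqref{prop of tn_0} gives \eqref{est on tu_q t}, and the remaining bounds and the $M$-independence follow exactly as you state (you in fact supply slightly more detail than the paper on differentiating $d_{q,i+1}$ through $R_\ell$). One immaterial slip: $[I]=[s]+\sum_{i}2^i[\upsilon_i]+1$ ranges over $\{1,\dots,16\}$, not $\{1,2\}$, but since it is bounded by a universal constant this does not affect any estimate.
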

\section{Definition of the new Reynolds error}\label{Definition of the new errors}  
In the previous section, we have constructed the perturbation.  Here, we  will add the perturbation $\tu_{q,i+1}$ to $u_{q,i},$ defining $u_{q,i+1}=u_{q,i}+\tu_{q,i+1},$ then we could define the new Reynolds error $R_{q,i+1}$.
\subsection{Definition of the new Reynolds error}
We will use the inverse divergence operator $\cR$ to define the new error $R_{q,i+1}$. More details about the inverse divergence operator can be found in Appendix \ref{Inverse divergence operator}. $\Div(R_{q,i+1})$ can be written as
\begin{align*}
	&\quad\Div(R_{q,i+1})\\
	&=\underbrace{\partial_{tt}\tu_{q,i+1}-\mu\Delta \tu_{q,i+1}-(\lambda+\mu)\nabla\Div \tu_{q,i+1}+\Div(\tr(2\nabla u_{\ell,i}(\nabla \tu_{q,i+1})^{\top})\Id-(\nabla \tu_{q,i+1})^\top\nabla u_{\ell,i}-(\nabla u_{\ell,i})^\top\nabla \tu_{q,i+1})}_{\Div R_{L}=\Div R_{L1}+\Div R_{L2}}\\
	&\quad+\underbrace{\Div(2\tr(\nabla (u_{q,i}-u_{\ell,i})(\nabla \tu_{q,i+1})^{\top})\Id-(\nabla \tu_{q,i+1})^\top\nabla (u_{q,i}-u_{\ell,i})-(\nabla (u_{q,i}-u_{\ell,i}))^\top\nabla \tu_{q,i+1})}_{\Div R_{M}}\\
	&\quad+\underbrace{\Div\left(\tr(\nabla \tu_{q,i+1}(\nabla \tu_{q,i+1})^{\top})\Id-(\nabla \tu_{q,i+1})^\top\nabla \tu_{q,i+1}-(\delta_{q+1}^{\frac{1}{2}}\Gamma_{f_{i+1}}(\Id-\delta_{q+1}^{-1}R_{\ell}))^2\left(\Id-\frac{f_{i+1}}{|f_{i+1}|}\otimes \frac{f_{i+1}}{|f_{i+1}|}\right)\right)}_{\Div R_O=\Div R_{O1}+\Div R_{O2}}\\
	&\quad+\Div \left(R_{q,i}+(\delta_{q+1}^{\frac{1}{2}}\Gamma_{f_{i+1}}(\Id-\delta_{q+1}^{-1}R_{\ell}))^2\left(\Id-\frac{f_{i+1}}{|f_{i+1}|}\otimes \frac{f_{i+1}}{|f_{i+1}|}\right)\right),
\end{align*}
and then we define the new Reynolds error
\begin{align}
	R_{q,i+1}&=R_{q,i}+(\delta_{q+1}^{\frac{1}{2}}\Gamma_{f_{i+1}}(\Id-\delta_{q+1}^{-1}R_{\ell}))^2\left(\Id-\frac{f_{i+1}}{|f_{i+1}|}\otimes \frac{f_{i+1}}{|f_{i+1}|}\right)+\delta R_{q,i+1}, \\
	\delta R_{q,i+1}&=R_{L}+R_{M}+R_{O}, \label{split of R_q i+1}
\end{align}
where
\begin{align}
	R_{M}&=\tr(2\nabla (u_{q,i}-u_{\ell,i})(\nabla \tu_{q,i+1})^{\top})\Id-(\nabla \tu_{q,i+1})^\top\nabla (u_{q,i}-u_{\ell,i})-(\nabla (u_{q,i}-u_{\ell,i}))^\top\nabla \tu_{q,i+1}, \label{def of R_M}\\
	R_{O1}&=\cR\Div\left(\tr(\tilde{w}_{q,i+1,p}(\tilde{w}_{q,i+1,p})^{\top})\Id-(\tilde{w}_{q,i+1,p})^\top\tilde{w}_{q,i+1,p}-(\delta_{q+1}^{\frac{1}{2}}\Gamma_{f_{i+1}}(\Id-\delta_{q+1}^{-1}R_{\ell}))^2\left(\Id-\frac{f_{i+1}}{|f_{i+1}|}\otimes \frac{f_{i+1}}{|f_{i+1}|}\right)\right)\nonumber\\
	&=\cR\Div\left(\tr(\tilde{w}_{q,i+1,p}(\tilde{w}_{q,i+1,p})^{\top})\Id-(\tilde{w}_{q,i+1,p})^\top\tilde{w}_{q,i+1,p}-2|f_{i+1}|^2\sum_I|u_{q,i+1,I}|^2\left(\Id-\frac{f_{i+1}}{|f_{i+1}|}\otimes \frac{f_{i+1}}{|f_{i+1}|}\right)\right), \label{def of R_O1}\\
	R_{O2}&=\tr(\tilde{w}_{q,i+1,p}(\tilde{w}_{q,i+1,c})^{\top}+\tilde{w}_{q,i+1,c}(\tilde{w}_{q,i+1,p})^{\top}+\tilde{w}_{q,i+1,c}(\tilde{w}_{q,i+1,c})^{\top})\Id\nonumber\\
	&\quad-(\tilde{w}_{q,i+1,c})^\top\tilde{w}_{q,i+1,p}-(\tilde{w}_{q,i+1,p})^\top\tilde{w}_{q,i+1,c}-(\tilde{w}_{q,i+1,c})^\top\tilde{w}_{q,i+1,c},\label{def of R_O2} \\
	R_{O}&:=R_{O1}+R_{O2}\label{def of R_O}.
\end{align}
The definition of $R_{L}$ is a little different. Notice that
\begin{align*}
	\Div R_{L}&=\partial_{tt}\tu_{q,i+1}-\mu\Delta \tu_{q,i+1}-(\lambda+\mu)\nabla\Div \tu_{q,i+1}+\Div(\tr(2\nabla u_{\ell,i}(\nabla \tu_{q,i+1})^{\top})\Id-(\nabla \tu_{q,i+1})^\top\nabla u_{\ell,i}-(\nabla u_{\ell,i})^\top\nabla \tu_{q,i+1}).
\end{align*} 
By using \eqref{prop of w_A}, we could calculate
\begin{align*}
	&\quad\partial_{tt}\tu_{q,i+1,p}-\mu\Delta \tu_{q,i+1,p}-(\lambda+\mu)\nabla\Div \tu_{q,i+1,p}\\
	&=\sum_{I}\frac{1}{\lambda_{q,i+1}[I]}\left((\partial_{tt}\gamma_{q,i+1,I}-\mu\Delta\gamma_{q,i+1,I})(w_{A_I,f_{i+1}}+\overline{w}_{A_I,f_{i+1}})-(\lambda+\mu)\nabla((w_{A_I,f_{i+1}}+\overline{w}_{A_I,f_{i+1}})\cdot\nabla\gamma_{q,i+1,I})\right)\\
	&\quad+\sum_{I}\frac{1}{\lambda_{q,i+1}[I]}\left(2\partial_t\gamma_{q,i+1,I}\partial_t(w_{A_I,f_{i+1}}+\overline{w}_{A_I,f_{i+1}})-2\mu\nabla(w_{A_I,f_{i+1}}+\overline{w}_{A_I,f_{i+1}})\nabla\gamma_{q,i+1,I}\right) \\
	&\quad-\sum_{I}\frac{\lambda+\mu}{\lambda_{q,i+1}[I]}\left(\Div(w_{A_I,f_{i+1}}+\overline{w}_{A_I,f_{i+1}})\nabla\gamma_{q,i+1,I}\right) -\sum_{I}\frac{\gamma_{q,i+1,I}}{\lambda_{q,i+1}[I]}\Div(\tr(2\nabla u_{\ell,i}^{s, \upsilon}(\nabla (w_{A_I,f_{i+1}}+\overline{w}_{A_I,f_{i+1}}))^{\top})\Id)\\
	&\quad+\sum_{I}\frac{\gamma_{q,i+1,I}}{\lambda_{q,i+1}[I]}\Div((\nabla (w_{A_I,f_{i+1}}+\overline{w}_{A_I,f_{i+1}}))^{\top}\nabla u_{\ell,i}^{s, \upsilon}+(\nabla u_{\ell,i}^{s, \upsilon})^{\top}\nabla (w_{A_I,f_{i+1}}+\overline{w}_{A_I,f_{i+1}})).
\end{align*} 
Then we could divide the linear error $R_L$ into two parts:
\begin{align} 		  R_{L1}&:=\cR\Big(\sum_{I}\frac{1}{\lambda_{q,i+1}[I]}\left((\partial_{tt}\gamma_{q,i+1,I}-\mu\Delta\gamma_{q,i+1,I})(w_{A_I,f_{i+1}}+\overline{w}_{A_I,f_{i+1}})\right)\nonumber\\
	&\quad+\sum_{I}\frac{1}{\lambda_{q,i+1}[I]}\left(2\partial_t\gamma_{q,i+1,I}\partial_t(w_{A_I,f_{i+1}}+\overline{w}_{A_I,f_{i+1}})-2\mu\nabla(w_{A_I,f_{i+1}}+\overline{w}_{A_I,f_{i+1}})\nabla\gamma_{q,i+1,I}\right) \nonumber\\
	&\quad-\sum_{I}\frac{\lambda+\mu}{\lambda_{q,i+1}[I]}\left(\nabla((w_{A_I,f_{i+1}}+\overline{w}_{A_I,f_{i+1}})\cdot\nabla\gamma_{q,i+1,I})+\Div(w_{A_I,f_{i+1}}+\overline{w}_{A_I,f_{i+1}})\nabla\gamma_{q,i+1,I}\right)\label{def of R_L1}\\
	&\quad+\sum_{I}\frac{1}{\lambda_{q,i+1}[I]}\tr(2\nabla u_{\ell,i}^{s, \upsilon} (\nabla(w_{A_I,f_{i+1}}+\overline{w}_{A_I,f_{i+1}}))^{\top})\nabla\gamma_{q,i+1,I}\nonumber\\
	&\quad-\sum_{I}\frac{1}{\lambda_{q,i+1}[I]}\left((\nabla (w_{A_I,f_{i+1}}+\overline{w}_{A_I,f_{i+1}}))^{\top}\nabla u_{\ell,i}^{s, \upsilon}+(\nabla u_{\ell,i}^{s, \upsilon})^{\top}\nabla(w_{A_I,f_{i+1}}+\overline{w}_{A_I,f_{i+1}})\right)\nabla\gamma_{q,i+1,I}+\partial_{tt}\tu_{q,i+1,t}\Big),\nonumber\\
	R_{L2}
	&:=\tr(2\nabla u_{\ell,i}(\nabla \tu_{q,i+1})^{\top})\Id-(\nabla \tu_{q,i+1})^\top\nabla u_{\ell,i}-(\nabla u_{\ell,i})^\top\nabla \tu_{q,i+1}\nonumber\\
	&\quad-\sum_{I}\frac{\gamma_{q,i+1,I}}{\lambda_{q,i+1}[I]}\tr(2\nabla u_{\ell,i}^{s, \upsilon}(\nabla (w_{A_I,f_{i+1}}+\overline{w}_{A_I,f_{i+1}}))^{\top})\Id\label{def of R_L2}\\
	&\quad+\sum_{I}\frac{\gamma_{q,i+1,I}}{\lambda_{q,i+1}[I]}(\nabla (w_{A_I,f_{i+1}}+\overline{w}_{A_I,f_{i+1}}))^{\top}\nabla u_{\ell,i}^{s, \upsilon}+(\nabla u_{\ell,i}^{s, \upsilon})^{\top}\nabla (w_{A_I,f_{i+1}}+\overline{w}_{A_I,f_{i+1}}), \nonumber\\
	R_{L}&:=R_{L1}+R_{L2}\label{def of R_L}.
\end{align}
\section{Estimates on the new Reynolds error}\label{Estimates on the new Reynolds error}
In this section, we will give the estimates on the new Reynolds stress $R_{q+1}$ and its derivative. For convenience, if the implicit constants in the estimates in this section depend on $\lambda,$ $\mu,$ and $M,$ we will not write it out.  For the remaining sections, we set $\|\cdot\|_N =\|\cdot\|_{C^0(\cI^{q,i}; C^N(\T^3))}$ and fix $n_0 =\left\lceil\frac{12b(1+2\gamma)}{b-1+6\beta}\right\rceil$ so that
\begin{equation}\label{prop of tn_0}
	({\lambda_{q,i+1}\mu_{q,i}})^{-n_0}\mu_{q,i}^{-2\gamma}\lesssim	({\lambda_{q,i+1}\mu_{q,i}})^{-n_0}\lambda_{q,i+1}^{2\gamma}\lesssim \lambda_{q,i+1}^{-1}, \quad 0\leqslant i\leqslant 5.
\end{equation}
\begin{pp}\label{prop of Reynolds error}
	For any $0<\beta<\frac{1}{60},$ let the parameters $a^*_1$ and $\varepsilon_1$ be as in the statement of  Propositions \ref{est on Mollification} and \ref{est on perturbation}. Then, if  $\bar{b}(\beta)=\frac{1+12 \beta-36 \gamma}{48 \beta}>1+6\beta$ and $\varepsilon<\varepsilon_1,$ we can find  $a^*_0=a^*_0(\beta,b, \gamma ,M)\geqslant a^*_1$ such that for $b=\overline{b}$ and  any $a>a^*_0,$ we have  for $0\leqslant i\leqslant 5,$
	\begin{align}
		\|\partial_t^r\delta R_{q,i+1}\|_{N}&\leqslant C_{\lambda,\mu,M}\lambda_{q,i+1}^{N+r+\gamma} \cdot
		{\lambda_{q,i}^\frac{1}{2}}\lambda_{q,i+1}^{-\frac{1}{2}} \delta_q^\frac{1}{4}\delta_{q+1}^\frac{1}{2} 
		\leqslant \frac{1}{12} \lambda_{q,i+1}^{N+r-2\gamma} \delta_{q+2},\label{est on delta R_q i+1},&&0\leqslant N+r\leqslant2,\\
		\|\partial_t^r( R_{q}-R_{\ell})\|_{N}&\lesssim\ell_{q,0}^{2-N-r}\lambda_{q}^{2-2\gamma }\delta_{q+1}\leqslant\frac{1}{12} \lambda_{q,i+1}^{N+r-2\gamma} \delta_{q+2},&&0\leqslant N+r\leqslant2, \label{est on R_q-R_l}
	\end{align}
	where $C_{\lambda,\mu,M}$ depends only upon $\lambda,$ $\mu,$ and $M$ in Propositions \ref{Inductive proposition} and \ref{Bifurcating inductive proposition}.
\end{pp}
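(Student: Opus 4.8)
By \eqref{split of R_q i+1} we have $\delta R_{q,i+1}=R_L+R_M+R_O$ with $R_L=R_{L1}+R_{L2}$ and $R_O=R_{O1}+R_{O2}$, so it suffices to bound each of $R_M,R_{O1},R_{O2},R_{L1},R_{L2}$ by the gained quantity $G_{N,r}:=C_{\lambda,\mu,M}\,\lambda_{q,i+1}^{N+r+\gamma}\,\lambda_{q,i}^{1/2}\lambda_{q,i+1}^{-1/2}\delta_q^{1/4}\delta_{q+1}^{1/2}$ for $0\le N+r\le2$, and then to verify $G_{N,r}\le\tfrac1{12}\lambda_{q,i+1}^{N+r-2\gamma}\delta_{q+2}$. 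The first step is to collect the parameter inequalities forced by \eqref{def of parameter}, \eqref{def of l}, \eqref{def of mu tau}: from $\gamma=\gamma(\beta)<1/36$ and $\bar b=\frac{1+12\beta-36\gamma}{48\beta}$ one checks $\bar b>1+6\beta$, so Propositions \ref{est on Mollification} and \ref{est on perturbation} and \eqref{prop of tn_0} all apply; and, dividing the target inequality by $\lambda_{q,i+1}^{N+r}$ and using $\lambda_q\le\lambda_{q,i}\le\lambda_{q,i+1}\le\lambda_{q+1}$, $\delta_j=\lambda_j^{-2\beta}$ and $\lambda_{q+1}\approx\lambda_q^{\bar b}$, it reduces to the single power-counting inequality $\lambda_{q,i}^{1/2}\lambda_{q,i+1}^{3\gamma-1/2}\delta_q^{1/4}\delta_{q+1}^{1/2}\lesssim\delta_{q+2}$, which holds for all $a>a^*_0$ precisely because of this choice of $\bar b$ and $\gamma$. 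The same kind of comparison, applied to \eqref{est on R_qi-R_li}, immediately gives \eqref{est on R_q-R_l}.

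\textbf{Commutator-type errors $R_M$ and $R_{L2}$.} These are algebraic products whose smallness comes from a mollification gain. For $R_M$ in \eqref{def of R_M}, a Leibniz/product estimate gives $\|\partial_t^rR_M\|_N\lesssim\sum\|\partial_t^{r_1}\nabla(u_{q,i}-u_{\ell,i})\|_{N_1}\|\partial_t^{r_2}\nabla\tilde u_{q,i+1}\|_{N_2}$; inserting \eqref{est on u_qi-u_li} for the first factor and \eqref{est on tu_q}, \eqref{est on nabla tu_q p} for the second, the worst term is $\lesssim\ell_{q,i}^2M\lambda_{q,i}^2\delta_{q,i}^{1/2}\lambda_{q,i+1}^{N+r}\delta_{q+1}^{1/2}$, which by $\ell_{q,i}=(\lambda_{q,i}\lambda_{q,i+1})^{-1/2}\delta_q^{-1/4}$ lies far below $G_{N,r}$. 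For $R_{L2}$ in \eqref{def of R_L2}, on $\supp\theta_I\chi_I$ one has $|t-s\tau_{q,i}|\lesssim\tau_{q,i}$ and $|x-2\pi\upsilon\mu_{q,i}|\lesssim\mu_{q,i}$, so Taylor's theorem gives $|\nabla u_{\ell,i}-\nabla u_{\ell,i}^{s,\upsilon}|\lesssim(\tau_{q,i}+\mu_{q,i})\bigl(\|\partial_t\nabla u_{\ell,i}\|_0+\|\nabla^2 u_{\ell,i}\|_0\bigr)$; combining this and its derivatives, estimated via \eqref{est on u_li 1}, with the building-block bounds \eqref{est on w_I}, \eqref{est on u_I, gamma_I}, and using $\tau_{q,i}\approx\mu_{q,i}\approx(\lambda_{q,i}\lambda_{q,i+1})^{-1/2}\delta_q^{1/4}$, one again obtains a bound below $G_{N,r}$, the worst case being all derivatives on the building-block factor.

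\textbf{Oscillation error $R_O$.} The term $R_{O2}$ in \eqref{def of R_O2} is lower order: each summand contains a factor $\tilde w_{q,i+1,c}$, which by \eqref{est on nabla tu_q c} gains $(\lambda_{q,i+1}\mu_{q,i})^{-1}$ over $\tilde w_{q,i+1,p}$, whence $\|\partial_t^rR_{O2}\|_N\lesssim(\lambda_{q,i+1}\mu_{q,i})^{-1}\lambda_{q,i+1}^{N+r}\delta_{q+1}\lesssim G_{N,r}$. The core is $R_{O1}=\cR\Div(\cdots)$ in \eqref{def of R_O1}. Here I would first check the identity $\tr(\tilde w_{q,i+1,p}(\tilde w_{q,i+1,p})^{\top})\Id-(\tilde w_{q,i+1,p})^{\top}\tilde w_{q,i+1,p}=\bigl(|f_{i+1}|^2\Id-f_{i+1}\otimes f_{i+1}\bigr)\sum_{I,J}(u_{q,i+1,I}\cdot u_{q,i+1,J})\,s_{I,J}$, where $s_{I,J}$ is a product of two of the scalar oscillations $e^{\pm i\lambda_{q,i+1}\xi_{A_I,f_{i+1}}}$ (one for index $I$, one for index $J$); by the partition-of-unity property of $\theta_I\chi_I$ and the definition of $d_{q,i+1}$, its spatial mean equals $2|f_{i+1}|^2\sum_I|u_{q,i+1,I}|^2(\Id-\tfrac{f_{i+1}}{|f_{i+1}|}\otimes\tfrac{f_{i+1}}{|f_{i+1}|})$ exactly, so that $R_{O1}=\cR\Div$ of a purely oscillatory tensor. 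In that tensor every Fourier mode has wavenumber $k=(\pm[I]\pm[J])\lambda_{q,i+1}f_{i+1}$; the diagonal pieces have $|k|=2[I]\lambda_{q,i+1}|f_{i+1}|$, and the off-diagonal pieces $I\ne J$ are nonzero only when $\supp(\theta_I\chi_I)\cap\supp(\theta_J\chi_J)\ne\emptyset$, in which case the index weights satisfy $[I]\ne[J]$, so $|k|\gtrsim\lambda_{q,i+1}$; moreover, since $k\parallel f_{i+1}$ and the matrix factor is $|f_{i+1}|^2\Id-f_{i+1}\otimes f_{i+1}$, the leading term of the divergence cancels and only the slow derivative of the amplitude (costing $\mu_{q,i}^{-1}$) survives. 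Applying $\cR$ from Appendix \ref{Inverse divergence operator} with $n_0$ integrations by parts and Lemma \ref{est on int operator}, using \eqref{prop of tn_0} and the coefficient bounds \eqref{est on u_I, gamma_I}, then yields $\|\partial_t^rR_{O1}\|_N\lesssim\lambda_{q,i+1}^{N+r-1}\mu_{q,i}^{-1}\delta_{q+1}\lesssim G_{N,r}$.

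\textbf{Linear error $R_{L1}$ and the main obstacle.} Every summand of $R_{L1}$ in \eqref{def of R_L1} carries at least one derivative on the slow amplitude $\gamma_{q,i+1,I}$ (costing $\tau_{q,i}^{-1}$ or $\mu_{q,i}^{-1}$, both $\ll\lambda_{q,i+1}$), the prefactor $(\lambda_{q,i+1}[I])^{-1}$, and an application of $\cR$ (gaining another $\lambda_{q,i+1}^{-1}$ on the oscillatory part); together with \eqref{est on w_I}, \eqref{est on u_I, gamma_I}, \eqref{est on u_li 1}, \eqref{est on tu_q t} and Lemma \ref{est on int operator} this gives $\|\partial_t^rR_{L1}\|_N\lesssim\lambda_{q,i+1}^{N+r-1}\mu_{q,i}^{-1}\delta_{q+1}^{1/2}\lesssim G_{N,r}$, the contribution of $\partial_{tt}\tilde u_{q,i+1,t}$ being absorbed since it is spatially constant and of size $\lesssim\lambda_{q,i+1}^{r-2}\delta_{q+1}^{1/2}$. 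Summing the five bounds and invoking the power-counting inequality of the first paragraph proves \eqref{est on delta R_q i+1}. The main obstacle is not any single estimate but the simultaneous bookkeeping: for each of the five error types, for every way of distributing the at most two derivatives, and for every $0\le i\le5$, one must track the exact powers of $\lambda_{q,i},\lambda_{q,i+1},\delta_q,\delta_{q+1},\delta_{q+2},M$ and check that the mollification/oscillation gain $(\lambda_{q,i}\lambda_{q,i+1})^{-1/2}\delta_q^{1/4}$ always outweighs the loss $\delta_{q+1}^{-1/2}$; making this margin nonnegative uniformly is exactly what forces the sharp exponents $\gamma=\gamma(\beta)$ and $\bar b=\frac{1+12\beta-36\gamma}{48\beta}$, equivalently the restriction $\beta<1/60$.
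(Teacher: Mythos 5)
Your proposal is correct and follows essentially the same route as the paper's proof: the same decomposition into $R_{L1},R_{L2},R_M,R_{O1},R_{O2}$, the same Taylor-type argument for $\nabla u_{\ell,i}-\nabla u_{\ell,i}^{s,\upsilon}$ in $R_{L2}$, the same cancellation in $R_{O1}$ (only slow derivatives of the amplitudes survive because the tensor structure $|f_{i+1}|^2\Id-f_{i+1}\otimes f_{i+1}$ kills the fast divergence along $f_{i+1}$), the same use of Lemma \ref{est on int operator} together with \eqref{prop of tn_0}, and the same reduction of \eqref{est on delta R_q i+1} and \eqref{est on R_q-R_l} to a single power-counting inequality fixed by the choice of $\gamma$ and $\bar b$. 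The only cosmetic deviations (describing the subtracted $I=J$ zero-frequency part as a ``spatial mean'', and quoting the $R_{L1}$ bound without the harmless $\lambda_{q,i+1}^{\gamma}$ loss coming from applying $\cR$ in the $C^{\gamma}$ norm) do not affect correctness.
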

We will consider \eqref{split of R_q i+1} and  estimate the separate terms $   R_{L},$ $ R_{M},$ and $R_{O}$. For the errors $R_{L2},$ $R_{O2},$ and $R_{M},$ we use a direct estimate. For $R_{L1}$ and $R_{O1},$ we use \eqref{est on R ae} in Lemma \ref{est on int operator}. Remark that 
\begin{equation}\label{rel.par}
\begin{aligned}
	\ell_{q,i}^2\lambda_{q,i}^2 \delta_{q+1}^{\frac{1}{2}}\delta_{q,i}^{\frac{1}{2}}+(\lambda_{q,i+1} \mu_{q,i})^{-1}\delta_{q+1}^\frac{1}{2}+(\tau_{q,i}+\mu_{q,i})\lambda_{q,i}\delta_{q+1}^{\frac{1}{2}}\delta_{q,i}^{\frac{1}{2}}&\lesssim  {\lambda_{q,i}^\frac{1}{2}}{\lambda_{q,i+1}^{-\frac{1}{2}}} \delta_q^\frac{1}{4}\delta_{q+1}^\frac{1}{2},&& 0\leqslant i\leqslant 5.
\end{aligned}
\end{equation}
\subsection{Estimates on the linear error}
Recalling  the definition of $R_{L1},$ we could calculate
\begin{align*}
 R_{L1}&=\cR\Big(\sum_{I}\left(\frac{\partial_{tt}u_{q,i+1,I}-\mu\Delta u_{q,i+1,I}-(\lambda+\mu)\nabla\Div u_{q,i+1,I}}{\lambda_{q,i+1}[I]}\right)(e^{i\lambda_{q,i+1}\xi_{A_I,f_{i+1}}}+e^{-i\lambda_{q,i+1}\xi_{A_I,f_{i+1}}})\\
	&\quad-\sum_{I}2i\left(\partial_{t}u_{q,i+1,I}((\lambda+2\mu)|f_{i+1}|^2-c_{A_I})^{\frac{1}{2}}+\mu (f_{i+1}\cdot\nabla) u_{q,i+1,I}\right)(e^{i\lambda_{q,i+1}\xi_{A_I,f_{i+1}}}-e^{-i\lambda_{q,i+1}\xi_{A_I,f_{i+1}}})\\
	&\quad-\sum_{I}(\lambda+\mu)i\left(\Div u_{q,i+1,I}f_{i+1}+\nabla( f_{i+1}\cdot u_{q,i+1,I})\right)(e^{i\lambda_{q,i+1}\xi_{A_I,f_{i+1}}}-e^{-i\lambda_{q,i+1}\xi_{A_I,f_{i+1}}})\\
	&\quad+\sum_{I}2i(\tilde{f}_{A_I}\cdot(\nabla u_{\ell,i}^{s, \upsilon}f_{i+1}))\nabla\gamma_{q,i+1,I}(e^{i\lambda_{q,i+1}\xi_{A_I,f_{i+1}}}-e^{-i\lambda_{q,i+1}\xi_{A_I,f_{i+1}}})\\
	&\quad-\sum_{I}i(f_{i+1}\otimes((\nabla u_{\ell,i}^{s, \upsilon})^{\top}\tilde{f}_{A_I})+((\nabla u_{\ell,i}^{s, \upsilon})^{\top}f_{A_I})\otimes f_{i+1})\nabla\gamma_{q,i+1,I}(e^{i\lambda_{q,i+1}\xi_{A_I,f_{i+1}}}-e^{-i\lambda_{q,i+1}\xi_{A_I,f_{i+1}}})+\partial_{tt}\tu_{q,i+1,t}\Big),
\end{align*}
and then, by using  $\nabla\xi_{A_I,f_{i+1}}=[I]f_{i+1},$  $\supp u_{q,i+1,I}\bigcap\supp u_{q,i+1,J}=\emptyset$ for $\|I-J\|>1,$ and Lemma \ref{est on int operator}, we have
\begin{align*}
\|\partial_t^rR_{L1}\|_{N}\leqslant\|\partial_t^rR_{L1}\|_{N+\gamma}
&\lesssim\sum_{I}\lambda_{q,i+1}^{N+r}\left(\frac{\|\partial_{tt}u_{q,i+1,I}\|_{0}+\|u_{q,i+1,I}\|_{2}}{\lambda_{q,i+1}^{2-\gamma}}+\frac{\|\partial_{tt}u_{q,i+1,I}\|_{n_0+\gamma}+\|u_{q,i+1,I}\|_{n_0+2+\gamma}}{\lambda_{q,i+1}^{n_0+1-\gamma}}\right)\\
&\quad+\sum_{I}\lambda_{q,i+1}^{N+r}\left(\frac{\|\partial_{t}u_{q,i+1,I}\|_{0}+\|u_{q,i+1,I}\|_{1}}{\lambda_{q,i+1}^{1-\gamma}}+\frac{\|\partial_{t}u_{q,i+1,I}\|_{n_0+\gamma}+\|u_{q,i+1,I}\|_{n_0+1+\gamma}}{\lambda_{q,i+1}^{n_0-\gamma}}\right)\\
&\quad+\sum_{I}\lambda_{q,i+1}^{N+r}\left(\frac{\|\nabla u_{\ell,i}^{s, \upsilon}\|_0\|\nabla \gamma_{q,i+1,I}\|_{0}}{\lambda_{q,i+1}^{1-\gamma}}+\frac{\|\nabla u_{\ell,i}^{s, \upsilon}\|_{0}\|\nabla \gamma_{q,i+1,I}\|_{n_0+\gamma}}{\lambda_{q,i+1}^{n_0-\gamma}}\right)\\
&\lesssim\lambda_{q,i+1}^{N+r}\left(\frac{\delta_{q+1}^{\frac{1}{2}}}{\lambda_{q,i+1}^{2-\gamma}\mu_{q,i}^{2}}+\frac{\delta_{q+1}^{\frac{1}{2}}}{\lambda_{q,i+1}^{n_0+1-\gamma}\mu_{q,i}^{n_0+2+\gamma}}+\frac{\delta_{q+1}^{\frac{1}{2}}}{\lambda_{q,i+1}^{1-\gamma}\mu_{q,i}}+\frac{\delta_{q+1}^{\frac{1}{2}}}{\lambda_{q,i+1}^{n_0-\gamma}\mu_{q,i}^{n_0+1+\gamma}}\right)\\
&\lesssim\lambda_{q,i+1}^{N+r}\frac{\delta_{q+1}^{\frac{1}{2}}}{\lambda_{q,i+1}^{1-\gamma}\mu_{q,i}}\lesssim\lambda_{q,i+1}^{N+r+\gamma}\lambda_{q,i}^{\frac{1}{2}}\lambda_{q,i+1}^{-\frac{1}{2}}\delta_q^{\frac{1}{4}}\delta_{q+1}^{\frac{1}{2}}.
\end{align*} 
For the other term $R_{L2}$ defined in \eqref{def of R_L2}, we rewrite it as 
\begin{align*}
R_{L2}
	&=\sum_{I}\frac{1}{\lambda_{q,i+1}[I]}\tr(2\nabla u_{\ell,i} ((w_{A_I,f_{i+1}}+\overline{w}_{A_I,f_{i+1}})\otimes\nabla\gamma_{q,i+1,I})^{\top})\Id\\
	&\quad-\sum_{I}\frac{1}{\lambda_{q,i+1}[I]} ((w_{A_I,f_{i+1}}+\overline{w}_{A_I,f_{i+1}})\otimes\nabla\gamma_{q,i+1,I})^{\top}\nabla u_{\ell,i}+(\nabla u_{\ell,i})^{\top}( (w_{A_I,f_{i+1}}+\overline{w}_{A_I,f_{i+1}})\otimes\nabla\gamma_{q,i+1,I})\\
	&\quad+\sum_{I}\frac{\gamma_{q,i+1,I}}{\lambda_{q,i+1}[I]}\tr(2 (\nabla u_{\ell,i}-\nabla u_{\ell,i}^{s, \upsilon})(\nabla (w_{A_I,f_{i+1}}+\overline{w}_{A_I,f_{i+1}}))^{\top})\Id\\
	&\quad-\sum_{I}\frac{\gamma_{q,i+1,I}}{\lambda_{q,i+1}[I]}(\nabla (w_{A_I,f_{i+1}}+\overline{w}_{A_I,f_{i+1}}))^{\top}(\nabla u_{\ell,i}-\nabla u_{\ell,i}^{s, \upsilon})+( \nabla u_{\ell,i}- \nabla u_{\ell,i}^{s, \upsilon})^{\top}\nabla (w_{A_I,f_{i+1}}+\overline{w}_{A_I,f_{i+1}}).
\end{align*}
Notice that, for $(t,x)\in \supp\theta_I(t)\chi_I(x),$ $I=(s,\upsilon)\in\mathscr{I},$ we have
$
|t-s\tau_{q,i}|\lesssim\tau_{q,i}, \ |x-\upsilon \mu_{q,i}|\lesssim\mu_{q,i},
$
and then 
\begin{align*}
\|\partial_t^r(\nabla u_{\ell,i}(t,x)-\nabla u_{\ell,i}^{s, \upsilon})\|_N&\lesssim\|\partial_t^r(\nabla u_{\ell,i}(t,x)-\nabla u_{\ell,i}(s\tau_{q,i},x))\|_N+\|\partial_t^r(\nabla u_{\ell,i}(s\tau_{q,i},x)- \nabla u_{\ell,i}(s\tau_{q,i}, \upsilon\mu_{q,i}))\|_N\\
&\lesssim\|\partial_t^{r+1} u_{\ell,i}\|_{C^{0}(\cI^{q,i}_{3\ell_{q,i}};C^{N+1}(\T^3))}|t-s\tau_{q,i}|+\|\partial_t^{r} u_{\ell,i}\|_{C^{0}(\cI^{q,i}_{3\ell_{q,i}};C^{N+2}(\T^3))}|x-\upsilon\mu_{q,i}|\\
&\lesssim\ell_{q,i}^{-N-r}\lambda_{q,i}\delta_{q,i}^{\frac{1}{2}}(\tau_{q,i}+\mu_{q,i}),\quad N+r\geqslant0,
\end{align*}
where we have used \eqref{est on u_li 0}--\eqref{est on u_li 1}. 
So we could obtain
\begin{align*}
	\left\|\partial_t^rR_{L2}\right\|_{N}&\lesssim\sum_{N_1+N_2+N_3=N}\sum_{r_1+r_2+r_3=r}\lambda_{q,i+1}^{-1}\|\partial_t^{r_1}(\nabla u_{\ell,i}(t,x))\|_{N_1}\|\partial_t^{r_2}w_{A_I,f_{i+1}}\|_{N_2}\|\partial_t^{r_3}\nabla\gamma_{q,i+1,I}\|_{N_3}\\
	&\quad+\sum_{N_1+N_2+N_3=N}\sum_{r_1+r_2+r_3=r}\lambda_{q,i+1}^{-1}\|\partial_t^{r_1}(\nabla u_{\ell,i}(t,x)-\nabla u_{\ell,i}^{s, \upsilon})\|_{N_1}\|\partial_t^{r_2}(\nabla w_{A_I,f_{i+1}})\|_{N_2}\|\partial_t^{r_3}\gamma_{q,i+1,I}\|_{N_3}\\
	&\lesssim\lambda_{q,i+1}^{N+r}\left(\frac{\delta_{q+1}^{\frac{1}{2}}}{\lambda_{q,i+1}\mu_{q,i}}+(\tau_{q,i}+\mu_{q,i})\lambda_{q,i}\delta_{q,i}^{\frac{1}{2}}\delta_{q+1}^{\frac{1}{2}}\right)\\
	&\lesssim\lambda_{q,i+1}^{N+r}\lambda_{q,i}^{\frac{1}{2}}\lambda_{q,i+1}^{-\frac{1}{2}}\delta_q^{\frac{1}{4}}\delta_{q+1}^{\frac{1}{2}}.
\end{align*}
\subsection{Estimates on the mediation error}
Recall that 
\begin{align*}
R_{M}&=\tr(2\nabla (u_{q,i}-u_{\ell,i})(\nabla \tu_{q,i+1})^{\top})\Id-(\nabla \tu_{q,i+1})^\top\nabla (u_{q,i}-u_{\ell,i})-(\nabla (u_{q,i}-u_{\ell,i}))^\top\nabla \tu_{q,i+1}.
\end{align*}
By using \eqref{est on u_qi-u_li}, \eqref{est on u_qi-u_li}, and \eqref{est on tu_q}, we have 
\begin{align*}
\left\|\partial_t^rR_{M}\right\|_{N}&\lesssim\sum_{N_1+N_2=N}\sum_{r_1+r_2=r}\|\partial_t^{r_1}(u_{q,i}-u_{\ell,i})\|_{N_1+1}\|\partial_t^{r_2}\tu_{q,i+1}\|_{N_2+1}
\lesssim\lambda_{q,i+1}^{N+r}\ell_{q,i}^2\lambda_{q,i}^2\delta_{q,i}^{\frac{1}{2}}\delta_{q+1}^{\frac{1}{2}}\lesssim\lambda_{q,i+1}^{N+r}\lambda_{q,i}^{\frac{1}{2}}\lambda_{q,i+1}^{-\frac{1}{2}}\delta_q^{\frac{1}{4}}\delta_{q+1}^{\frac{1}{2}}.
\end{align*}
\subsection{Estimates on the oscillation error}
After simple calculation, we have
$$
|f_{i+1}|^2([I]\pm [J])f_{i+1}=(f_{i+1}\cdot([I]\pm [J])f_{i+1})f_{i+1},\quad \text{if}\ \|I-J\|\leqslant1.
$$ 
Combining it with $\supp u_{q,i+1,I}\bigcap\supp u_{q,i+1,J}=\emptyset,$ for $\|I-J\|>1,$ we could obtain
\begin{align*}
	&\quad\Div\left(\tr(\tilde{w}_{q,i+1,p}(\tilde{w}_{q,i+1,p})^{\top})\Id-(\tilde{w}_{q,i+1,p})^\top\tilde{w}_{q,i+1,p}-2|f_{i+1}|^2\sum_I|u_{q,i+1,I}|^2\left(\Id-\frac{f_{i+1}}{|f_{i+1}|}\otimes \frac{f_{i+1}}{|f_{i+1}|}\right)\right)\\
	&=\Div\left(\sum_I\sum_{J}|f_{i+1}|^2u_{q,i+1,I}\cdot u_{q,i+1,J}\left(e^{i\lambda_{q,i+1}(\xi_{A_I,f_{i+1}}-\xi_{A_J,f_{i+1}})}+e^{i\lambda_{q,i+1}(-\xi_{A_I,f_{i+1}}+\xi_{A_J,f_{i+1}})}\right)\Id\right)\\
	&\quad-\Div\left(\sum_I\sum_{J}|f_{i+1}|^2u_{q,i+1,I}\cdot u_{q,i+1,J}\left(e^{i\lambda_{q,i+1}(\xi_{A_I,f_{i+1}}+\xi_{A_J,f_{i+1}})}+e^{-i\lambda_{q,i+1}(\xi_{A_I,f_{i+1}}+\xi_{A_J,f_{i+1}})}\right)\Id\right)\\
	&\quad-\Div\left(\sum_I\sum_{J}u_{q,i+1,I}\cdot u_{q,i+1,J}\left(e^{i\lambda_{q,i+1}(\xi_{A_I,f_{i+1}}-\xi_{A_J,f_{i+1}})}+e^{i\lambda_{q,i+1}(-\xi_{A_I,f_{i+1}}+\xi_{A_J,f_{i+1}})}\right)f_{i+1}\otimes f_{i+1}\right)\\
	&\quad+\Div\left(\sum_I\sum_{J}u_{q,i+1,I}\cdot u_{q,i+1,J}\left(e^{i\lambda_{q,i+1}(\xi_{A_I,f_{i+1}}+\xi_{A_J,f_{i+1}})}+e^{-i\lambda_{q,i+1}(\xi_{A_I,f_{i+1}}+\xi_{A_J,f_{i+1}})}\right)f_{i+1}\otimes f_{i+1}\right)\\
	&\quad-\Div\left(\sum_{I}2|f_{i+1}|^2|u_{q,i+1,I}|^2\left(\Id-\frac{f_{i+1}}{|f_{i+1}|}\otimes\frac{f_{i+1}}{|f_{i+1}|}\right)\right)\\
	&=\sum_{\|I-J\|=1}U_{I,J}\left(e^{i\lambda_{q,i+1}(\xi_{A_I,f_{i+1}}-\xi_{A_J,f_{i+1}})}+e^{i\lambda_{q,i+1}(-\xi_{A_I,f_{i+1}}+\xi_{A_J,f_{i+1}})}-e^{i\lambda_{q,i+1}(\xi_{A_I,f_{i+1}}+\xi_{A_J,f_{i+1}})}-e^{-i\lambda_{q,i+1}(\xi_{A_I,f_{i+1}}+\xi_{A_J,f_{i+1}})}\right)\\
	&\quad-\sum_{I=J}\Div\left(\sum_{I}|f_{i+1}|^2|u_{q,i+1,I}|^2\left(\Id-\frac{f_{i+1}}{|f_{i+1}|}\otimes\frac{f_{i+1}}{|f_{i+1}|}\right)(e^{i2\lambda_{q,i+1}\xi_{A_I,f_{i+1}}}+e^{-i2\lambda_{q,i+1}\xi_{A_I,f_{i+1}}})\right)\\
	&=\sum_{\|I-J\|=1}U_{I,J}\left(e^{i\lambda_{q,i+1}(\xi_{A_I,f_{i+1}}-\xi_{A_J,f_{i+1}})}+e^{i\lambda_{q,i+1}(-\xi_{A_I,f_{i+1}}+\xi_{A_J,f_{i+1}})}-e^{i\lambda_{q,i+1}(\xi_{A_I,f_{i+1}}+\xi_{A_J,f_{i+1}})}-e^{-i\lambda_{q,i+1}(\xi_{A_I,f_{i+1}}+\xi_{A_J,f_{i+1}})}\right)\\
	&\quad-\sum_{I}U_{I,I}\left(e^{i2\lambda_{q,i+1}\xi_{A_I,f_{i+1}}}+e^{-i2\lambda_{q,i+1}\xi_{A_I,f_{i+1}}}\right),
\end{align*}
where
$
U_{I,J}=|f_{i+1}|^2\nabla(u_{q,i+1,I}\cdot u_{q,i+1,J})-(f_{i+1}\cdot\nabla)(u_{q,i+1,I}\cdot u_{q,i+1,J})f_{i+1}
.$ Then, we could use  $\nabla\xi_{A_I,f_{i+1}}=[I]f_{i+1},$ and Lemma \ref{est on int operator} to obtain
\begin{align*}
	\left\|\partial_t^rR_{O1}\right\|_{N+\gamma}
	&\lesssim\lambda_{q,i+1}^{N+r}\sum_{\|I-J\|\leqslant1}\sum_{r_1+r_2=r}\frac{\|u_{q,i+1,I}\|_{1}\|u_{q,i+1,J}\|_{0}+\|u_{q,i+1,J}\|_{1}\|u_{q,i+1,I}\|_{0}}{\lambda_{q,i+1}^{1-\gamma}}\\
	&\quad+\lambda_{q,i+1}^{N+r}\sum_{\|I-J\|\leqslant1}\sum_{r_1+r_2=r}\sum_{N_1+N_2=n_0+1}\frac{\|u_{q,i+1,I}\|_{N_1+1+\gamma}\|u_{q,i+1,J}\|_{N_2+\gamma}+\|u_{q,i+1,J}\|_{N_1+1+\gamma}\|u_{q,i+1,I}\|_{N_2+\gamma}}{\lambda_{q,i+1}^{n_0-\gamma}}\\
	&\lesssim\lambda_{q,i+1}^{N+r}\left(\frac{\delta_{q+1}}{\lambda_{q,i+1}^{1-\gamma}\mu_{q,i}}+\frac{\delta_{q+1}}{\lambda_{q,i+1}^{n_0-\gamma}\mu_{q,i}^{n_0+1+2\gamma}}\right)\lesssim\lambda_{q,i+1}^{N+r}\frac{\delta_{q+1}}{\lambda_{q,i+1}^{1-\gamma}\mu_{q,i}}\lesssim\lambda_{q,i+1}^{N+r+\gamma}\lambda_{q,i}^{\frac{1}{2}}\lambda_{q,i+1}^{-\frac{1}{2}}\delta_q^{\frac{1}{4}}\delta_{q+1}.
\end{align*}
Combine it with 
\begin{align*}
\|\partial_t^rR_{O2}\|_{N}&\lesssim\sum_{N_1+N_2=N}\sum_{r_1+r_2=r}\left(\|\nabla \partial^{r_1}_t\tilde{w}_{q,i+1,p}\|_{N_1}\|\nabla \partial^{r_2}_t\tilde{w}_{q,i+1,c}\|_{N_2}+\|\nabla \partial^{r_1}_t\tilde{w}_{q,i+1,c}\|_{N_1}\|\nabla \partial^{r_2}_t\tilde{w}_{q,i+1,c}\|_{N_2}\right)\\
&\lesssim\lambda_{q,i+1}^{N+r}\left(\frac{\delta_{q+1}}{\lambda_{q,i+1}\mu_{q,i}}+\frac{\delta_{q+1}}{(\lambda_{q,i+1}\mu_{q,i})^2}\right)\lesssim\lambda_{q,i+1}^{N+r+\gamma}\lambda_{q,i}^{\frac{1}{2}}\lambda_{q,i+1}^{-\frac{1}{2}}\delta_q^{\frac{1}{4}}\delta_{q+1},
\end{align*}
where we have used \eqref{est on nabla tu_q p} and \eqref{est on nabla tu_q c}, we could get
\begin{align*}
		\|\partial_t^rR_{O}\|_{N}\leqslant\|\partial_t^rR_{O1}\|_{N}+\|\partial_t^rR_{O2}\|_{N} \lesssim\lambda_{q,i+1}^{N+r+\gamma}\lambda_{q,i}^{\frac{1}{2}}\lambda_{q,i+1}^{-\frac{1}{2}}\delta_q^{\frac{1}{4}}\delta_{q+1}.
\end{align*}
To sum up, we have \eqref{est on delta R_q i+1}. \eqref{est on R_q-R_l} can be deduced from \eqref{est on R_qi-R_li}.
\section{Proof of the main theorem}\label{Proof of the main theorem}
\subsection{Proof of Proposition \ref{Inductive proposition}}
For any $0 < \beta < \frac{1}{60},$ let the parameters $\varepsilon_1,$ $\bar{b}(\beta) = \frac{1 + 12 \beta - 36 \gamma}{48 \beta},$ and $a^*_0$ be as in the statement of Proposition \ref{prop of Reynolds error}. For any $a > a^*_0$ and $\varepsilon < \varepsilon_1,$ given an approximate solution $(u_q, c_q, R_q)$ defined on $\cI^{q,-1} \times \T^3,$ we have constructed a perturbation $\tilde{u}_q = \sum_{i=1}^6 \tilde{u}_{q,i}$ and will apply it to $u_q$. This creates a new Reynolds stress $R_{q+1},$ which satisfies the estimates in Proposition \ref{prop of Reynolds error}. We now need to confirm whether $(u_{q+1}, c_{q+1}, R_{q+1})$ satisfy \eqref{est on u_q}--\eqref{est on R_q} at the $q+1$ step.

First, we denote the implicit constants in \eqref{est on tu_q}, which does not depend on $M,$ by $M_0$ and set $M = 120M_0$. If we set $\|\cdot\|_N = \|\cdot\|_{C^0(\cI^{q,-1}; C^N(\T^3))},$ we have

\begin{equation}\label{Induction Proposition}
	\begin{aligned}
		\sum_{N+r\leqslant3}\lambda_{q+1}^{1-N-r}\|\partial_{t}^r(u_{q+1}-u_q)\|_N\leqslant\sum_{N+r\leqslant3}\sum_{i=1}^6\lambda_{q,i}^{1-N-r}\|\partial_{t}^r\tu_{q,i}\|_N\leqslant60M_0\delta_{q+1}^{\frac{1}{2}}\leqslant M\delta_{q+1}^{\frac{1}{2}}.
	\end{aligned}
\end{equation}
Moreover, we could get for sufficient large $a$ and for $2\leqslant N+r\leqslant 3$,
\begin{align*}
	\|u_{q+1}\|_0&\leqslant\|u_{q}\|_0+\sum_{i=1}^6\|\tu_{q,i}\|_0\leqslant \varepsilon-\delta_{q}^{\frac{1}{2}}+M_0\sum_{i=1}^6\lambda_{q,i}^{-1}\delta_{q+1}^{\frac{1}{2}}\leqslant \varepsilon-\delta_{q+1}^{\frac{1}{2}}, \\
	\|\nabla u_{q+1}\|_0&\leqslant\|\nabla u_{q}\|_0+\sum_{i=1}^6\|\nabla  \tu_{q,i}\|_0\leqslant \varepsilon-\delta_{q}^{\frac{1}{2}}+M_0\sum_{i=1}^6\delta_{q+1}^{\frac{1}{2}}\leqslant \varepsilon-\delta_{q+1}^{\frac{1}{2}}, \\
	\|\partial_t u_{q+1}\|_0&\leqslant\|\partial_t u_{q}\|_0+\sum_{i=1}^6\|\partial_t \tu_{q,i}\|_0\leqslant \varepsilon-\delta_{q}^{\frac{1}{2}}+M_0\sum_{i=1}^6\delta_{q+1}^{\frac{1}{2}}\leqslant \varepsilon-\delta_{q+1}^{\frac{1}{2}}, \\
	\|\partial_{t}^ru_{q+1}\|_N&\leqslant\|\partial_{t}^ru_{q}\|_N+\sum_{i=1}^6\|\partial_{t}^r\tu_{q,i}\|_N\leqslant M\lambda_{q}^{N+r-1}\delta_{q}^{\frac{1}{2}}+\sum_{i=1}^6\frac{1}{120}M\lambda_{q,i}^{N+r-1}\delta_{q+1}^{\frac{1}{2}}\leqslant M\lambda_{q+1}^{N+r-1}\delta_{q+1}^{\frac{1}{2}}.
\end{align*}
The estimates on $R_{q+1}$ can be obtained from Proposition \ref{prop of Reynolds error}. Let the new Reynolds error be
\begin{align*}
	R_{q+1}&=R_{q.6}-\delta_{q+1}\Id\\
	&=R_{q}-\delta_{q+1}\Id+\sum_{i=1}^6(\delta_{q+1}^{\frac{1}{2}}\Gamma_{f_{i}}(\Id-\delta_{q+1}^{-1}R_{\ell}))^2\left(\Id-\frac{f_{i}}{|f_{i}|}\otimes \frac{f_{i}}{|f_{i}|}\right)+\sum_{i=1}^6\delta R_{q,i}\\
	&=R_{q}-R_{\ell}+\sum_{i=1}^6\delta R_{q,i}.
\end{align*}
Then, we could use \eqref{est on delta R_q i+1} and \eqref{est on R_q-R_l} to obtain for $0\leqslant N+r\leqslant 2$
\begin{align*}
\|\partial_t^rR_{q+1}\|_{N}&\leqslant\|\partial_t^r(R_{q}-R_{\ell})\|_{N}+\sum_{i=1}^6\|\partial_t^r\delta R_{q,i}\|_{N}\leqslant \lambda_{q+1}^{N+r-2\gamma} \delta_{q+2}.
\end{align*}
\subsection{Proof of Proposition \ref{Bifurcating inductive proposition}}
Similar to \cite{DK22,GK22}, we consider a given time interval $\cal I \subset (0,T)$ with $|\cal I| \geqslant 3\tau_{q,-1},$ within which we can always find $s_0$ such that $\supp(\theta_{s_0}(\tau_{q,5}^{-1}\cdot)) \subset \cal I$. If $I = (s_0, \upsilon) \in \mathscr{I},$ we replace $\Gamma_{f_6}$ in $\tilde{u}_{q,6}$ with $\tilde{\Gamma}_{f_6} = -\Gamma_{f_6},$ which will make $\tilde{\gamma}_{q,6,I} = -\gamma_{q,6,I}$. We denote the new perturbation by $\tilde{u}_{q,6,\text{new}}$. As for the other tuples, we do not change $\gamma_{q,6,I}$. Note that $\tilde{\Gamma}_{f_6}^2 = \Gamma_{f_6}^2,$ this allows us to use the same formula \eqref{Geometric lemma 1}, and the replacement does not change the estimates on $\tilde{\Gamma}_{f_6}$. Therefore, the estimates on the new perturbation $\tilde{u}_{q,6,\text{new}}$ are the same as those on $\tilde{u}_{q,6}$. Up to now, we could construct the new corrected approximate solution $(\overline{u}_{q+1}, c_{q+1}, \overline{R}_{q+1})$ that satisfies \eqref{est on u_q}--\eqref{est on R_q} at the $q+1$ step, where $\overline{u}_{q+1} = u_q + \sum_{i=1}^{5}\tilde{u}_{q,i} + \tilde{u}_{q,6,\text{new}}$. By the construction, the correction $\tilde{u}_{q,6,\text{new}}$ differs from $\tilde{u}_{q,6}$ on the support of $\theta_{s_0}(\tau_{q,5}^{-1}\cdot)$. Therefore, $\supp_t(\overline{u}_{q+1} - u_{q+1}) = \supp_t(\tilde{u}_{q,6} - \tilde{u}_{q,6,\text{new}}) \subset \cal I$. 
Recall the definition of $\tu_{q,6}$ ,
\begin{align*}
\tilde{u}_{q,6}&=\Large\sum_{I\in \mathscr I: s_I = s_0}\left(
\frac{\theta_I(t)\chi_I(x)d_{q,6}\tilde{f}_{A_I}}{\sqrt{2}\lambda_{q+1}[I]|\tilde{f}_{A_I}||f_{6}|}(e^{i\lambda_{q,i+1}\xi_{A_I,f_{6}}}+e^{-i\lambda_{q,i+1}\xi_{A_I,f_{6}}})-\int_{\T^3}\frac{\theta_I(t)\chi_I(x)d_{q,6}\tilde{f}_{A_I}}{\sqrt{2}\lambda_{q+1}[I]|\tilde{f}_{A_I}||f_{6}|}(e^{i\lambda_{q,i+1}\xi_{A_I,f_{6}}}+e^{-i\lambda_{q,i+1}\xi_{A_I,f_{6}}})\rd x\right),
\end{align*}
and $\tu_{q,6,new}=-\tu_{q,6}$, we could obtain
\begin{align*}
	&\quad|\tu_{q,6,new} -  \tu_{q,6} |^2\\
	&=  \sum_{I\in \mathscr I: s_I = s_0}\left(
	\frac{2\theta_I^2(t) \chi_I^2(x) d_{q,6}^2}{\lambda_{q+1}^2[I]^2|f_6|^2}  (e^{i2\lambda_{q+1}\xi_{A_I,f_{6}}}+e^{-i2\lambda_{q+1}\xi_{A_I,f_{6}}}+2)+2\left(\int_{\T^3}\frac{\theta_I(t)\chi_I(x)d_{q,6}\tilde{f}_{A_I}}{\lambda_{q+1}[I]|\tilde{f}_{A_I}||f_{6}|}(e^{i\lambda_{q,i+1}\xi_{A_I,f_{6}}}+e^{-i\lambda_{q,i+1}\xi_{A_I,f_{6}}})\rd x\right)^2\right)\\
	&\quad-\sum_{I\in \mathscr I: s_I = s_0}\frac{4\theta_I(t)\chi_I(x)d_{q,6}\tilde{f}_{A_I}}{\lambda_{q+1}[I]|\tilde{f}_{A_I}||f_{6}|}(e^{i\lambda_{q,i+1}\xi_{A_I,f_{6}}}+e^{-i\lambda_{q,i+1}\xi_{A_I,f_{6}}})\left(\int_{\T^3}\frac{\theta_I(t)\chi_I(x)d_{q,6}\tilde{f}_{A_I}}{\lambda_{q+1}[I]|\tilde{f}_{A_I}||f_{6}|}(e^{i\lambda_{q,i+1}\xi_{A_I,f_{6}}}+e^{-i\lambda_{q,i+1}\xi_{A_I,f_{6}}})\rd x\right).
\end{align*}
Notice that 
\begin{align*}
d_{q,6}^2&=\delta_{q+1}\Gamma_6^2(\Id-\delta_{q+1}^{-1}R_\ell)\\
&=\frac{\delta_{q+1}}{4}(3(\Id-\delta_{q+1}^{-1}R_\ell)_{33}+4(\Id-\delta_{q+1}^{-1}R_\ell)_{12}-(\Id-\delta_{q+1}^{-1}R_\ell)_{11}-(\Id-\delta_{q+1}^{-1}R_\ell)_{22})\\
&=\frac{\delta_{q+1}}{4}(1-3(\delta_{q+1}^{-1}R_\ell)_{33}-4(\delta_{q+1}^{-1}R_\ell)_{12}+(\delta_{q+1}^{-1}R_\ell)_{11}+(\delta_{q+1}^{-1}R_\ell)_{22})\\
&\geqslant\frac{\delta_{q+1}}{4}(1-9r_0)\geqslant\frac{\delta_{q+1}}{8},
\end{align*}
we could use Lemma \ref{est on int operator} to get
\begin{align*}
	&\quad\sum_{I\in \mathscr I: s_I = s_0}\int_{\T^3}
	\frac{2\theta_I^2(t) \chi_I^2(x) d_{q,6}^2}{\lambda_{q+1}^2[I]^2|f_6|^2}  (e^{i2\lambda_{q+1}\xi_{A_I,f_{6}}}+e^{-i2\lambda_{q+1}\xi_{A_I,f_{6}}})\rd x\\
	&\lesssim_{n_0}\sum_{I\in \mathscr I: s_I = s_0}\frac{2}{\lambda_{q+1}^2[I]^2|f_6|^2}\cdot \frac{\|\theta_I^2(t) \chi_I^2(x) d_{q,6}^2\|_0+\|\theta_I^2(t) \chi_I^2(x) d_{q,6}^2\|_{n_0}}{(\lambda_{q+1}|f_6|)^{n_0}}\\
	&\lesssim_{n_0}\frac{\delta_{q+1}}{\lambda_{q+1}^{n_0+2}\mu_{q,5}^{n_0}}	\lesssim_{n_0}\frac{\delta_{q+1}}{\lambda_{q+1}^{3}}\leqslant\frac{\pi^3 \delta_{q+1}}{256\lambda_{q+1}^2|f_6|^2},
\end{align*}
and similarly,
\begin{align*}
	&\quad\sum_{I\in \mathscr I: s_I = s_0}\int_{\T^3}
	\frac{\theta_I(t)\chi_I(x)d_{q,6}\tilde{f}_{A_I}}{\lambda_{q+1}[I]|\tilde{f}_{A_I}||f_{6}|}(e^{i\lambda_{q,i+1}\xi_{A_I,f_{6}}}+e^{-i\lambda_{q,i+1}\xi_{A_I,f_{6}}})\rd x\\
	&\lesssim_{n_0}\sum_{I\in \mathscr I: s_I = s_0}\frac{1}{\lambda_{q+1}[I]|f_6|}\cdot \frac{\|\theta_I(t) \chi_I(x) d_{q,6}\|_0+\|\theta_I(t) \chi_I(x) d_{q,6}\|_{n_0}}{(\lambda_{q+1}|f_6|)^{n_0}}\\
	&\lesssim_{n_0}\frac{\delta_{q+1}^{\frac{1}{2}}}{\lambda_{q+1}^{n_0+1}\mu_{q,5}^{n_0}}	\lesssim_{n_0}\frac{\delta_{q+1}^{\frac{1}{2}}}{\lambda_{q+1}^{2}}\leqslant\frac{\pi^{\frac{3}{2}} \delta_{q+1}^{\frac{1}{2}}}{16\sqrt{2}\lambda_{q+1}|f_6|},
\end{align*}
for sufficiently large $a$. Then, for $t \in \supp \theta_{s_0}(\tau_{q,5}^{-1}\cdot) $ and satisfy $\theta_I(t)= \theta_{s_0}(\tau_{q,5}^{-1}t)=1,$ we have,
\begin{align*}
	\int_{\T^3}|\tu_{q,6,new} -  \tu_{q,6} |^2dx&=\sum_{I\in \mathscr I: s_I = s_0}\int_{\T^3}\frac{4\theta_I^2(t) \chi_I^2(x) d_{q,6}^2}{\lambda_{q+1}^2[I]^2|f_6|^2}\rd x+\sum_{I\in \mathscr I: s_I = s_0}\int_{\T^3}
	\frac{2\theta_I^2(t) \chi_I^2(x) d_{q,6}^2}{\lambda_{q+1}^2[I]^2|f_6|^2}  (e^{i2\lambda_{q+1}\xi_{A_I,f_{6}}}+e^{-i2\lambda_{q+1}\xi_{A_I,f_{6}}})\rd x
	\\
	&\quad-2\left(\int_{\T^3}\frac{\theta_I(t)\chi_I(x)d_{q,6}\tilde{f}_{A_I}}{\lambda_{q+1}[I]|\tilde{f}_{A_I}||f_{6}|}(e^{i\lambda_{q,i+1}\xi_{A_I,f_{6}}}+e^{-i\lambda_{q,i+1}\xi_{A_I,f_{6}}})\rd x\right)^2\\
	&\geqslant\frac{\pi^3 \delta_{q+1}}{64\lambda_{q+1}^2|f_6|^2}-\frac{\pi^3 \delta_{q+1}}{128\lambda_{q+1}^2|f_6|^2}=\frac{\pi^3 \delta_{q+1}}{256\lambda_{q+1}^2},
\end{align*}
where we have used $[I]\leqslant 16$. Therefore, we obtain
\begin{align*}
	\|\overline{u}_{q+1} -  u_{q+1} \|_{C^0([0,T]; L^2(\T^3))}
	=\|\tu_{q,6,new}-\tu_{q,6}\|_{C^0([0,T]; L^2(\T^3))}\geqslant \frac{\pi^{\frac{3}{2}} \delta_{q+1}^{\frac{1}{2}}}{16\lambda_{q+1}}.
\end{align*}
Assume we are given an  approximate solutions $(u_q, c_q, R_q)$  that satisfies \eqref{est on u_q}--\eqref{est on R_q} and
$$
\supp_t(\overline{u}_q - u_q, \overline{R}_q - R_q) \subset \mathcal{J},
$$
for some time interval $\mathcal{J}$. We can then construct the regularized Reynolds errors $R_\ell$ and $\overline{R}_\ell$. Notice that they differ only in $\mathcal{J} + \ell_{q,0} \subset \mathcal{J} + (\lambda_q \delta_q^{\frac{1}{4}})^{-1}$. As a result, $\tilde{u}_{q,6,\text{new}}$ differs from $\tilde{u}_{q,6}$ in $\mathcal{J} + (\lambda_q \delta_q^{\frac{1}{4}})^{-1},$ and we can obtain different corrected approximate solutions $(\overline{u}_{q+1}, c_{q+1}, \overline{R}_{q+1})$ and $(u_{q+1}, c_{q+1}, R_{q+1})$ satisfying 
$$
\supp_t(\overline{u}_{q+1} - u_{q+1}, \overline{R}_{q+1} - R_{q+1}) \subset \mathcal{J} + (\lambda_q \delta_q^{\frac{1}{4}})^{-1}.
$$
\subsection{Proof of Theorem \ref{thm 1} }\label{Proof of the theorems}
For convenience, we assume that $T \geqslant 20$ in this argument. We fix $0 < \alpha < \frac{1}{60}$ and $\beta \in (\alpha, \frac{1}{60})$. We choose $\varepsilon,$ $b,$ and $a$ based on Proposition \ref{Inductive proposition}. In Section \ref{Construction of the starting tuple}, we constructed an initial approximate solution $(u_0, c_0, R_0)$ which solves \eqref{approximation elstro dynamic} on $\cI^{-1} \times \T^3$ and satisfies \eqref{est on u_q} and \eqref{est on R_q}.

We can apply Proposition \ref{Inductive proposition} iteratively to produce a sequence of approximate solutions $(u_q, c_q, R_q),$ which solve \eqref{approximation elstro dynamic} and satisfy \eqref{est on u_q}--\eqref{Proposition of induction}.

First, we prove that $u_q$ is a Cauchy sequence in $C^0([0, T]; C^{1, \alpha}(\T^3))$. For any $q \leqslant q',$ we have
\begin{align*}
	\|u_{q'} - u_q\|_{C^0([0, T]; C^{1, \alpha}(\T^3))}
	&\leqslant \sum_{l=1}^{q'-q} \|u_{q+l} - u_{q+l-1}\|_{C^0([0, T]; C^{1, \alpha}(\T^3))} 
	\leqslant \sum_{l=1}^{q'-q} \|u_{q+l} - u_{q+l-1}\|_1^{1-\alpha} \|u_{q+l} - u_{q+l-1}\|_2^\alpha 
	= \sum_{l=1}^{q'-q} \lambda_{q+l}^{\alpha-\beta}.
\end{align*}
Notice $\sum_{l=1}^{q'-q} \lambda_{q+l}^{\alpha-\beta}$ will converge to $0$ as $q$ goes to infinity. Thus, $u_q$ converges to a limit 
$
u \in C^0([0, T], C^{1, \alpha}(\T^3)).
$
Similarly, the time regularity follows from \eqref{Proposition of induction} such that
$
u \in C^{1, \alpha}([0, T], C^0(\T^3)).
$
Hence,
$
u \in C^{1, \alpha'}([0, T] \times \T^3),
$
for $\alpha' < \alpha < \beta < \frac{1}{60}$. Moreover, $c_q$ and $R_q$ converge to $0$ in $C^0([0, T] \times \T^3)$.

Next, we construct two distinct tuples by using Proposition \ref{Bifurcating inductive proposition}. At the $\bar{q}$-th step, we produce two distinct tuples $(u_q, c_q, R_q)$ and $(\overline{u}_q, c_q, \overline{R}_q)$ which satisfy Proposition \ref{Bifurcating inductive proposition}, and we have 
\begin{align*}
	&\|\overline{u}_{\bar{q}} - u_{\bar{q}}\|_{C^0([0, T]; L^2(\T^3))} \geqslant \frac{\pi^{\frac{3}{2}} \delta_{q+1}^{\frac{1}{2}}}{16 \lambda_{q+1}}, \quad \supp_t (u_{\bar{q}} - \overline{u}_{\bar{q}}) \subset \mathcal{I},
\end{align*}
with $\mathcal{I} = (10, 10 + 3 \tau_{\bar{q}, -1})$.

Next, we apply Proposition \ref{Inductive proposition} iteratively to build a new sequence $(\overline{u}_q, c_q, \overline{R}_q)$ which satisfies \eqref{est on u_q}, \eqref{est on R_q}, and \eqref{Proposition of induction}. This new sequence also converges to a solution $\overline{u}$ to the quasi-linear wave equations, and
$
\overline{u} \in C^{1, \alpha'}([0, T] \times \T^3).
$
Moreover, $\overline{u}_q$ shares initial data with $u_q$ for all $q,$ because for any $q \geqslant \bar{q}$ and for $a$ large enough, we have
$$
\supp_t(u_q - \overline{u}_q) \subset \mathcal{I} + \sum_{q=\bar{q}}^\infty (\lambda_q \delta_q^\frac{1}{4})^{-1} \subset [9, T],
$$
and thus the two solutions $\overline{u}$ and $u$ have the same initial data. However, $\overline{u}$ differs from $u$ because
\begin{align*}
	\|u - \overline{u}\|_{C^0([0, T]; L^2(\T^3))}
	&\geqslant \|u_{\bar{q}} - \overline{u}_{\bar{q}}\|_{C^0([0, T]; L^2(\T^3))} - \sum_{q=\bar{q}}^\infty \|u_{q+1} - u_q - (\overline{u}_{q+1} - \overline{u}_q)\|_{C^0([0, T]; L^2(\T^3))} \\
	&\geqslant \|u_{\bar{q}} - \overline{u}_{\bar{q}}\|_{C^0([0, T]; L^2(\T^3))} - (2\pi)^{\frac{3}{2}} \sum_{q=\bar{q}}^\infty (\|u_{q+1} - u_q\|_0 + \|\overline{u}_{q+1} - \overline{u}_q\|_0) \\
	&\geqslant \frac{\pi^{\frac{3}{2}} \delta_{\bar{q}}^{\frac{1}{2}}}{16 \lambda_{\bar{q}}} - 2 (2\pi)^{\frac{3}{2}} M \sum_{q=\bar{q}}^\infty \frac{\delta_{q+1}^\frac{1}{2}}{\lambda_{q+1}} > 0,
\end{align*}
if we choose $a$ large enough.

By changing the choice of time interval $\mathcal{I}$ and the choice of $\bar{q},$ we can generate infinitely many solutions in a similar way.

\appendix
\section{H\"{o}lder spaces}
In this section, we introduce the notations we would use for H{\"o}lder spaces. For some time interval $\cI\subset\R,$ we denote the supremum norm as  $\|f\|_0=\left\|f\right\|_{C^0(\cI;C^0(\T^3))}:=\underset{(t,x)\in\cI\times\T^3}{\sup}|f(t,x)|$. For $N\in\N,$ a multi-index $k=(k_1,k_2,k_3)\in\N^3$ and $\alpha\in(0,1],$ we denote the H\"{o}lder seminorms as
$$
\begin{aligned}
	&[f]_{N}=\underset{|k|=N}{\max}\left\|D^k f\right\|_{0},&&
	[f]_{N+\alpha}=\underset{|k|=N}{\max}\underset{t}{\sup}\underset{x\neq y}{\sup}\frac{|D^k f(t,x)-D^k f(t,x)|}{|x-y|^\alpha},
\end{aligned}
$$
where $D^k$ are spatial derivatives. Then we can denote the H\"{o}lder norms as
$$
\left\|f\right\|_{N}=\sum_{j=0}^{m}[f]_{j}, \qquad\left\|f\right\|_{N+\alpha}=\left\|f\right\|_{N}+[f]_{N+\alpha}.
$$
\section{Conservation law form of nonlinear equations}\label{Conservation law form of nonlinear equation}
Obviously, when the derivative of $u$ is small enough, the equations \eqref{system} can be transformed into quasilinear hyperbolic systems. In this section, we study the structure of the quasilinear wave equations \eqref{system} in 2D and 3D. We denote
$
\Omega_\varepsilon = \{u \in C^1 \mid \|\nabla u\|_0 + \|\partial_t u\|_0 \leqslant \varepsilon\},
$
and we will discuss the solution $u$ whose derivatives $\|\nabla u\|_0$ and $\|\partial_t u\|_0$ are small enough, i.e., $u \in \Omega_\varepsilon$ for sufficiently small $\varepsilon$.

We first calculate the 2-dimensional case. Let
$
U = (\partial_1 u_1, \partial_2 u_1, \partial_1 u_2, \partial_2 u_2, \partial_t u_1, \partial_t u_2)^{\top},
$
then the equations \eqref{system} can be written as:

\begin{align}
	\frac{\partial U}{\partial t}+\sum_{j=1}^2A_j(U)\frac{\partial U}{\partial x_j}=0, \label{system 2D}
\end{align}
where
\begin{align*}
	A_j(U)&=\left(\begin{matrix}
		0&E_j\\
		B_j&0\\
	\end{matrix}\right), \quad 1\leqslant i\leqslant 2,
\end{align*}
and
\begin{align*}
	E_1=\left(\begin{matrix}
	-1&0\\
	0&0\\
	0&-1\\
	0&0
\end{matrix}\right), \quad
	E_2=\left(\begin{matrix}
	0&0\\
	-1&0\\
	0&0\\
	0&-1
\end{matrix}\right),
\end{align*}
\begin{align*}
B_1=\left(\begin{matrix}
-\lambda-2\mu&2\partial_2 u_1&0&-\lambda-\mu+2\partial_2u_2\\
-\partial_2 u_1&-\partial_1u_1&-\mu-\partial_2 u_2&-\partial_1u_2
\end{matrix}\right), \quad
B_2=\left(\begin{matrix}
-\partial_2u_1&-\mu-\partial_1 u_1&-\partial_2u_2&-\partial_1 u_2\\
-\lambda-\mu+2\partial_1u_1&0&2\partial_1 u_2&-\lambda-2\mu
\end{matrix}\right).
\end{align*}
For $\xi=(\xi_1,\xi_2)\in \mathbb{S}^{1},$ denote
\begin{align*}
	A(U, \xi)=\sum_{j=1}^2A_{j}(U)\xi_j
	=\left(\begin{matrix}
		0&\sum_{j=1}^2\xi_jE_j\\
		\sum_{j=1}^2\xi_jB_j&0
	\end{matrix}\right).
\end{align*}
Next, we calculate the corresponding eigenvalues $\{\tilde{\lambda}_i\}_{i=1}^6$ and eigenvectors $\{r_i\}_{i=1}^6$. It is easy to get
\begin{align*}
	\tilde{\lambda}_i\Id-A(U, \xi)
	&:=\left(\begin{matrix}
		\tilde{\lambda}_i\Id_4&-\sum_{j=1}^2\xi_jE_j\\
		-\sum_{j=1}^2\xi_jB_j&\tilde{\lambda}_i\Id_2
	\end{matrix}\right),
\end{align*}
and then
\begin{align*}
	|\tilde{\lambda}_i\Id_6-A(U, \xi)|
	&=\tilde{\lambda}_i^2\left|\tilde{\lambda}_i^2\Id_2-C\right|\\
	&=\tilde{\lambda}_i^2\left(\tilde{\lambda}_i^2-\frac{c_{11}+c_{22}}{2}+\frac{1}{2}\sqrt{(c_{11}-c_{22})^2+4c_{12}c_{21}}\right)\left(\tilde{\lambda}_i^2-\frac{c_{11}+c_{22}}{2}-\frac{1}{2}\sqrt{(c_{11}-c_{22})^2+4c_{12}c_{21}}\right),
\end{align*}
where 
\begin{align*}
	C:=\sum_{i,j=1}^2\xi_i\xi_jB_iE_j&=\left(c_{ij}\right)_{i,j=1}^2=\left(\begin{matrix}
		(\lambda+2\mu)\xi_1^2-\partial_2 u_1\xi_1\xi_2+(\mu+\partial_1 u_1)\xi_2^2& (\lambda+\mu-\partial_2u_2)\xi_1\xi_2+\partial_1 u_2\xi_2^2\\
		\partial_2u_1\xi_1^2+(\lambda+\mu-\partial_1 u_1)\xi_1\xi_2&(\mu+\partial_2u_2)\xi_1^2-\partial_1 u_2\xi_1\xi_2+(\lambda+2\mu)\xi_2^2
	\end{matrix}\right).
\end{align*}
So 6 real eigenvalues of $A(U, \xi)$ can be represented as 
\begin{align*}
	\tilde{\lambda}_{1,2}=0, \quad\tilde{\lambda}_{3,4}=\pm\sqrt{\frac{c_{11}+c_{22}-\sqrt{(c_{11}-c_{22})^2+4c_{12}c_{21}}}{2}}, \quad\tilde{\lambda}_{5,6}=\pm\sqrt{\frac{c_{11}+c_{22}+\sqrt{(c_{11}-c_{22})^2+4c_{12}c_{21}}}{2}}.
\end{align*}
where $\tilde{\lambda}_{3,4}$ and $\tilde{\lambda}_{5,6}$ are the square root of the eigenvalue of $C$. It is obvious to get that, the eigenvectors corresponding to the eigenvalue 
$0$ are linearly independent, and
\begin{align*}
	\nabla\tilde{\lambda}_{i}(U, \xi)\cdot r_i(U, \xi)=0, \quad i=1,2, \ \forall u\in \Omega_\varepsilon, \ \forall\xi\in \mathbb{S}^1,
\end{align*}
For $3\leqslant i\leqslant 6,$ the corresponding right eigenvectors can be chosen as
\begin{align*}
	r_i(U, \xi)&={\left(-
		\frac{c_{12}\xi_1}{\tilde{\lambda}_i(\tilde{\lambda}_i^2-c_{11})},
		-\frac{c_{12}\xi_2}{\tilde{\lambda}_i(\tilde{\lambda}_i^2-c_{11})},
		-\frac{\xi_1}{\tilde{\lambda}_i},
		-\frac{\xi_2}{\tilde{\lambda}_i},
		\frac{c_{12}}{\tilde{\lambda}_i^2-c_{11}},
		1
		\right)^{\large {\top}}},
\end{align*}
and then
\begin{align*}
	\nabla\tilde{\lambda}_{3,4}&=\frac{1}{2\tilde{\lambda}_{3,4}}\left(-\frac{1}{2}(\nabla c_{11}+\nabla c_{22})-\frac{1}{4\sqrt{(c_{11}-c_{22})^2+4c_{12}c_{21}}}(2(c_{11}-c_{22})(\nabla c_{11}-\nabla c_{22})+4c_{21}\nabla c_{12}+4c_{12}\nabla c_{21})\right)\\
	&=\frac{1}{2\tilde{\lambda}_{3,4}}\left(\frac{1}{2}\left(\begin{matrix}
		-\xi_2^2\\
		\xi_1\xi_2\\
		\xi_1\xi_2\\
		-\xi_1^2\\
		0\\
		0
	\end{matrix}\right)-\frac{1}{4\sqrt{(c_{11}-c_{22})^2+4c_{12}c_{21}}}\left(\begin{matrix}
		2(c_{11}-c_{22})\xi_2^2-4c_{12}\xi_1\xi_2\\
		-2(c_{11}-c_{22})\xi_1\xi_2+4c_{12}\xi_1^2\\
		2(c_{11}-c_{22})\xi_1\xi_2+4c_{21}\xi_2^2\\
		-2(c_{11}-c_{22})\xi_1^2-4c_{21}\xi_1\xi_2\\
		0\\
		0
	\end{matrix}\right)\right),
\end{align*}
\begin{align*}
	\nabla\tilde{\lambda}_{5,6}&=\frac{1}{2\tilde{\lambda}_{5,6}}\left(-\frac{1}{2}(\nabla c_{11}+\nabla c_{22})+\frac{1}{4\sqrt{(c_{11}-c_{22})^2+4c_{12}c_{21}}}(2(c_{11}-c_{22})(\nabla c_{11}-\nabla c_{22})+4c_{21}\nabla c_{12}+4c_{12}\nabla c_{21})\right)\\
	&=\frac{1}{2\tilde{\lambda}_{5,6}}\left(\frac{1}{2}\left(\begin{matrix}
		-\xi_2^2\\
		\xi_1\xi_2\\
		\xi_1\xi_2\\
		-\xi_1^2\\
		0\\
		0
	\end{matrix}\right)+\frac{1}{4\sqrt{(c_{11}-c_{22})^2+4c_{12}c_{21}}}\left(\begin{matrix}
		2(c_{11}-c_{22})\xi_2^2-4c_{12}\xi_1\xi_2\\
		-2(c_{11}-c_{22})\xi_1\xi_2+4c_{12}\xi_1^2\\
		2(c_{11}-c_{22})\xi_1\xi_2+4c_{21}\xi_2^2\\
		-2(c_{11}-c_{22})\xi_1^2-4c_{21}\xi_1\xi_2\\
		0\\
		0
	\end{matrix}\right)\right),
\end{align*}
where
\begin{align*}
	\nabla c_{11}&={\left(
		\xi_2^2,-\xi_1\xi_2,0,0,0,0
		\right)^{\large {\top}}}, \quad\nabla c_{22}={\left(
		0,0,-\xi_1\xi_2, \xi_1^2,0,0
		\right)^{\large {\top}}}, \\
	\nabla c_{21}&={\left(
		-\xi_1\xi_2, \xi_1^2,0,0,0,0
		\right)^{\large {\top}}}, \quad\nabla c_{12}={\left(
		0,0, \xi_2^2,-\xi_1\xi_2,0,0
		\right)^{\large {\top}}}.
\end{align*}
Then, we could obtain
\begin{align*}
	\nabla\tilde{\lambda}_{i}(U, \xi)\cdot r_i(U, \xi)=0, \quad 3\leqslant i\leqslant6, \ \forall u\in \Omega_\varepsilon, \ \forall\xi\in \mathbb{S}^1.
\end{align*}
To sum up, we have
\begin{align*}
	\nabla\tilde{\lambda}_{i}(U, \xi)\cdot r_i(U, \xi)=0, \quad 1\leqslant i\leqslant6, \ \forall u\in \Omega_\varepsilon, \ \forall\xi\in \mathbb{S}^1.
\end{align*}
We conclude that all characteristic families of the system \eqref{system 2D} are linearly degenerate in the sense of Majda \cite{Majda84}.\par 
Next, we consider about the case for three dimensions. We use similar symbols without causing confusion. Similarly, we have
\begin{align}
\frac{\partial U}{\partial t}+\sum_{j=1}^3A_j(U)\frac{\partial U}{\partial x_j}=0, \label{system 3D}
\end{align}
where $U=(\partial_1u_1, \partial_2u_1, \partial_3u_1, \partial_1u_2, \partial_2u_2, \partial_3u_2, \partial_1u_3, \partial_2u_3, \partial_3u_3, \partial_tu_1, \partial_tu_2, \partial_tu_3)^{\top}$ and
\begin{align*}
A_j(U)&=\left(\begin{matrix}
	0&E_j\\
	B_j&0\\
\end{matrix}\right), \quad 1\leqslant i\leqslant 3,
\end{align*}
and
\begin{align*}
E_1=\left(\begin{matrix}
	-1&0&0\\
	0&0&0\\
	0&0&0\\
	0&-1&0\\
	0&0&0\\
	0&0&0\\
	0&0&-1\\
	0&0&0\\
	0&0&0
\end{matrix}\right), \quad
E_2=\left(\begin{matrix}
	0&0&0\\
	-1&0&0\\
	0&0&0\\
	0&0&0\\
	0&-1&0\\
	0&0&0\\
	0&0&0\\
	0&0&-1\\
	0&0&0
\end{matrix}\right), \quad
E_3=\left(\begin{matrix}
	0&0&0\\
	0&0&0\\
	-1&0&0\\
	0&0&0\\
	0&0&0\\
	0&-1&0\\
	0&0&0\\
	0&0&0\\
	0&0&-1
\end{matrix}\right),
\end{align*}
\begin{align*}
B_1&=\left(\begin{matrix}
	-\lambda-2\mu&2\partial_2u_1&2\partial_3u_1&0&-\lambda-\mu+2\partial_2u_2&2\partial_3u_2&0&2\partial_2u_3&-\lambda-\mu+2\partial_3u_3\\
	-\partial_2 u_1&-\partial_1u_1&0&	-\partial_2 u_2&-\mu-\partial_1u_2&0&-\partial_2 u_3&-\partial_1u_2&0\\
	-\partial_3 u_1&0&-\partial_1u_1&-\partial_3 u_2&-\mu&-\partial_1u_2&-\partial_3 u_3&0&-\partial_1u_3\\
\end{matrix}\right), \\
B_2&=\left(\begin{matrix}
	-\partial_2 u_1&-\mu-\partial_1 u_1&0&-\partial_2 u_2&-\partial_1 u_2&0&-\partial_2 u_3&-\partial_1 u_3&0\\
	-\lambda-\mu+2\partial_1u_1&0&2\partial_3u_1&2\partial_1u_2&-\lambda-2\mu&2\partial_3u_2&2\partial_1u_3&0&-\lambda-\mu+2\partial_3u_3\\
	0&-\partial_3 u_1&-\partial_2 u_1&0&-\partial_3 u_2&-\partial_2 u_1&0&-\mu-\partial_3 u_3&-\partial_2 u_1\\
\end{matrix}\right), \\
B_3&=\left(\begin{matrix}
	-\partial_3 u_1&0&-\mu-\partial_1 u_1&-\partial_3 u_2&0&-\partial_1 u_2&-\partial_3 u_3&0&-\partial_1 u_3\\
	0&-\partial_3u_1&	-\partial_2 u_1&0&-\partial_3u_2&	-\mu-\partial_2 u_2&0&-\partial_3u_3&-\partial_2 u_3\\
	-\lambda-\mu+2\partial_1u_1&2\partial_2u_1&0&2\partial_1u_2&-\lambda-\mu+2\partial_2u_2&0&2\partial_1u_3&2\partial_2u_3&-\lambda-2\mu
\end{matrix}\right).
\end{align*}
For $\xi=(\xi_1,\xi_2,\xi_3)\in \mathbb{S}^{2}$, denote
\begin{align*}
A(U, \xi)=\sum_{j=1}^3A_{j}(U)\xi_j
=\left(\begin{matrix}
	0&\sum_{j=1}^3\xi_jE_j\\
	\sum_{j=1}^3\xi_jB_j&0
\end{matrix}\right).
\end{align*}
Next, we calculate the corresponding eigenvalues $\{\tilde{\lambda}_i\}_{i=1}^{12}$ and eigenvectors $\{r_i\}_{i=1}^{12}$. It is easy to get
\begin{align*}
\tilde{\lambda}_i\Id-A(U, \xi)
&:=\left(\begin{matrix}
	\tilde{\lambda}_i\Id_9&-\sum_{j=1}^3\xi_jE_j\\
	-\sum_{j=1}^3\xi_jB_j&\tilde{\lambda}_i\Id_3
\end{matrix}\right),
\end{align*}
and then
\begin{align*}
|\tilde{\lambda}_i\Id_{12}-A(U, \xi)|
&=\tilde{\lambda}_i^6\left|\tilde{\lambda}_i^6\Id_2-C\right|,
\end{align*}
where 
\begin{align*}
C=\sum_{i,j=1}^3\xi_i\xi_jB_iE_j&=\left(c_{ij}\right)_{i,j=1}^3.
\end{align*}
Then we know 12 real eigenvalues of $A(U, \xi)$ satisfy that 
$
\tilde{\lambda}_{1,2,3,4,5,6}=0,
$
and $\tilde{\lambda}_{7,8}, \tilde{\lambda}_{9,10},$ and $\tilde{\lambda}_{11,12}$ are the the square root of the corresponding eigenvalue of $C$.
It is obvious to get that, for the eigenvalue $0,$ there are six linearly independent eigenvectors, and
\begin{align*}
\nabla \tilde{\lambda}_{i}\cdot r_{i}=0, \quad 1\leqslant i\leqslant 6, \ \forall u\in \Omega_\varepsilon, \ \forall\xi\in \mathbb{S}^2.
\end{align*}
For $7\leqslant i\leqslant 12,$ by doing the similar calculation as before, we can get
\begin{align*}
\nabla\tilde{\lambda}_{i}(U, \xi)\cdot r_i(U, \xi)=0, \quad 7\leqslant i\leqslant12, \ \forall u\in \Omega_\varepsilon, \ \forall\xi\in \mathbb{S}^2.
\end{align*}
To sum up, we have
\begin{align*}
	\nabla\tilde{\lambda}_{i}(U, \xi)\cdot r_i(U, \xi)=0, \quad 1\leqslant i\leqslant12, \ \forall u\in \Omega_\varepsilon, \ \forall\xi\in \mathbb{S}^2.
\end{align*}
We conclude that all characteristic families of the system \eqref{system 3D} are linearly degenerate in the sense of Majda \cite{Majda84}.
\section{Inverse divergence operator}\label{Inverse divergence operator}
In this part, we introduce the inverse divergence operators which is originally defined in \cite{DS13}.
\begin{df}[Leray projection]
	Let $\upsilon\in C^\infty(\T^3;\R^3)$ be a smooth vector field. Let 
	\begin{equation}
		\cQ\upsilon:=\nabla\psi+\fint_{\T^3}\upsilon\label{Qv},
	\end{equation}
	where $\psi\in C^\infty(\T^3)$ is the solution of
	$\Delta\psi=\Div\ \upsilon$,
	with $\fint_{\T^3}\psi=0$. Furthermore, let $\mathcal{P}=\cI-\cQ$ be the Leray projection onto divergence-free fields with zero average.
\end{df}
\begin{df}[Inverse divergence]\label{def of R}
	Let $\upsilon\in C^\infty(\T^3;\R^3)$ be a smooth vector field. We can define $\cR\upsilon$ to be the matrix-valued periodic function
	\begin{equation}
		\cR\upsilon:=\frac{1}{4}(\nabla\mathcal{P}u+(\nabla\mathcal{P}u)^{\top})+\frac{3}{4}(\nabla u+(\nabla u)^{\top})-\frac{1}{2}(\Div u)\Id, \label{def of R operator}
	\end{equation}
	where $u\in C^\infty(\T^3;\R^3)$ is the solution of
	$\Delta u=\upsilon-\fint_{\T^3}\upsilon$,
	with $\fint_{\T^3}u=0$.\label{inverse operator 1}
\end{df}
\begin{pp}
	For any $\alpha\in(0,1)$ and any $N\in\N,$ there exists a constant $C(N,\alpha)$ with the following properties.
	For the operator $\cR$ defined above, we have for $\upsilon\in C^{N+\alpha}(\T^3;\R^3)$ and $A\in C^{N+\alpha}(\T^3;\R^{3\times 3})$ 
	\begin{equation}
		\begin{aligned}
			&\left\|\cR\upsilon\right\|_{N+1+\alpha}\leqslant C(N, \alpha)\left\|\upsilon\right\|_{N+\alpha}, \\
			&\left\|\cR(\Div A)\right\|_{N+\alpha}\leqslant C(N, \alpha)\left\|A\right\|_{N+\alpha}.
		\end{aligned}\label{eest on R}
	\end{equation}
\end{pp}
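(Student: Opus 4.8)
This proposition is the classical mapping estimate for the inverse divergence operator of \cite{DS13}, and the plan is to reduce both bounds to two standard analytic inputs on $\T^3$: (i) the elliptic Schauder estimate $\|\Delta^{-1}f\|_{N+2+\alpha}\lesssim_{N,\alpha}\|f\|_{N+\alpha}$, valid for every zero-mean $f$, every $N\in\N$ and every $\alpha\in(0,1)$; and (ii) the boundedness on $C^{N+\alpha}(\T^3)$, for the same range of $\alpha$, of any Fourier-multiplier operator whose symbol is smooth away from the origin, bounded, and homogeneous of degree zero — that is, of any Calder\'on--Zygmund operator. Applying (ii) to the symbols $-k\otimes k/|k|^2$ shows in particular that $\partial_j\partial_k\Delta^{-1}$, and hence the projections $\cQ$ and $\cP=\Id-\cQ$ (which act by $\cP v=v-\nabla\Delta^{-1}\Div v-\fint_{\T^3}v$), are bounded on every such $C^{N+\alpha}$. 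These are exactly the inputs that force $0<\alpha<1$, since the operators in question are bounded neither on $C^0$ nor on $C^1$.

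For the first estimate, let $u\in C^\infty(\T^3;\R^3)$ be the zero-mean solution of $\Delta u=\upsilon-\fint_{\T^3}\upsilon$, so that by (i) $\|u\|_{N+2+\alpha}\lesssim_{N,\alpha}\|\upsilon\|_{N+\alpha}$. Reading off the three terms in \eqref{def of R operator}: the terms $\tfrac34(\nabla u+(\nabla u)^\top)$ and $-\tfrac12(\Div u)\Id$ are bounded directly by $\|\nabla u\|_{N+1+\alpha}\le\|u\|_{N+2+\alpha}$; and since $\fint_{\T^3}u=0$ we have $\cP u=u-\nabla\Delta^{-1}\Div u$, hence $\nabla\cP u=\nabla u-\nabla^2\Delta^{-1}\Div u$, which by (ii) obeys $\|\nabla\cP u\|_{N+1+\alpha}\lesssim\|\Div u\|_{N+1+\alpha}+\|\nabla u\|_{N+1+\alpha}\lesssim\|u\|_{N+2+\alpha}$. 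Summing yields $\|\cR\upsilon\|_{N+1+\alpha}\lesssim_{N,\alpha}\|\upsilon\|_{N+\alpha}$.

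For the second estimate, set $\upsilon=\Div A$ with $A\in C^{N+\alpha}(\T^3;\R^{3\times3})$. Since $\fint_{\T^3}\Div A=0$, the potential is $u=\Delta^{-1}\Div A$, so in Fourier variables every entry of $\cR(\Div A)$ equals $m(k)\,\hat A(k)$ for $k\ne0$, where $m$ is assembled from the bounded degree-zero symbols $k\otimes k/|k|^2$ and hence is itself a matrix-valued symbol that is smooth off the origin and homogeneous of degree zero; the $k=0$ mode of $\cR(\Div A)$ vanishes because $\cR$ produces only gradients of zero-mean quantities. Thus $\cR\circ\Div$ is a Calder\'on--Zygmund operator, and (ii) gives $\|\cR(\Div A)\|_{N+\alpha}\lesssim_{N,\alpha}\|A\|_{N+\alpha}$. (When $N\ge1$ one may instead simply invoke the first estimate with $N$ replaced by $N-1$, applied to $\upsilon=\Div A\in C^{N-1+\alpha}$; the multiplier argument is needed only to cover the endpoint $N=0$.)

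The individual bounds are entirely routine and the result is classical; if there is any subtlety it is only that one must work in the open H\"older range $0<\alpha<1$ — the relevant Riesz-type operators fail to be bounded on $C^0$ or on $C^1$ — and must keep track of the zero Fourier mode on the torus, which is the step I would flag as the main point of care. In passing, $\cR\upsilon$ is symmetric and satisfies $\Div\cR\upsilon=\upsilon-\fint_{\T^3}\upsilon$ by construction, though neither identity is needed for the estimates asserted here.
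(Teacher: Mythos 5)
Your proposal is correct and takes essentially the same approach as the paper: the paper's proof consists of citing the two Schauder estimates, for $\Delta\phi=f$ (gain of two derivatives) and for $\Delta\psi=\Div F$ (gain of one derivative with only $\|F\|_{N+\alpha}$ on the right), and your Calder\'on--Zygmund/degree-zero multiplier input is just an equivalent packaging of the second, divergence-form estimate, with the Leray projection term and the zero Fourier mode spelled out explicitly. No gaps.
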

\begin{proof}
	By the standard Schauder estimates,  for any $\phi, \psi:\T^3\rightarrow\R$ with
	\begin{align*}
		\left\{\begin{array} { l } 
			{ \Delta \phi = f , } \\
			{ \fint_{\T^3}\phi=0, }
		\end{array} \quad \text { and } \quad \left\{\begin{array}{l}
			\Delta \psi=\operatorname{div} F, \\
			\fint_{\T^3}\psi=0,
		\end{array}\right.\right.
	\end{align*}
	we have $
	\left\|\phi\right\|_{N+2+\alpha}\leqslant C(N, \alpha)\left\|f\right\|_{N+\alpha}, \left\|\psi\right\|_{N+1+\alpha}\leqslant C(N, \alpha)\left\|F\right\|_{N+\alpha},$ which yields \eqref{eest on R}.
\end{proof} 
\section{Some technical lemmas}
In this section, we introduce some lemmas given in \cite{BDSV19,DK22,GK22}. The proof for the following two lemmas can be found in \cite[Appendix]{BDSV19}.
\begin{lm}\cite[Proposition A.1]{BDSV19}
	Suppose $F:\Omega\rightarrow\R$ and $\Psi:\R^n\rightarrow\Omega$ are smooth functions for some $\Omega\subset\R^m$. Then, for each $N\in\Z_+,$ we have 
	\begin{equation}
		\begin{aligned}
			&\|\nabla^N(F\circ\Psi)\|_0\lesssim\|\nabla F\|_0\|\nabla\Psi\|_{N-1}+\|\nabla F\|_{N-1}\|\Psi\|_0^{N-1}\|\Psi\|_N, \\
			&\|\nabla^N(F\circ\Psi)\|_0\lesssim\|\nabla F\|_0\|\nabla\Psi\|_{N-1}+\|\nabla F\|_{N-1}\|\nabla\Psi\|_0^{N},
		\end{aligned}\label{est on Holider norm}
	\end{equation}
	where the implicit constants in the inequalities depends only on $n, m,$ and $N$.\label{$Holider norm of composition}
\end{lm}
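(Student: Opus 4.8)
The plan is to reduce the estimate to the multivariate Fa\`a di Bruno formula and then control the resulting products by Gagliardo--Nirenberg interpolation of the sup-norm, finishing with Young's inequality. First I would fix a spatial multi-index $\kappa$ with $|\kappa|=N$ and expand
\begin{equation*}
	D^\kappa(F\circ\Psi)=\sum_{1\leqslant |\beta|\leqslant N}(D^\beta F)\circ\Psi\cdot B_{\kappa,\beta}\big(D\Psi,\dots,D^{N-|\beta|+1}\Psi\big),
\end{equation*}
where each $B_{\kappa,\beta}$ is a finite linear combination (with combinatorial coefficients that are irrelevant here) of monomials $\prod_{j\geqslant1}(D^j\Psi)^{m_j}$ with $m_j\in\N$, $\sum_j m_j=|\beta|$ and $\sum_j jm_j=N$. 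Since $N\geqslant1$ forces $|\beta|\geqslant1$, writing $D^\beta F=D^{\beta'}(\nabla F)$ with $|\beta'|=|\beta|-1\leqslant N-1$ gives $\|(D^\beta F)\circ\Psi\|_0\leqslant\|\nabla F\|_{|\beta|-1}$. It therefore suffices to bound, for every admissible pair $\big(\beta,(m_j)\big)$, the quantity $\|\nabla F\|_{|\beta|-1}\prod_{j\geqslant1}\|\nabla^j\Psi\|_0^{m_j}$ by the right-hand side of the claimed inequality, and then to sum over the finitely many such pairs (their number depending only on $N$) and over the finitely many components $D^\kappa$.

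For the second inequality I would use $\|\nabla^j\Psi\|_0\lesssim\|\nabla\Psi\|_0^{1-\frac{j-1}{N-1}}\|\nabla\Psi\|_{N-1}^{\frac{j-1}{N-1}}$, valid for $1\leqslant j\leqslant N$. The constraints $\sum_j m_j=|\beta|$, $\sum_j jm_j=N$ make the exponents telescope, yielding $\prod_j\|\nabla^j\Psi\|_0^{m_j}\lesssim\|\nabla\Psi\|_0^{\frac{N(|\beta|-1)}{N-1}}\|\nabla\Psi\|_{N-1}^{\frac{N-|\beta|}{N-1}}$, and combining this with the interpolation $\|\nabla F\|_{|\beta|-1}\lesssim\|\nabla F\|_0^{\frac{N-|\beta|}{N-1}}\|\nabla F\|_{N-1}^{\frac{|\beta|-1}{N-1}}$ rewrites the model term as $\big(\|\nabla F\|_0\|\nabla\Psi\|_{N-1}\big)^{1-s}\big(\|\nabla F\|_{N-1}\|\nabla\Psi\|_0^{N}\big)^{s}$ with $s=\tfrac{|\beta|-1}{N-1}\in[0,1]$; Young's inequality then bounds it by the sum of the two displayed terms. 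The endpoints $|\beta|=1$ (the monomial is exactly $D^N\Psi$) and $|\beta|=N$ (the monomial is exactly $(D\Psi)^N$) reproduce precisely $\|\nabla F\|_0\|\nabla\Psi\|_{N-1}$ and $\|\nabla F\|_{N-1}\|\nabla\Psi\|_0^N$. The first inequality is obtained by the same scheme, interpolating instead with $\|\nabla^j\Psi\|_0\lesssim\|\Psi\|_0^{1-j/N}\|\Psi\|_N^{j/N}$, which telescopes to $\prod_j\|\nabla^j\Psi\|_0^{m_j}\lesssim\|\Psi\|_0^{|\beta|-1}\|\Psi\|_N$; one isolates the endpoint $|\beta|=1$ by hand, where $\prod=\|\nabla^N\Psi\|_0=\|\nabla\Psi\|_{N-1}$ gives the first term directly, and treats the remaining terms ($|\beta|\geqslant2$) by an elementary manipulation of the powers of $\|\Psi\|_0$ and $\|\Psi\|_N$ together with $\|\nabla F\|_{|\beta|-1}\leqslant\|\nabla F\|_{N-1}$.

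I expect the only genuinely delicate point to be the exponent bookkeeping in the middle step: one must verify that the interpolation exponents of the $\Psi$-factor and of the $\nabla F$-factor are H\"older-conjugate, so that Young's inequality collapses every intermediate model term ($1<|\beta|<N$) onto the two displayed terms; this hinges precisely on the two identities $\sum_j m_j=|\beta|$ and $\sum_j jm_j=N$ produced by the Fa\`a di Bruno expansion. Everything else — the combinatorial formula itself, the Gagliardo--Nirenberg inequalities for the sup-norm, and the finiteness of the sums — is classical, so in the write-up I would invoke these facts rather than reprove them.
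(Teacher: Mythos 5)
The paper itself contains no proof of this lemma: it is quoted from \cite[Proposition A.1]{BDSV19} and used as a black box, and your route --- Fa\`a di Bruno expansion, sup-norm (Landau--Kolmogorov) interpolation, then Young --- is exactly the standard argument behind that citation. So the only question is whether your sketch closes, and for the second inequality it does: the bookkeeping $\sum_j m_j\bigl(1-\tfrac{j-1}{N-1}\bigr)=\tfrac{N(|\beta|-1)}{N-1}$, $\sum_j m_j\tfrac{j-1}{N-1}=\tfrac{N-|\beta|}{N-1}$ is correct, the two exponents are conjugate, and Young collapses the intermediate terms.

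Your treatment of the first inequality, however, has a genuine gap at the intermediate terms $2\leqslant|\beta|\leqslant N-1$. There you arrive at $\|\nabla F\|_{|\beta|-1}\|\Psi\|_0^{|\beta|-1}\|\Psi\|_N$ and propose to finish using only $\|\nabla F\|_{|\beta|-1}\leqslant\|\nabla F\|_{N-1}$ plus ``an elementary manipulation of the powers of $\|\Psi\|_0$ and $\|\Psi\|_N$''. This cannot work: when $\|\Psi\|_0<1$ you cannot raise the exponent of $\|\Psi\|_0$ from $|\beta|-1$ to $N-1$, and in fact the quantity $\|\nabla F\|_{N-1}\|\Psi\|_0^{|\beta|-1}\|\Psi\|_N$ can strictly dominate the whole right-hand side. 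Concretely, with $n=m=1$, $N=3$, $|\beta|=2$, take $F'(y)=\delta\sin(y/\sqrt{\delta})$ and $\Psi(x)=\sqrt{\delta}\sin(x/\eta)$ with $\delta,\eta$ small: then $\|\nabla F\|_{2}\|\Psi\|_0\|\Psi\|_3\sim\delta/\eta^3$, while $\|\nabla F\|_0\|\nabla\Psi\|_{2}+\|\nabla F\|_{2}\|\Psi\|_0^{2}\|\Psi\|_3\sim\delta^{3/2}/\eta^3$; so once you discard the distinction between $\|\nabla F\|_{|\beta|-1}$ and $\|\nabla F\|_{N-1}$ the chain of inequalities is already false (the lemma itself is not contradicted, since the actual chain-rule term is only $\|F''\|_0\|\Psi'\|_0\|\Psi''\|_0\sim\delta^{3/2}/\eta^3$). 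The repair is exactly the scheme you used for the second inequality: keep $\|\nabla F\|_{|\beta|-1}$, interpolate $\|\nabla F\|_{|\beta|-1}\lesssim\|\nabla F\|_0^{\frac{N-|\beta|}{N-1}}\|\nabla F\|_{N-1}^{\frac{|\beta|-1}{N-1}}$, use $\|\nabla^j\Psi\|_0\lesssim\|\Psi\|_0^{1-j/N}[\Psi]_N^{j/N}$ with the \emph{seminorm} $[\Psi]_N\leqslant\min\{\|\nabla\Psi\|_{N-1},\|\Psi\|_N\}$ (the full norm $\|\Psi\|_N$ is not controlled by $\|\nabla\Psi\|_{N-1}$, e.g.\ for constants), note $(N-1)\cdot\tfrac{|\beta|-1}{N-1}=|\beta|-1$ so the term becomes $\bigl(\|\nabla F\|_0[\Psi]_N\bigr)^{1-s}\bigl(\|\nabla F\|_{N-1}\|\Psi\|_0^{N-1}[\Psi]_N\bigr)^{s}$ with $s=\tfrac{|\beta|-1}{N-1}$, and apply Young. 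Finally, a caveat affecting both halves: the multiplicative interpolation for $\nabla F$ is taken on $\Omega$, and for an arbitrary $\Omega\subset\R^m$ it fails with a constant depending only on $m,N$ (e.g.\ $F(y)=|y|^2/2$ on a small ball); one should assume $\Omega$ convex or first extend $F$, which is harmless here since the paper applies the lemma with $\Omega=B_{r_0}(\Id)$.
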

\begin{lm}\cite[Proposition C.2]{BDSV19}.\label{est on int operator}
	Let $N\geqslant1$. Suppose that $a\in C^\infty(\T^3)$ and $\xi\in C^{\infty}(\T^3;\R^3)$ satisfies
	\begin{equation}
		\frac{1}{C}\leqslant|\nabla\xi|\leqslant C,
	\end{equation}
	for some constant $C>1$. Then, we have 
	\begin{equation}\label{est on Rae^ikxi}
		\Bigg|\int_{\T^3}a(x)e^{ik\cdot\xi}\rd x\Bigg|\lesssim_{C,N}\frac{\|a\|_N+\|a\|_0\|\nabla\xi\|_N}{|k|^N},
	\end{equation}
	and for the operator $\cR$ defined in Definition \ref{def of R operator}, we have
	\begin{equation}\label{est on R ae}
		\|\cR\left(a(x) e^{i k \cdot \xi}\right)\|_{\alpha} \lesssim_{C,N,\alpha} \frac{\|a\|_{0}}{|k|^{1-\alpha}}+\frac{\|a\|_{N+\alpha}+\|a\|_{0}\|\xi\|_{N+\alpha}}{|k|^{N-\alpha}}.
	\end{equation}
\end{lm}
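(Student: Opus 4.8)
Both inequalities are classical non‑stationary phase estimates, and the plan is to derive them from a single integration‑by‑parts device that exploits the absence of critical points of the real phase $\Phi:=k\cdot\xi$: from $\tfrac1C\le|\nabla\xi|\le C$ one gets $\nabla\Phi=(\nabla\xi)^{\top}k$ with $|\nabla\Phi|\sim_{C}|k|$, so $b:=\nabla\Phi/|\nabla\Phi|^{2}$ is smooth with $\|b\|_{0}\sim|k|^{-1}$ and $b\cdot\nabla\Phi\equiv1$, whence $b\cdot\nabla e^{i\Phi}=i\,e^{i\Phi}$. The identity I would build everything on is
\[
a\,e^{i\Phi}=\Div\!\Big(\tfrac{1}{i}\,(a\otimes b)\,e^{i\Phi}\Big)-(La)\,e^{i\Phi},\qquad La:=\tfrac{1}{i}\,\Div(a\otimes b),
\]
with $\Div$ acting on the second index and $a$ taken $\R^{3}$-valued (for the scalar $a$ of \eqref{est on Rae^ikxi} one reads $ab$ for $a\otimes b$); the point is that $L$ is first order, costing one derivative but gaining the factor $\|b\|_{0}\sim|k|^{-1}$.

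For \eqref{est on Rae^ikxi} I would integrate the identity over $\T^{3}$: the divergence term vanishes, so $\int_{\T^{3}}a\,e^{i\Phi}\rd x=-\int_{\T^{3}}(La)\,e^{i\Phi}\rd x$, and after $N$ iterations $\big|\int_{\T^{3}}a\,e^{i\Phi}\rd x\big|\le(2\pi)^{3}\|L^{N}a\|_{0}$. It then remains to check $\|L^{N}a\|_{0}\lesssim_{C,N}|k|^{-N}\big(\|a\|_{N}+\|a\|_{0}\|\nabla\xi\|_{N}\big)$, which follows by Leibniz: each application of $L$ either differentiates $a$, or differentiates the coefficient $b=b(\nabla\Phi)$ — whose derivatives are controlled by powers of $C$ and derivatives of $\nabla\xi$ with no loss in $|k|$, since $p\mapsto p/|p|^{2}$ is smooth on the annulus $\{|p|\sim|k|\}$ and $|\nabla\Phi|^{-1}\lesssim|k|^{-1}$ (here \eqref{est on Holider norm} is the relevant chain rule) — and every such step contributes an extra $|k|^{-1}$.

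For \eqref{est on R ae} I would keep the exponential and turn the identity into a parametrix for $\cR$: applying $\cR$ and iterating,
\[
\cR\big(a\,e^{i\Phi}\big)=\sum_{j=0}^{N-1}(-1)^{j}\,\cR\,\Div\!\Big(\tfrac{1}{i}\,\big((L^{j}a)\otimes b\big)\,e^{i\Phi}\Big)+(-1)^{N}\,\cR\big((L^{N}a)\,e^{i\Phi}\big).
\]
Each divergence term I would bound by $\|\cR(\Div A)\|_{\alpha}\lesssim\|A\|_{\alpha}$ from \eqref{eest on R}, and the last term by $\|\cR w\|_{\alpha}\le\|\cR w\|_{1+\alpha}\lesssim\|w\|_{\alpha}$, also from \eqref{eest on R}; so everything reduces to Hölder bounds on $\big((L^{j}a)\otimes b\big)e^{i\Phi}$ and $(L^{N}a)e^{i\Phi}$. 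For the $C^{0}$ norms one uses $\|L^{j}a\|_{0}\lesssim|k|^{-j}(\|a\|_{j}+\|a\|_{0}\|\nabla\xi\|_{j})$ and $\|b\|_{0}\sim|k|^{-1}$; for the $C^{\alpha}$ seminorms one expands by Leibniz and notes that the only factor with a genuinely unbounded Hölder seminorm is $e^{i\Phi}$, for which $[e^{i\Phi}]_{\alpha}\lesssim\|\nabla\Phi\|_{0}^{\alpha}\lesssim_{C}|k|^{\alpha}$. For $j=0$ that single term gives $\lesssim\|a\|_{0}\cdot|k|^{-1}\cdot|k|^{\alpha}=\|a\|_{0}|k|^{-(1-\alpha)}$, the first term on the right of \eqref{est on R ae}; every other contribution — the $j\ge1$ divergence terms, whose extra $L$'s push the gain to $|k|^{-(j+1)+\alpha}\le|k|^{-(N-\alpha)}$, and the order‑$N$ remainder — costs at most $N+\alpha$ derivatives of $a$ and $\xi$ and gains $|k|^{-(N-\alpha)}$, producing the second term.

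The routine but genuinely delicate point, and the one I would be most careful about, is the amplitude bookkeeping in the last paragraph: one has to verify that differentiating the nonlinear coefficient $b=\nabla\Phi/|\nabla\Phi|^{2}$ never costs a power of $|k|$ — this is exactly where the lower bound $|\nabla\xi|\ge1/C$ enters — and that across the whole telescoping sum the Hölder seminorm of $e^{i\Phi}$ is invoked exactly once per term, so that the loss $|k|^{\alpha}$ appears only in the two places asserted in \eqref{est on R ae} and is never compounded; the intermediate orders $j=1,\dots,N-1$ then fold harmlessly into the $N$‑th.
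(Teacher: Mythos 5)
The paper offers no proof of this lemma at all---it is quoted from \cite{BDSV19} (Proposition C.2)---so the relevant comparison is with the standard proof of that result, and your scheme is essentially that proof: repeated non-stationary-phase integration by parts along $b=\nabla\Phi/|\nabla\Phi|^{2}$ for \eqref{est on Rae^ikxi}, and the telescoped $\cR\Div$ expansion combined with the Schauder bounds \eqref{eest on R} for \eqref{est on R ae}. The architecture is sound, and your key observation---that the $|k|^{\alpha}$ loss from $[e^{i\Phi}]_{\alpha}$ may be paired only with $C^{0}$ norms of the amplitudes, once per term---is exactly the point that makes the two-term right-hand side attainable.

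Two items need repair, though neither changes the route. First, the matrix bound $|\nabla\xi|\geqslant 1/C$ does not by itself give $|\nabla\Phi|=|(\nabla\xi)^{\top}k|\gtrsim|k|$: a rank-deficient matrix can have norm $\geqslant 1/C$ and annihilate $k$. As in \cite{BDSV19}, the hypothesis has to be read as nondegeneracy of the phase, $|(\nabla\xi)^{\top}k|\geqslant C^{-1}|k|$, which is what your construction of $b$ actually requires and which holds trivially in every application in this paper, where $\nabla\xi_{A_I,f}=[I]f$ is a constant nonzero vector. Second, your assertion that the $j\geqslant1$ divergence terms gain $|k|^{-(j+1)+\alpha}\leqslant|k|^{-(N-\alpha)}$ is false for $1\leqslant j\leqslant N-2$: those terms carry the intermediate norms $\|a\|_{j}$, $\|a\|_{j+\alpha}$, $\|\nabla\xi\|_{j+\alpha}$ with only $|k|^{-(j+1-\alpha)}$, resp.\ $|k|^{-(j+1)}$, of decay, so they do not fall below the second term of \eqref{est on R ae} by exponent comparison alone. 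They are absorbed by interpolating the intermediate H\"older norms between $\|a\|_{0}$ and $\|a\|_{N+\alpha}$ (resp.\ between $\|\nabla\xi\|_{0}\leqslant C$ and $\|\xi\|_{N+\alpha}$) and applying Young's inequality; thanks to your ``one $[e^{i\Phi}]_{\alpha}$ per term'' bookkeeping the required exponent conditions reduce to trivial inequalities such as $\tfrac{N-1}{N+\alpha}\leqslant1$. The same interpolation device (again using $\|\nabla\xi\|_{0}\leqslant C$) is also needed to close the Leibniz claim $\|L^{N}a\|_{0}\lesssim|k|^{-N}(\|a\|_{N}+\|a\|_{0}\|\nabla\xi\|_{N})$, whose cross terms $\|a\|_{m}\|\nabla\xi\|_{N-m}$ are not of the stated form without it. With these routine insertions your proof is complete and coincides with the cited one.
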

\section{Estimates for nonlinear equations}
In this section, we introduce a lemma which gives the solutions of a system of polynomial equations and will be used in the construction of the perturbation. To maintain the completeness of the article, we will use the Newton iteration method to provide the proof.
\begin{lm}
	Consider the following equations
	\begin{equation}\label{polynomial equations}
		\left\{
		\begin{aligned}
			&A_1a_{1}^2+E_1a_{1}a_{2}+C_1a_1+B_2a_2=D_1, \\		&A_2a_{2}^2+E_2a_{1}a_{2}+C_2a_2+B_1a_1=D_2,
		\end{aligned}\right.
	\end{equation}
where $A_i,B_i,C_i,D_i$ and $E_i$ are constants. If there exist positive constants $\tilde{C},$ $c,$ and $\varepsilon$ such that 
\begin{align*}
	\sum_{i=1}^2\left(|A_i|+|B_i|+|D_i|+|E_i|+|C_i-c|\right)&\leqslant\tilde{C}\varepsilon,
\end{align*}
we could find $\varepsilon_0(\tilde{C},c)>0$ such that, for any $\varepsilon<\varepsilon_0,$ there exists an solution $(a_1,a_2)$ of equation \ref{polynomial equations} such that 
\begin{align*}
	|a_1|+|a_2|\leqslant C\varepsilon,
\end{align*}
 for some constant $C(\tilde{C},c)>0$.
\end{lm}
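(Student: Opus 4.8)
The plan is to solve the system \eqref{polynomial equations} by a fixed-point/Newton-iteration argument treating the quadratic terms and the small coefficients as perturbations of the dominant linear part. First I would observe that under the hypothesis $\sum_{i=1}^2(|A_i|+|B_i|+|D_i|+|E_i|+|C_i-c|)\leqslant\tilde C\varepsilon$, the linear part of the system is a small perturbation of the diagonal map $(a_1,a_2)\mapsto (ca_1,ca_2)$: indeed the left-hand side can be written as $c\,(a_1,a_2)^\top + L_\varepsilon (a_1,a_2)^\top + Q(a_1,a_2)$, where $L_\varepsilon$ is a matrix with entries bounded by $\tilde C\varepsilon$ (coming from $C_i-c$, $B_i$) and $Q$ collects the quadratic terms $A_ia_i^2$, $E_ia_1a_2$ whose coefficients are also $O(\tilde C\varepsilon)$. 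Since $c>0$ is fixed, for $\varepsilon$ small the matrix $c\,\Id + L_\varepsilon$ is invertible with inverse of norm $\leqslant 2/c$.

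Next I would set up the fixed-point map. Rewrite \eqref{polynomial equations} as $(a_1,a_2)^\top = (c\,\Id+L_\varepsilon)^{-1}\big( (D_1,D_2)^\top - Q(a_1,a_2)\big) =: \Phi(a_1,a_2)$, and work on the closed ball $\overline B = \{(a_1,a_2): |a_1|+|a_2|\leqslant C\varepsilon\}$ in $\R^2$ with $C=C(\tilde C,c)$ to be fixed. For $(a_1,a_2)\in\overline B$ one has $|Q(a_1,a_2)|\lesssim \tilde C\varepsilon\,(C\varepsilon)^2$ and $|(D_1,D_2)|\leqslant\tilde C\varepsilon$, so $|\Phi(a_1,a_2)|\leqslant \tfrac{2}{c}\big(\tilde C\varepsilon + C_0\tilde C C^2\varepsilon^3\big)$; choosing $C := \tfrac{4\tilde C}{c}$ and then $\varepsilon_0=\varepsilon_0(\tilde C,c)$ small enough that $\tfrac{2}{c}C_0\tilde C C^2\varepsilon_0^2\leqslant \tfrac{2\tilde C}{c}$ gives $\Phi(\overline B)\subset\overline B$. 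For contraction, $Q$ is quadratic so $|Q(a)-Q(a')|\lesssim \tilde C\varepsilon\,(C\varepsilon)\,|a-a'|$, hence $|\Phi(a)-\Phi(a')|\leqslant \tfrac{2}{c}C_1\tilde C C\varepsilon\,|a-a'|$, which is a contraction once $\varepsilon_0$ is further shrunk so that $\tfrac{2}{c}C_1\tilde C C\varepsilon_0\leqslant \tfrac12$. The Banach fixed-point theorem then yields a unique solution $(a_1,a_2)\in\overline B$, which by construction satisfies $|a_1|+|a_2|\leqslant C\varepsilon$ with $C=C(\tilde C,c)$, as claimed. Alternatively, the same conclusion follows by running the Newton iteration $a^{(0)}=0$, $a^{(k+1)}=\Phi(a^{(k)})$, which stays in $\overline B$ and converges geometrically by the contraction estimate; I would phrase the proof this way to match the statement's mention of Newton iteration.

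The only genuinely delicate point is bookkeeping the constants so that all the smallness requirements on $\varepsilon_0$ depend only on $\tilde C$ and $c$ (and not circularly on $C$): the resolution is to fix $C=4\tilde C/c$ first, purely in terms of $\tilde C$ and $c$, and only afterwards choose $\varepsilon_0=\varepsilon_0(\tilde C,c)$ small enough to validate self-mapping and contraction simultaneously. Everything else is routine: invertibility of $c\,\Id+L_\varepsilon$ via Neumann series, the quadratic bounds on $Q$, and the standard contraction-mapping conclusion. I do not expect any analytic obstacle beyond this constant-tracking, since the system is finite-dimensional and the nonlinearity is a fixed quadratic polynomial with uniformly small coefficients.
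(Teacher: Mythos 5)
Your argument is correct, but it takes a different route from the paper. You isolate the fixed linear operator $c\,\Id+L_\varepsilon$ (with $L_\varepsilon$ built from $C_i-c$ and $B_i$), invert it once by a Neumann series, and run a Banach fixed-point/Picard iteration for $\Phi(a)=(c\,\Id+L_\varepsilon)^{-1}(D-Q(a))$ on the ball $|a_1|+|a_2|\leqslant C\varepsilon$ with $C=4\tilde C/c$ fixed first; self-mapping and contraction then follow from the quadratic smallness of $Q$, and the constants $\varepsilon_0$ and $C$ depend only on $\tilde C$ and $c$, exactly as required. The paper instead runs a genuine Newton scheme: it starts from $a_{1,1}=D_1/C_1$, $a_{2,1}=D_2/C_2$, re-linearizes the system at each iterate (so the $2\times2$ matrix it inverts has entries such as $C_1+2A_1a_{1,n}+E_1a_{2,n}$ and must be checked invertible at every step), and shows the residuals decay like $\varepsilon^{2^{n-1}}$, summing the corrections to get the same bound. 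Your version is more elementary, avoids re-linearization, and gives uniqueness of the solution in the ball as a free byproduct; the paper's version buys quadratic convergence of the residuals, which is not needed for the statement. One small terminological caveat: the Picard iteration $a^{(k+1)}=\Phi(a^{(k)})$ you describe converges only geometrically and is not Newton's method, so the remark that this phrasing ``matches'' the Newton iteration mentioned in the paper's preamble is a mislabel, though it has no bearing on the validity of your proof.
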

\begin{proof}
If $\varepsilon<\min(\frac{c}{1000},\frac{c}{1000\tilde{C}}, \frac{c}{1000\tilde{C}^2}, \frac{c^2}{512\tilde{C}}, \frac{1}{2}),$ we have 
\begin{align*}
	\sum_{i=1}^2\left(|A_i|+|B_i|+|D_i|+|E_i|\right)&\leqslant \frac{c}{100}, \quad \frac{c}{2}\leqslant|C_1|\leqslant 2c,\quad \frac{c}{2}\leqslant|C_2|\leqslant 2c.
\end{align*}
Let $a_{1,1}=D_1/C_1$ and $a_{2,1}=D_2/C_2,$ we have 
	\begin{equation}
	\left\{
	\begin{aligned}
	&A_1a_{1,1}^2+E_1a_{1,1}a_{2,1}+C_1a_{1,1}+B_2a_{2,1}=D_1+\epsilon_{1,1}, \\		&A_2a_{2,1}^2+E_2a_{1,1}a_{2,1}+C_2a_{2,1}+B_1a_{1,1}=D_2+\epsilon_{2,1},
	\end{aligned}\right.
\end{equation}
where $\epsilon_{1,1}=(A_1D_1^2)/C_1^2+(E_1D_1D_2)/(C_1C_2)+(B_2D_2)/C_2$ and $\epsilon_{1,2}=(A_2D_2^2)/C_2^2+(E_2D_1D_2)/(C_1C_2)+(B_1D_1)/C_1$. So we have the following estimates
\begin{align*}
	|a_{1,1}|+|a_{2,1}|\leqslant\frac{4\tilde{C}\varepsilon}{c}\leqslant\frac{1}{100}, \quad|\epsilon_{1,1}|+|\epsilon_{2,1}|\leqslant \frac{8(\tilde{C}\varepsilon)^2}{c}\leqslant\varepsilon.
\end{align*}
Assume $a_{1,n}$ and $a_{2,n}$ satisfy
	\begin{equation} \label{eq of an}
	\left\{
	\begin{aligned}
		&A_1a_{1,n}^2+E_1a_{1,n}a_{2,n}+C_1a_{1,n}+B_2a_{2,n}=D_1+\epsilon_{1,n}, \\		&A_2a_{2,n}^2+E_2a_{1,n}a_{2,n}+C_2a_{2,n}+B_1a_{1,n}=D_2+\epsilon_{2,n},
	\end{aligned}\right.
\end{equation}
and
\begin{align}
	|a_{1,n}|+|a_{2,n}|\leqslant 1.\label{est on an}
\end{align}
Then we could add a correction $\tilde{a}_{1,n}$ and $\tilde{a}_{2,n}$  which is the solution of 
\begin{equation}
	\left\{
	\begin{aligned}
		&(C_1+2A_1a_{1,n}+E_1a_{2,n})\tilde{a}_{1,n}+(B_2+E_1a_{1,n})\tilde{a}_{2,n}=-\epsilon_{1,n}, \\		&(B_1+E_2a_{2,n})\tilde{a}_{1,n}+(C_2+2A_2a_{2,n}+E_2a_{1,n})\tilde{a}_{2,n}=-\epsilon_{2,n}.
	\end{aligned}\right.
\end{equation}
Then we have
\begin{align*}
	\tilde{a}_{1,n}&=-\frac{\epsilon_{1,n}(C_2+2A_2a_{2,n}+E_2a_{1,n})-\epsilon_{2,n}(B_2+E_1a_{1,n})}{(C_1+2A_1a_{1,n}+E_1a_{2,n})(C_2+2A_2a_{2,n}+E_2a_{1,n})-(B_1+E_2a_{2,n})(B_2+E_1a_{1,n})}, \\
	\tilde{a}_{2,n}&=-\frac{\epsilon_{2,n}(C_1+2A_1a_{1,n}+E_1a_{2,n})-\epsilon_{1,n}(B_1+E_2a_{2,n})}{(C_2+2A_2a_{2,n}+E_2a_{1,n})(C_1+2A_1a_{1,n}+E_1a_{2,n})-(B_1+E_2a_{2,n})(B_2+E_1a_{1,n})},
\end{align*}
and
\begin{align*}
	|\tilde{a}_{1,n}|+|\tilde{a}_{2,n}|\leqslant \frac{16c}{c^2}\left(|\epsilon_{1,n}|+|\epsilon_{2,n}|\right)\leqslant \frac{16}{c}\left(|\epsilon_{1,n}|+|\epsilon_{2,n}|\right).
\end{align*}
So the new items $a_{1,n+1}=a_{1,n}+\tilde{a}_{1,n}$ and $a_{2,n+1}=a_{2,n}+\tilde{a}_{2,n}$ satisfy
\begin{equation}
	\left\{
	\begin{aligned}
		&A_1a_{1,n+1}^2+E_1a_{1,n+1}a_{2,n+1}+C_1a_{1,n+1}+B_2a_{2,n+1}=D_1+\epsilon_{1,n+1}, \\		&A_2a_{2,n+1}^2+E_2a_{1,n+1}a_{2,n+1}+C_2a_{2,n+1}+B_1a_{1,n+1}=D_2+\epsilon_{2,n+1},
	\end{aligned}\right.
\end{equation}
where 
\begin{equation}
	\begin{aligned}
		|\epsilon_{1,n+1}|=|A_1\tilde{a}_{1,n}^2+E_1\tilde{a}_{1,n}\tilde{a}_{2,n}|
		&\leqslant\frac{256\tilde{C\varepsilon}}{c^2}\left(|\epsilon_{1,n}|+|\epsilon_{2,n}|\right)^2<\frac{\left(|\epsilon_{1,n}|+|\epsilon_{2,n}|\right)^2}{2}, \\		|\epsilon_{2,n+1}|=|A_2\tilde{a}_{2,n}^2+E_2\tilde{a}_{1,n}\tilde{a}_{2,n}|
		&\leqslant\frac{256\tilde{C\varepsilon}}{c^2}\left(|\epsilon_{1,n}|+|\epsilon_{2,n}|\right)^2<\frac{\left(|\epsilon_{1,n}|+|\epsilon_{2,n}|\right)^2}{2}.
	\end{aligned}
\end{equation}
By repeating this process, we can get a sequence $(a_{1,n}=a_{1,1}+\sum_{i=1}^{n-1}\tilde{a}_{1,i},a_{2,n}=a_{2,1}+\sum_{i=1}^{n-1}\tilde{a}_{2,i}, \epsilon_{1,n}, \epsilon_{2,n})$ which satisfies the following estimates
\begin{align*}
&|\epsilon_{1,n}|+|\epsilon_{2,n}|< \varepsilon^{2^{n-1}}, \quad
|a_{1,n}|+|a_{2,n}|\leqslant \frac{4\tilde{C}\varepsilon}{c}+\frac{16}{c}\sum_{i=2}^{
\infty} \varepsilon^{2^{i-1}}\leqslant C(\tilde{C},c)\varepsilon\leqslant 1.
\end{align*}
Let $a_1=\lim_{n\rightarrow\infty}a_{1,n},a_2=\lim_{n\rightarrow\infty}a_{2,n},$ we complete the proof.
\end{proof}

\noindent{\bf Conflict of interest.} The authors declare that they have no conflict of interest.\ \\

\noindent{\bf Data availability statement.} Data sharing is not applicable to this article as no datasets were generated or analysed during the current study.

\end{document}